\documentclass{amsart}

\usepackage[english]{babel}
\useshorthands{"}
\addto\extrasenglish{\languageshorthands{ngerman}}
\usepackage{amsmath}
\allowdisplaybreaks
\usepackage{amsfonts}
\usepackage[toc,page]{appendix}
\usepackage[latin1]{inputenc}
\usepackage[T1]{fontenc}
\usepackage{amssymb}
\usepackage{gensymb,MnSymbol}
\usepackage{algorithm}
\usepackage{algpseudocode} 
\usepackage{color}
\usepackage{capt-of}
\usepackage[skip=2pt,font=footnotesize]{caption}
\usepackage{xfrac}
\usepackage{hyperref}

\usepackage{pgfplots}
\pgfplotsset{compat=1.12}
\usepackage{paralist}
\usepackage{subfigure}

\usepackage{cite}
\usepackage{amsthm}
\usepackage{latexsym}
\usepackage{graphicx}

\usepackage{bbm}
\usepackage{url}
\usepackage{longtable}

\newcommand\oneD{{1\text{D}}}
\newcommand\twoD{{2\text{D}}}

\makeatletter
\DeclareRobustCommand*{\bbl@ap}[1]{\textormath{\textsuperscript{#1}}{^{\mathrm{#1}}}}%
\DeclareRobustCommand*{\bbl@ped}[1]{\textormath{$_{\mbox{\fontsize\sf@size\z@ \selectfont#1}}$}{_\mathrm{#1}}}%
\let\ap\bbl@ap
\let\ped\bbl@ped
\makeatother

\hbadness=10000
\vbadness=10000

\newtheorem{theorem}{Theorem}[section]
\newtheorem{definition}[theorem]{Definition}
\newtheorem{proposition}[theorem]{Proposition}
\newtheorem{lemma}[theorem]{Lemma}
\newtheorem{assumption}[theorem]{Assumption}
\newtheorem{remark}[theorem]{Remark}
\newtheorem{corollary}[theorem]{Corollary}
\newtheorem{example}[theorem]{Example}

\newcommand{\supp}{\operatorname{supp}}

\newcommand{\dom}{\operatorname{dom}}
\newcommand{\ran}{\operatorname{ran}}
 
\newcommand{\spanlin}{\operatorname{span}} 
\newcommand{\clos}{\operatorname{clos}}  
\newcommand{\la}{\langle} 
\newcommand{\ra}{\rangle}
\newcommand{\R}{\mathbb{R}} 
\newcommand{\N}{\mathbb{N}}

\newcommand{\cL}{{\mathcal{L}}}

\newcommand{\cX}{{\mathcal{X}}}
\newcommand{\cXd}{{\mathcal{X}^{\delta}}}
\newcommand{\bcXd}{{\bar{\mathcal{X}}^{\delta}}}
\newcommand{\cY}{{\mathcal{Y}}}
\newcommand{\cYd}{{\mathcal{Y}^{\delta}}}

\newcommand{\cP}{{\mathcal{P}}}

\numberwithin{equation}{section}


\title[(Parametrized) First Order Transport Equations]{(Parametrized) First Order Transport Equations: Realization of Optimally Stable Petrov-Galerkin Methods}
\author{Julia Brunken}
\address{University of M{\"u}nster, Applied Mathematics, 
	Einsteinstr.\ 62, 48149 M{\"u}nster (Germany), {julia.brunken@uni-muenster.de}}
\author{Kathrin Smetana}
\address{University of Twente, Faculty of Electrical Engineering, Mathematics \& Computer Science,
	Zilverling,  P.O. Box 217, 7500 AE Enschede (The Netherlands), {k.smetana@utwente.nl}}
\author{Karsten Urban}
\address{Ulm University, Institute for Numerical Mathematics, Helmholtzstr.\ 20, 
		89081 Ulm (Germany), {karsten.urban@uni-ulm.de}}
%
\date{\today}
\thanks{The work of Julia Brunken is supported by the German Federal Ministry of Education and Research under Grant no.\ BMBF 05M2016 - GlioMaTh.}
\subjclass[2010]{
	65N30,
	65J10,
	65M12,
	65Mxx
	}

\keywords{Linear transport equation, inf-sup stability, reduced basis methods}

\begin{document}

\begin{abstract}
We consider ultraweak variational formulations for (parametrized) linear first order transport equations in time and/or space. Computationally feasible pairs of optimally stable trial and test spaces are presented, starting with a suitable test space and defining an optimal trial space by the application of the adjoint operator. As a result, the inf-sup constant is one in the continuous as well as in the discrete case and the computational realization is therefore easy. In particular, regarding the latter, we avoid a stabilization loop within the greedy algorithm when constructing reduced models within the framework of reduced basis methods. 
Several numerical experiments demonstrate the good performance of the new method.
\end{abstract}

\maketitle

\section{Introduction}\label{Sec:1}
Transport phenomena are omnipresent in several areas of science and technology such as cell movement \cite{Hillen2006} for instance in brain tumors \cite{EHKS2015}. 
Even though there is a huge literature on the corresponding partial differential equations (PDEs), there is still significant need for research, in particular concerning efficient and robust numerical solvers. Even less results are known for parametrized PDEs when one either wishes to compute the solution for many different parameters (many-query) or in real-time. Of course, in the simplest case of first order linear transport problems with constant coefficients, there are even closed formulas for the solution using the method of characteristics. This, however, already changes when allowing for variable advection and/or reaction coefficients,  which we encounter for instance in mesoscopic formulations of Glioma spreading models \cite{EHKS2015}. 
Such a model contains patient-specific data as parameters. To use this problem for individual cancer treatment planning it has to be solved numerically reasonably fast for given parameter values. 
This is the background why in this paper, as a first step, we are concerned with the simplified model problem of (parametrized) time-dependent linear first-order transport:
\begin{equation}\label{eq:TE}
\dot{u}_{\mu}(t,x) + \vec{b}_{\mu}(t,x)\cdot \nabla u_{\mu}(t,x) + c_{\mu}(t,x)\, u_{\mu}(t,x) = f_{\mu}(t,x),
\end{equation}
for all parameters $\mu$ in a compact set $\mathcal{P} \subset \mathbb{R}^{p}$, for all times $t\in (0,T)$ ($T>0$ being some final time) and all $x\in D\subset \R^d$ accompanied with appropriate initial and boundary conditions. 

It is well-known that the above point-wise formulation of \eqref{eq:TE} does not make sense for several realistic cases of coefficients, initial and/or boundary conditions, the geometry of $D$, etc. In fact, often it is known that continuous solutions of \eqref{eq:TE} do not exist; appropriate variational formulations are a possible way-out.

Recalling d'Alembert's solution formula for the linear transport equation \cite{MR2407757}, it is well-known that the solution ``inherits'' the (lack of) regularity of the initial condition, which means for instance that the solution stays in $L_2(D)$ if the initial condition is only in $L_2(D)$ (and not more). This motivates us to consider an ultraweak space-time formulation with $\cX=L_2(I; L_2(D))\cong L_2(I\times D)$ as trial space. It then remains to determine a test space $\cY$ such that the arising variational problem is well-posed. Besides existence and uniqueness of the solution, the stability is of particular interest for numerical purposes. 
In the optimal case the stability constant is unity, which means that error and residual coincide and at the same time the approximation is the best one from the chosen trial space. This is highly relevant for error estimation in adaptive methods and reduced order models.

Such optimally stable ultraweak variational formulations for first order transport equations have been proposed for instance in \cite{DemGop10,DemGop11,DHSW2012,BTDeGh13}. The optimal relation between trial and test spaces is thus known. For numerical purposes, however, this relation is not easy to deal with. In fact, given a finite-dimensional approximation trial space $\cX^\delta\subset\cX$, where $\delta$ is some discretization parameter such as the mesh size, the numerical construction of the optimal test space $\cY^\delta$ amounts to solving the PDE $\dim(\cX^\delta)$-times, which is in general infeasible. Therefore, in \cite{DemGop10,DemGop11,BTDeGh13,Zetal11,BrDaSt17} a discontinuous Petrov-Galerkin (DPG) approximation with a (possibly suboptimal) broken test space is suggested so that the approximation of the optimal test basis functions reduces to the solution of local problems. In \cite{DHSW2012}, a global approximate test space $\cY^{\delta} \subsetneq \mathcal{Z}^{\delta} \subset \cY$ is constructed by using an appropriate so-called test search space $\mathcal{Z}^{\delta}$ similar to \cite{DemGop11}. Finally, the authors of \cite{DFW16} employ a discontinuous Galerkin approximation in space and a conforming Petrov-Galerkin approximation in time resulting in a suboptimal inf-sup constant, in particular w.r.t.\ time \cite[Lemmata 1 and 3]{DFW16}.

In this article we propose to first choose an appropriate test space $\cY^\delta$ and subsequently compute the corresponding trial space $\cX^\delta$. Doing so, the optimal trial space $\cX^\delta$ arises from the \emph{application of the differential operator} on the basis functions of $\cY^\delta$ rather than by \emph{approximately solving (local) PDEs}. If the test space is chosen for instance as a standard finite element (FE) space, the application of the differential operator is straightforward and by far more efficient than computing approximate test functions. Moreover, the approach is (very) easy to implement. In contrast to the approaches mentioned above we obtain an optimally stable scheme, meaning inf-sup and continuity constants of unity also for variable coefficients. In particular, the inf-sup constant does not depend on $\delta$. We also prove convergence for our scheme as $\delta\to 0$ but do not derive convergence rates in $\delta$. Instead, we investigate the achieved rates numerically, obtaining convergence rates similar to the ansatz proposed in \cite{DHSW2012}. We also believe that our approach relatively easily might be generalizable to more complex problems. Finally, we note that choosing first the test space and constructing subsequently the associated optimal trial space was already suggested in \cite[Theorem 2.10]{BTDeGh13} for the DPG method but not further pursued in the remainder of the respective article. Moreover, the same approach is investigated in parallel for the wave equation in \cite{RB:Wave}.

Generalizing and applying our proposed approach to parametrized PDEs offers additional advantages. To realize the generalization we make use of the reduced basis (RB) method (see for instance \cite{QuMaNe16,HeRoSt16,Haasdonk:RB} and references therein), which is nowadays a well-known and accepted efficient numerical method for solving parametrized PDEs in a many-query and/or realtime context. For instance, we employ a greedy algorithm for the construction of reduced test and trial spaces. Here, by applying the now parameter-dependent operator to the reduced test space in order to construct the then also parameter-dependent reduced trial space we obtain an optimally stable reduced scheme. In contrast, the approaches proposed in \cite{DaPlWe14,ZaNo16} yield a suboptimal inf-sup constant. Moreover, we avoid an additional stabilization loop during the greedy algorithm as proposed in \cite{DaPlWe14} or the construction of a parameter-dependent preconditioner as suggested in \cite{ZaNo16} for the approximation of the optimal test space. Not least because of that, our proposed ansatz allows (especially in the parametric context) for a (very) easy implementation. However, in contrast to \cite{DaPlWe14,ZaNo16}, until now, we were not able to prove the convergence of the greedy algorithm proposed in this paper.  Note also that the RB approximation is no longer a linear combination of snapshots, but a linear combination of parameter-dependent applications of operators. Nevertheless the reproduction of snapshots is maintained. 
We finally note that our approach does not aim at obtaining an approximation that converges faster than the Kolmogorov $n$-width.

The remainder of this paper is organized as follows: In Section \ref{Sec:2} we present an optimally stable ultraweak variational formulation of first order linear transport equations, covering both time-independent and time-dependent operators. Section \ref{Sec:3} is devoted to the finite-dimensional, discrete case where we introduce an optimally stable Petrov-Galerkin method. Parametrized transport problems are considered in Section \ref{Sec:RBM} within the framework of the RB method. We describe the fairly easy com\-pu\-ta\-tion\-al realization of the new approach in Section \ref{Sec:5} and report on several numerical experiments in Section \ref{Sec:6}. Finally, we end with some conclusions in Section \ref{Sec:7}.

\section{An optimally stable ultraweak (space-time) formulation}\label{Sec:2}

In this section we present an ideally conditioned variational framework for  linear first order transport equations using results from \cite{DHSW2012} and \cite{AzePou96, Azerad1996}.
To that end, let $\Omega\subset\R^n$, $n\ge 1$, be a bounded polyhedral domain  with Lipschitz boundary, where we note that $\Omega$ may also be a space-time domain, as will be shown in Example \ref{subsec:example} at the end of this section. Moreover, $\vec{n}$ shall denote the outward normal of $\Gamma:=\partial\Omega$. Next, we introduce the advection field $\vec{b}(\cdot) \in C^{1}(\bar{\Omega})^{n}$ and the reaction coefficient $c(\cdot) \in C^{0}(\bar{\Omega})$, noting that for some statements the regularity assumption on $\vec{b}(\cdot)$ may be relaxed. We assume throughout this paper that
\begin{equation*}
c(z) - \tfrac{1}{2} \nabla \cdot \vec{b}(z) \geq 0 \quad  \text{for } z\in \Omega \enspace\text{almost everywhere}.
\end{equation*}
Then, we consider the first order transport equation 
\begin{equation}\label{eq:stationary}
	\begin{aligned}
	B_\circ u(z) 
		&:= \vec{b}(z)\cdot \nabla u(z) +c(z)u(z) = f_\circ(z),
		&&z\in\Omega,
		\\
	u(z)
		&=g(z),
		&& z\in\Gamma_{-}\equiv\Gamma_{\text{inflow}},
	\end{aligned}
\end{equation}
where $f_\circ \in C^{0}(\bar{\Omega})$, $g \in C^{0}(\overline{\Gamma_{-}})$, and $\Gamma_{\pm} := \{ z\in\partial \Omega:\, \vec{b}(z)\cdot \vec{n}(z) \gtrless 0\}$.  

For functions $v,w \in C^{0}(\bar{\Omega}) \cap C^{1}(\Omega)$ we obtain
\begin{equation*}
(B_\circ v, w)_{L_{2}(\Omega)} = (v, B_{\circ}^{*}w)_{L_{2}(\Omega)} + \int_{\Gamma_{-}} vw (\vec{b} \cdot \vec{n})\, ds + \int_{\Gamma_{+}} vw (\vec{b} \cdot \vec{n})\, ds, 
\end{equation*}
where $B_{\circ}^{*}w = - \vec{b} \cdot \nabla w + w(c - \nabla \cdot \vec{b})$ denotes the formal adjoint of $B_{\circ}$.\footnote{Considering \eqref{eq:stationary} with $g(z)\equiv 0$ and thus homogeneous Dirichlet boundary conditions we define  the formal adjoint $B_{\circ}^*$ of $B_{\circ}$ by 
$(B_{\circ} v,w)_{L_2(\Omega)} = (v, B_{\circ}^*w)_{L_2(\Omega)}$
	for all $v,w\in C^\infty_0(\Omega)$.} 
To account for the non-homogeneous boundary conditions we introduce as in \cite{DHSW2012} the spaces $C^{1}_{\Gamma_{\pm}}(\Omega) := \{ v \in C^{0}(\bar{\Omega}) \cap C^{1}(\Omega) \, : \, v|_{\Gamma_{\pm}} = 0 \}$ and obtain
\begin{equation*}
(B_{\circ} v,w)_{L_2(\Omega)} = (v, B_{\circ}^*w)_{L_2(\Omega)}, \quad v \in C^{1}_{\Gamma_{-}}(\Omega), w \in C^{1}_{\Gamma_{+}}(\Omega).
\end{equation*}
Thus, we may define the domain of $B_{\circ}^{*}$ as $\dom(B_{\circ}^{*}) = C^{1}_{\Gamma_{+}}(\Omega)$. 
For the derivation of a stable variational formulation we require as in \cite{DHSW2012} the following two assumptions:
\begin{assumption}
	\label{lemma:B12}
	We assume that the following conditions hold:
	\begin{compactenum}
		\item[(B1)] There exists a dense subspace $\dom(B_{\circ}^*) \subseteq L_2(\Omega)$ on which $B_{\circ}^*$ is injective.
		\item[(B2)] The \emph{range} $\ran(B_{\circ}^*):= \{  B_{\circ}^*v:\, v\in \dom(B_{\circ}^*)\}$ of $B_\circ^*$ is densely embedded in $L_{2}(\Omega)$.
	\end{compactenum}
\end{assumption}

We now give examples for conditions on the coefficient functions $\vec{b}$ and $c$ such that Assumption \ref{lemma:B12} holds true\footnote{We reuse condition (ii) of the corresponding Remark 2.2 in \cite{DHSW2012}. However, as can be seen from the counterexamples in Appendix \ref{sec:counterexamples}, Remark 2.2(i) in \cite{DHSW2012} is in general not sufficient for well-posedness. Therefore, we develop an alternative condition based on \cite{Azerad1996, AzePou96}.}.

\begin{proposition} \label{Prop:Poincare_B1_B2}
Let one of the following two conditions hold:
\begin{compactenum}
		\item[(i)] The flow associated with $\vec{b}$ is $\Omega$-filling, meaning that its trajectories starting from the inflow boundary do fill $\bar \Omega$ except perhaps for a set of measure zero in a finite bounded time $T$ (see \cite{AzePou96, Azerad1996})\footnote{For a more formal definition see Definition \ref{def:omega_filling_xi}.}. 
		\item[(ii)] There exists $\kappa > 0$ with $c - \frac{1}{2} \nabla \cdot \vec{b} \geq \kappa$ in $\Omega$ (see \cite[Remark 2.2(ii)]{DHSW2012}).
\end{compactenum}
Then, the operator $B^*_{\circ}$ satisfies (B1) and (B2). Moreover, we have the \emph{curved Poincar\'{e} inequality}
\begin{equation} \label{eq:curved_Poincare}
\|v\|_{L_2(\Omega)} \leq C \|B^*_{\circ} v\|_{L_2(\Omega)}, \quad v \in C^1_{\Gamma_+}(\Omega).
\end{equation}
In case of condition (i) the constant is $C = 2T$, in case (ii) $C = \frac{1}{\kappa}$.
\end{proposition}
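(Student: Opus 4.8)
The plan is to prove the curved Poincaré inequality \eqref{eq:curved_Poincare} first, since both (B1) and (B2) follow from it without much extra work. The density of $\dom(B_\circ^*) = C^1_{\Gamma_+}(\Omega)$ in $L_2(\Omega)$ is clear because $C^1_{\Gamma_+}(\Omega)\supset C^\infty_0(\Omega)$. Once \eqref{eq:curved_Poincare} is available, (B1) is immediate: if $B_\circ^* v = 0$, then $\|v\|_{L_2(\Omega)}\le C\,\|B_\circ^* v\|_{L_2(\Omega)} = 0$, so $v = 0$ and $B_\circ^*$ is injective on its dense domain.

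The core of the argument is the energy identity obtained by testing with $v$ itself. For $v\in C^1_{\Gamma_+}(\Omega)$, integration by parts together with $\vec{b}\cdot\nabla(v^2) = 2v\,\vec{b}\cdot\nabla v$ and $v|_{\Gamma_+}=0$ yields
\begin{equation*}
(B_\circ^* v, v)_{L_2(\Omega)} = \int_\Omega \big(c - \tfrac12\nabla\cdot\vec{b}\big)\,v^2\,dz - \tfrac12\int_{\Gamma_-} v^2\,(\vec{b}\cdot\vec{n})\,ds,
\end{equation*}
where both terms are nonnegative by the standing assumption $c-\tfrac12\nabla\cdot\vec{b}\ge 0$ and by $\vec{b}\cdot\vec{n}<0$ on $\Gamma_-$. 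Under condition (ii) the first integrand is bounded below by $\kappa v^2$, so $(B_\circ^* v,v)_{L_2(\Omega)}\ge\kappa\|v\|_{L_2(\Omega)}^2$, and Cauchy--Schwarz gives $\kappa\|v\|_{L_2(\Omega)}\le\|B_\circ^* v\|_{L_2(\Omega)}$, i.e.\ \eqref{eq:curved_Poincare} with $C = 1/\kappa$.

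For condition (i) I would instead use the time-like weight $\xi$ furnished by the $\Omega$-filling property (Definition \ref{def:omega_filling_xi}), which I expect to satisfy $\vec{b}\cdot\nabla\xi\ge 1$ in $\Omega$, $0\le\xi\le T$, and $\xi|_{\Gamma_-}=0$. Writing $g := B_\circ^* v$, so that $\vec{b}\cdot\nabla v = (c-\nabla\cdot\vec{b})v - g$, and using $\xi$ as a multiplier followed by integration by parts,
\begin{align*}
\|v\|_{L_2(\Omega)}^2
&\le \int_\Omega (\vec{b}\cdot\nabla\xi)\,v^2\,dz
= \int_{\Gamma} \xi\,v^2\,(\vec{b}\cdot\vec{n})\,ds
- \int_\Omega \xi\big((\nabla\cdot\vec{b})\,v^2 + \vec{b}\cdot\nabla(v^2)\big)\,dz \\
&= -2\int_\Omega \xi\big(c-\tfrac12\nabla\cdot\vec{b}\big)v^2\,dz
+ 2\int_\Omega \xi\,v\,g\,dz
\le 2T\,\|v\|_{L_2(\Omega)}\,\|g\|_{L_2(\Omega)},
\end{align*}
where the boundary term vanishes since $v|_{\Gamma_+}=0$ and $\xi|_{\Gamma_-}=0$, the first volume term is nonpositive because $\xi\ge 0$ and $c-\tfrac12\nabla\cdot\vec{b}\ge 0$, and the final step uses $0\le\xi\le T$ together with Cauchy--Schwarz. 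Dividing by $\|v\|_{L_2(\Omega)}$ gives \eqref{eq:curved_Poincare} with $C = 2T$.

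Finally, for (B2) I would argue by duality. If $\phi\in L_2(\Omega)$ is orthogonal to $\ran(B_\circ^*)$, then $(B_\circ^* w,\phi)_{L_2(\Omega)}=0$ for all $w\in C^1_{\Gamma_+}(\Omega)$; testing first with $w\in C^\infty_0(\Omega)$ shows that $\phi$ solves $B_\circ\phi = \vec{b}\cdot\nabla\phi + c\phi = 0$ in the distributional sense, and testing with general $w$ forces the homogeneous inflow condition $\phi|_{\Gamma_-}=0$. An energy identity for $B_\circ$ analogous to the one above (with the roles of $\Gamma_-$ and $\Gamma_+$ interchanged, using in case (i) the reversed weight $T-\xi$) shows that this forward problem is injective, whence $\phi=0$ and $\ran(B_\circ^*)$ is dense. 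I expect the main obstacles to be twofold: first, constructing the weight $\xi$ with the required regularity and the sign $\vec{b}\cdot\nabla\xi\ge 1$ out of the $\Omega$-filling hypothesis, which is exactly the technical content drawn from \cite{AzePou96,Azerad1996}; and second, making the duality argument for (B2) fully rigorous, since the a priori estimate derived for $C^1$ functions must be transferred to the merely $L_2$ weak solution $\phi$ by a density or closure argument.
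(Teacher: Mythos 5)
Your proposal is correct and follows essentially the same route as the paper: the energy identity $(B_\circ^* v,v)_{L_2(\Omega)}=\int_\Omega (c-\tfrac12\nabla\cdot\vec b)v^2\,dz-\tfrac12\int_{\Gamma_-}v^2\,\vec b\cdot\vec n\,ds$ for case (ii), a weighted multiplier built from the $\Omega$-filling flow for case (i), and the orthogonality-plus-trace duality argument for (B2) using the Poincar\'e inequality for the reversed field. The one piece you defer --- the existence of the weight $\xi$ with $\vec b\cdot\nabla\xi\ge 1$, $0\le\xi\le T$, $\xi|_{\Gamma_-}=0$ --- is exactly what the paper supplies in its preliminary lemma by the method of characteristics (setting the weight equal to the travel time along the trajectory from $\Gamma_-$), so your identification of that as the technical content from the cited references is accurate.
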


\begin{proof}
See Appendix \ref{Proof:Poincare_B1_B2}.
\end{proof}

The following proposition gives a sufficient condition for $\vec{b}$ to have an $\Omega$-filling flow:

\begin{proposition}[{$\kern-3.5pt$\cite[Prop.\ 7]{Azerad1996}}] \label{Prop:Condition_Omega_Filling}
If $\vec{b} \in C^1(\bar\Omega)^n$ is bounded as well as its gradient in a neighborhood $V$ of $\bar\Omega$, if there are a unit vector $\vec{k}$, a number $\alpha > 0$ such that
\begin{equation}
\vec{b}(x) \cdot \vec{k} \geq \alpha \quad \forall x \in \bar \Omega,
\end{equation}
and if $\Omega$ is bounded in the $\vec{k}$ direction then the flow is $\Omega$-filling. 
\end{proposition}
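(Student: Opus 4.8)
The plan is to analyze the characteristic flow of $\vec{b}$, i.e.\ the solutions of the autonomous ODE $\dot{X}(t)=\vec{b}(X(t))$, and to show that every point of $\Omega$, up to a negligible set, lies on a characteristic that enters through the inflow boundary within a uniformly bounded time. First I would invoke the Cauchy--Lipschitz theorem: since $\vec{b}$ and its gradient are bounded on the neighborhood $V$ of $\bar\Omega$, the field is Lipschitz there, so through each $z\in\bar\Omega$ there passes a unique integral curve $t\mapsto X(t;z)$, defined and $C^1$ as long as it remains in $V$ and depending $C^1$-smoothly on the initial datum $z$. The decisive structural observation is that the scalar $\phi(t):=\vec{k}\cdot X(t;z)$ satisfies $\phi'(t)=\vec{k}\cdot\vec{b}(X(t;z))\geq\alpha>0$ whenever $X(t;z)\in\bar\Omega$. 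Hence $\phi$ grows at least linearly, and since $\Omega$ is bounded in the $\vec{k}$-direction, with width $L:=\sup_{x,y\in\bar\Omega}\vec{k}\cdot(x-y)<\infty$, no characteristic can remain in $\bar\Omega$ for a time interval longer than $T:=L/\alpha$, which fixes the finite filling time.

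Next I would run the characteristics backward. Fix $z\in\Omega$ and consider $t\mapsto X(t;z)$ for $t\leq 0$. By the transit-time bound there is a smallest $\tau\in(0,T]$ with $y:=X(-\tau;z)\in\partial\Omega$, while $X(t;z)\in\Omega$ for all $t\in(-\tau,0]$. Because the forward characteristic issuing from $y$ immediately re-enters $\Omega$, the velocity $\vec{b}(y)$ cannot point strictly outward, so $\vec{b}(y)\cdot\vec{n}(y)\leq 0$, i.e.\ $y\in\overline{\Gamma_-}$. Thus $z=X(\tau;y)$ is reached from the inflow boundary in time $\tau\leq T$, and every interior point whose first backward exit is a genuine inflow point, $\vec{b}(y)\cdot\vec{n}(y)<0$, is covered by a trajectory starting on $\Gamma_-$.

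It remains to discard the exceptional points, and this is where I expect the \emph{main difficulty}. The points $z$ not covered by the previous step are exactly those whose backward characteristic first meets $\partial\Omega$ either tangentially, on the characteristic set $\Gamma_0:=\{w\in\partial\Omega:\vec{b}(w)\cdot\vec{n}(w)=0\}$, or at a non-smooth part of the polyhedral, Lipschitz boundary. Both $\Gamma_0$ and the edges and corners of $\Gamma$ form a subset $N\subset\partial\Omega$ of vanishing $(n-1)$-dimensional surface measure; the task is to transfer this to $n$-dimensional Lebesgue measure in $\Omega$. The plan is to write the exceptional set as the image of $N\times[0,T]$ under the flow map $(y,s)\mapsto X(s;y)$, which by the $C^1$-dependence on initial data together with a Gronwall estimate, using the bounded gradient of $\vec{b}$ on $V$, is locally Lipschitz; since $N\times[0,T]$ is $n$-dimensional null and a locally Lipschitz map sends null sets to null sets, the exceptional set is Lebesgue-null in $\R^n$. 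Combining this with the backward-tracing argument shows that the characteristics emanating from $\Gamma_-$ fill $\bar\Omega$ up to a set of measure zero within the finite time $T$, which is exactly the definition of an $\Omega$-filling flow. The hard part is thus not the dynamics but the measure-theoretic bookkeeping at the boundary: making the Lipschitz-image-of-a-null-set argument rigorous in the presence of corners and of the possibly large tangency set $\Gamma_0$, and checking that the first-exit map is well enough behaved for it to apply.
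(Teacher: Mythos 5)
A preliminary remark on the comparison you were asked for: the paper itself gives \emph{no} proof of this proposition --- it is imported verbatim from Az\'erad's thesis as \cite[Prop.~7]{Azerad1996} --- so there is no in-paper argument to measure yours against, and the assessment below is of your proposal on its own terms. Your dynamical skeleton is correct and is surely the intended one: the Cauchy--Lipschitz setup on the neighborhood $V$, the uniform transit-time bound $T=L/\alpha$ obtained from $\tfrac{d}{dt}\bigl(\vec{k}\cdot X(t;z)\bigr)\ge\alpha$, the backward tracing to a first exit point $y\in\partial\Omega$ within time $T$, and the observation that $\vec{b}(y)\cdot\vec{n}(y)\le 0$ at smooth exit points are all sound.

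The genuine gap sits exactly where you flag the main difficulty. You assert that the tangency set $\Gamma_0=\{w\in\partial\Omega:\vec{b}(w)\cdot\vec{n}(w)=0\}$ has vanishing $(n-1)$-dimensional surface measure. This is false under the stated hypotheses: take $\Omega=(0,1)^2$, $\vec{b}\equiv(1,0)$, $\vec{k}=(1,0)$, $\alpha=1$; then $\Gamma_0$ contains the entire top and bottom edges. Hence $N\times[0,T]$ may have positive $n$-dimensional measure, and the principle ``a locally Lipschitz map sends null sets to null sets'' simply does not apply to it; as stated, your argument cannot exclude that the uncovered set has positive measure. The conclusion you need --- that the flow image $\xi\bigl([0,T],\Gamma_0\bigr)$ is Lebesgue-null --- is nevertheless true, but for a different reason: if $y$ lies in the relative interior of a face $F$ and $\vec{b}(y)\cdot\vec{n}(y)=0$, then $\vec{b}(y)\in T_yF$, and since $\partial_s\xi(s,y)=\vec{b}(\xi(s,y))=D_y\xi(s,y)\,\vec{b}(y)$, the differential of the $n$-dimensional map $(y,s)\mapsto\xi(s,y)$ on $F\times[0,T]$ has range $D_y\xi(s,y)\bigl(T_yF+\R\,\vec{b}(y)\bigr)=D_y\xi(s,y)\bigl(T_yF\bigr)$, of dimension at most $n-1$. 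Its Jacobian therefore vanishes along $(\Gamma_0\cap F)\times[0,T]$, and the area formula for Lipschitz maps (the equidimensional case of Sard's theorem) yields $\mathcal{L}^n\bigl(\xi([0,T],\Gamma_0\cap F)\bigr)=0$. Your Lipschitz-image-of-a-null-set device is the right tool only for the genuinely $(n-1)$-null remainder of $N$, namely the edges and corners of the polyhedron. With the tangency set handled by this degenerate-Jacobian argument instead, the proof closes.
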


We may now define as in \cite{DHSW2012}
\begin{equation*}
\|v\|_* := \|B_{\circ}^* v\|_{L_{2}(\Omega)}
\end{equation*}
and note that due to (B1) $\| \cdot \|_{*}$ is a norm on $\dom(B_{\circ}^*)$. 
With this framework at hand, we can define as in \cite{DHSW2012} the test space by
\[
	\cY := \clos_{\|\cdot\|_*} \{ \dom(B_{\circ}^*)\},
\]
which is a Hilbert space with inner product 
$(v,w)_\cY := (B^*v, B^*w)_{L_{2}(\Omega)}$ and induced norm $\| v\|_\cY := \|v\|_*,\ v,w\in\cY$.
Here, $B^* : \cY \to L_{2}(\Omega)$ denotes the continuous extension of $B_{\circ}^*$ from $\dom(B_{\circ}^*)$ to $\cY$. Then, we can define $B: L_{2}(\Omega)\to\cY'$ again by duality, i.e., $B := (B^*)^*$.  
The variational formulation of \eqref{eq:stationary} may then be based upon the bilinear form
\begin{equation}\label{eq:blf}
	b: L_{2}(\Omega)\times\cY\to\R,\qquad
	b(v,w) := ( v, B^* w )_{L_{2}(\Omega)} = \int_{\Omega} v ( -\vec{b} \cdot \nabla w + w(c - \nabla \cdot \vec{b})) \, dx.
\end{equation}
To incorporate the boundary conditions, we introduce as in \cite{DHSW2012} the weighted $L_{2}$-space $L_{2}(\Gamma_{-},|\vec{b} \cdot \vec{n}|)$ with norm $\|w\|_{L_{2}(\Gamma_{-},|\vec{b} \cdot \vec{n}|)} := (\int_{\Gamma_{-}} |w|^{2} |\vec{b} \cdot \vec{n}| \, ds )^{1/2}$ and show that functions in $\cY$ have a trace in $L_{2}(\Gamma_{-},|\vec{b} \cdot \vec{n}|)$:\footnote{Note, that due to a wrong estimate the constant given in the corresponding result \cite[Prop.\ 2.3]{DHSW2012} is generally not true. We therefore give a modified proof using \eqref{eq:curved_Poincare} for the estimate in question.}

\begin{proposition} \label{Prop:Trace}
Assume that one of the two conditions of Proposition \ref{Prop:Poincare_B1_B2} holds. Then, there exists a linear continuous mapping 
\begin{equation*}
\gamma_- : \mathcal{Y} \to L_2(\Gamma_-, |\vec{b} \cdot \vec{n}|),
\end{equation*}
such that
\begin{equation}
\|\gamma_-(v)\|_{L_2(\Gamma_-, |\vec{b}\cdot\vec{n}|)} \leq C_{tr} \|v\|_\mathcal{Y}, \quad v \in \mathcal{Y}.
\end{equation}
The constant is $C_{tr}= \sqrt{4T}$, or $C_{tr}=\sqrt{2\kappa^{-1}}$, respectively.
\end{proposition}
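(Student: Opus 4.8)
The plan is to prove the estimate first on the dense subspace $\dom(B_\circ^*)=C^1_{\Gamma_+}(\Omega)$, where the pointwise restriction $\gamma_-(v):=v|_{\Gamma_-}$ is classically meaningful, and then to extend the resulting bounded operator to all of $\cY$ by density. Since the weighted space $L_2(\Gamma_-,|\vec{b}\cdot\vec{n}|)$ is complete and $\dom(B_\circ^*)$ is dense in $\cY$ with respect to $\|\cdot\|_\cY=\|\cdot\|_*$, the bounded linear extension theorem then yields a unique continuous $\gamma_-$ obeying the same bound. Hence everything reduces to the inequality for smooth $v$.

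The key computation is a Green-type identity obtained by testing $B_\circ^*v$ against $v$ itself. Writing $-(\vec{b}\cdot\nabla v)\,v=-\tfrac12\,\vec{b}\cdot\nabla(v^2)$ and applying the divergence theorem on the Lipschitz domain $\Omega$, I expect to obtain
\[
\int_\Omega (B_\circ^* v)\,v\,dx = -\tfrac12\int_{\Gamma} v^2\,(\vec{b}\cdot\vec{n})\,ds + \int_\Omega v^2\big(c-\tfrac12\nabla\cdot\vec{b}\big)\,dx.
\]
Because $v$ vanishes on $\Gamma_+$ and the weight $\vec{b}\cdot\vec{n}$ vanishes on the remaining part of $\Gamma$, the boundary term reduces to an integral over $\Gamma_-$, where $-(\vec{b}\cdot\vec{n})=|\vec{b}\cdot\vec{n}|$, so it equals $\tfrac12\|v\|_{L_2(\Gamma_-,|\vec{b}\cdot\vec{n}|)}^2$. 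Discarding the nonnegative volume integral by the standing assumption $c-\tfrac12\nabla\cdot\vec{b}\ge 0$ leaves
\[
\tfrac12\,\|v\|_{L_2(\Gamma_-,|\vec{b}\cdot\vec{n}|)}^2 \;\le\; \int_\Omega (B_\circ^* v)\,v\,dx .
\]

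From here, Cauchy--Schwarz together with the curved Poincar\'e inequality \eqref{eq:curved_Poincare} gives
\[
\int_\Omega (B_\circ^* v)\,v\,dx \le \|B_\circ^* v\|_{L_2(\Omega)}\,\|v\|_{L_2(\Omega)} \le C\,\|B_\circ^* v\|_{L_2(\Omega)}^2 = C\,\|v\|_\cY^2 ,
\]
whence $\|v\|_{L_2(\Gamma_-,|\vec{b}\cdot\vec{n}|)}\le\sqrt{2C}\,\|v\|_\cY$. Inserting $C=2T$ and $C=1/\kappa$ reproduces exactly $C_{tr}=\sqrt{4T}$ and $C_{tr}=\sqrt{2\kappa^{-1}}$; this is precisely where the corrected constant announced in the footnote originates, namely from bounding $\|v\|_{L_2(\Omega)}$ directly through the curved Poincar\'e constant instead of a cruder intermediate estimate.

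The step I expect to be most delicate is justifying the divergence theorem for $v\in C^0(\bar\Omega)\cap C^1(\Omega)$: such $v$ is only $C^1$ in the open set and merely continuous up to $\Gamma$, so $v^2\vec{b}$ need not belong to $C^1(\bar\Omega)$ and Gauss's theorem does not apply verbatim. I would therefore first establish the identity for $v\in C^1(\bar\Omega)$, where it is immediate on a Lipschitz domain, and then pass to general $v\in C^1_{\Gamma_+}(\Omega)$ by an exhaustion/approximation argument, using the already-derived one-sided bound to keep the $\Gamma_-$ trace under control in the limit. Once the inequality is secured on $\dom(B_\circ^*)$, the density extension described above is routine.
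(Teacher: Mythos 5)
Your argument is correct and follows essentially the same route as the paper's proof: the integration-by-parts identity $(B^*_{\circ}v,v)_{L_2(\Omega)}=\int_{\Omega}v^2(c-\tfrac12\nabla\cdot\vec{b})\,dx-\tfrac12\int_{\Gamma_-}v^2\,\vec{b}\cdot\vec{n}\,ds$, discarding the nonnegative volume term, Cauchy--Schwarz combined with the curved Poincar\'e inequality \eqref{eq:curved_Poincare}, and extension by density, yielding exactly the constants $\sqrt{4T}$ and $\sqrt{2\kappa^{-1}}$. Your closing remark about justifying the divergence theorem for $v\in C^0(\bar\Omega)\cap C^1(\Omega)$ is a legitimate technical point that the paper passes over silently, but it does not change the substance of the argument.
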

\begin{proof}
Integration by parts yields (see also \eqref{eq:est_aadj_v_v})
\begin{equation*}
(B^*_{\circ} v, v)_{L_2(\Omega)} = \int_{\Omega} v^2 (c- \tfrac{1}{2}\nabla \cdot b) dx 
- \tfrac{1}{2} \int_{\Gamma_-} v^2 \vec{b} \cdot \vec{n} ds. 
\end{equation*}
By using the general assumption $c - \frac{1}{2} \nabla \cdot \vec{b} \geq 0$ and $\vec{b} \cdot \vec{n} < 0$ on $\Gamma_-$, we have for $v \in C^1_{\Gamma_+}(\Omega)$
\begin{equation}
\int_{\Gamma_-} v^2 |\vec{b} \cdot \vec{n}|ds \leq 2|(B^*_{\circ}v, v)| \leq 2 \|v\|_{L_2(\Omega)} \|v\|_{\mathcal{Y}} \leq 2 C \|v\|_\mathcal{Y}^2,
\end{equation}
where we have used \eqref{eq:curved_Poincare} in the last estimate. The assertion for $v \in \mathcal{Y}$ follows by density.
\end{proof}

Next, we define for any $f_{\circ} \in L_{2}(\Omega)$ and $g \in L_{2}(\Gamma_{-},|\vec{b} \cdot \vec{n}|)$  a linear form $f \in \mathcal{Y}'$ as 
\begin{equation}\label{eq:rhs}
f(v) := (f_{\circ},v)_{L_{2}(\Omega)} + \int_{\Gamma_{-}} g \gamma_{-}(v)|\vec{b} \cdot \vec{n}| \, ds.  
\end{equation}

Then, we obtain the well-posedness of the variational formulation:

\begin{theorem}[{$\kern-3.5pt$\cite[Thm.\ 2.4]{DHSW2012}}]
	\label{Theo:Prop:DHSW2012}
	Assume that one of the two conditions in Proposition \ref{Prop:Poincare_B1_B2} is valid and $b$ and $f$ are defined as in \eqref{eq:blf} and \eqref{eq:rhs}, respectively. Then, there exists a unique $u\in L_{2}(\Omega)$ such that
	\begin{equation}\label{eq:weak_form}
		b(u,v) = f(v)\quad \forall v\in\cY,
	\end{equation}	
and the stability estimate 
$\| u \|_{L_{2}(\Omega)} \leq \|f \|_{\mathcal{Y}'}$ 
holds.  Moreover, 
	\begin{equation*}
	\sup_{w\in L_{2}(\Omega)} \sup_{v \in \cY} \frac{b(w,v)}{\|w\|_{L_{2}(\Omega)} \|v\|_\cY} 
	= \inf_{w\in L_{2}(\Omega)} \sup_{v \in \cY} \frac{b(w,v)}{\|w\|_{L_{2}(\Omega)} \|v\|_\cY} 
	= 1,
	\end{equation*}
	i.e., inf-sup and continuity constants are unity and, equivalently,
\begin{equation*}
\| B\|_{\cL(L_{2}(\Omega),\mathcal{Y}')} 
	= \|B^{*}\|_{\cL(\mathcal{Y},L_{2}(\Omega))} 
	= \|B^{-1}\|_{\cL(\mathcal{Y}',L_{2}(\Omega))} 
	= \|B^{-*}\|_{\cL(L_{2}(\Omega),\mathcal{Y})} = 1,
\end{equation*}	
where $B^{-*} := (B^*)^{-1} = (B^{-1})^*: L_{2}(\Omega)\to\cY$.
\end{theorem}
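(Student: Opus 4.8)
The plan is to reduce the entire statement to a single structural fact: that the extended operator $B^*:\cY\to L_2(\Omega)$ is an \emph{isometric isomorphism}. By the very definition of the test norm $\|v\|_\cY=\|B^*v\|_{L_2(\Omega)}$, the map $B^*$ is an isometry onto its range, hence automatically injective (consistent with (B1) and the curved Poincar\'e inequality \eqref{eq:curved_Poincare}, which is what makes $\|\cdot\|_*$ a genuine norm), and its range is complete and therefore closed in $L_2(\Omega)$. The one substantial point is surjectivity: since $\ran(B_\circ^*)$ is dense in $L_2(\Omega)$ by (B2) and is contained in the closed subspace $B^*(\cY)\supseteq B^*(\dom(B_\circ^*))=\ran(B_\circ^*)$, a dense closed subspace must be all of $L_2(\Omega)$. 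Thus $B^*$ is onto and $B^{-*}=(B^*)^{-1}$ exists as an isometry $L_2(\Omega)\to\cY$.

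With the isometric isomorphism in hand the two constants are almost immediate. For the continuity constant I would apply Cauchy--Schwarz, $b(w,v)=(w,B^*v)_{L_2(\Omega)}\le\|w\|_{L_2(\Omega)}\|B^*v\|_{L_2(\Omega)}=\|w\|_{L_2(\Omega)}\|v\|_\cY$, which bounds the outer supremum by $1$. For the inf-sup constant I would, for fixed $w\in L_2(\Omega)$, choose the explicit \emph{supremizer} $v:=B^{-*}w\in\cY$, so that $B^*v=w$; then $b(w,v)=\|w\|_{L_2(\Omega)}^2$ while $\|v\|_\cY=\|w\|_{L_2(\Omega)}$, so the quotient equals exactly $1$. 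Hence for every $w$ the inner supremum is precisely $1$, and since the infimum and supremum over $w$ of a function identically equal to $1$ both equal $1$, the inf-sup and continuity constants coincide and equal $1$.

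For existence, uniqueness and stability I would bypass the general BNB machinery and argue directly through the Riesz representation theorem in the Hilbert space $\cY$: given $f\in\cY'$ there is a unique $r_f\in\cY$ with $(r_f,v)_\cY=f(v)$, i.e.\ $(B^*r_f,B^*v)_{L_2(\Omega)}=f(v)$, so that $u:=B^*r_f\in L_2(\Omega)$ satisfies $b(u,v)=(u,B^*v)_{L_2(\Omega)}=(r_f,v)_\cY=f(v)$ for all $v\in\cY$. Uniqueness follows from the inf-sup bound, since $b(u,v)=0$ for all $v$ and the test choice $v=B^{-*}u$ force $\|u\|_{L_2(\Omega)}^2=0$; and the stability estimate is in fact the equality $\|u\|_{L_2(\Omega)}=\|B^*r_f\|_{L_2(\Omega)}=\|r_f\|_\cY=\|f\|_{\cY'}$ delivered by the Riesz isometry.

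Finally, the chain of operator-norm identities is pure Hilbert-space bookkeeping once $B^*$ is an isometric isomorphism: $\|B^*\|=\|B^{-*}\|=1$ because an isometry and its inverse preserve norms, while $\|B\|=\|B^*\|$ and $\|B^{-1}\|=\|(B^{-*})^*\|=\|B^{-*}\|$ because a bounded operator and its Hilbert adjoint share the same norm, using $B=(B^*)^*$ and $B^{-1}=(B^{-*})^*$. The only genuinely nontrivial step is the surjectivity of $B^*$; everything else is a formal consequence of having defined the test norm as $\|B^*\cdot\|_{L_2(\Omega)}$, so the whole argument hinges on (B2) together with the completeness of $\cY$.
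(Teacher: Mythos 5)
Your argument is correct in structure and is essentially the argument behind the cited result that the paper itself defers to: everything reduces to $B^*:\cY\to L_2(\Omega)$ being an isometric isomorphism (isometry by the definition of $\|\cdot\|_\cY$, injectivity from (B1), surjectivity from (B2) plus closedness of the range of an isometry on a complete space), after which continuity, inf-sup, well-posedness via Riesz representation, and the operator-norm chain are all formal. The explicit supremizer $v=B^{-*}w$ and the Riesz-based existence proof are exactly the right mechanisms, and your stability statement is in fact the sharper equality $\|u\|_{L_2(\Omega)}=\|f\|_{\cY'}$.

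The one step you skip is the only place where the hypotheses of Proposition \ref{Prop:Poincare_B1_B2} are actually consumed beyond (B1)--(B2): you take ``given $f\in\cY'$'' as the starting point, but the theorem asserts well-posedness for the \emph{specific} right-hand side \eqref{eq:rhs}, built from $f_\circ\in L_2(\Omega)$ and a boundary datum $g\in L_2(\Gamma_-,|\vec b\cdot\vec n|)$. That this linear form is bounded on $\cY$ is not automatic: the volume term needs the curved Poincar\'e inequality \eqref{eq:curved_Poincare} to control $\|v\|_{L_2(\Omega)}$ by $\|v\|_\cY$, and the boundary term needs the trace estimate of Proposition \ref{Prop:Trace}. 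This is precisely the modification the paper flags (it replaces the erroneous trace constant of the original reference), so your proof should include the one-line verification
\begin{equation*}
|f(v)| \le \|f_\circ\|_{L_2(\Omega)}\,C\,\|v\|_\cY + \|g\|_{L_2(\Gamma_-,|\vec b\cdot\vec n|)}\,C_{tr}\,\|v\|_\cY
\end{equation*}
before invoking Riesz. With that added, the proof is complete and matches the paper's intended route.
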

\begin{proof}
The proof follows the lines of the proof of \cite[Thm.\ 2.4]{DHSW2012} invoking Proposition \ref{Prop:Trace} instead of \cite[Prop.\ 2.3]{DHSW2012}.
\end{proof}

\begin{example}[Time-dependent linear transport equations]\label{subsec:example}
\hspace{-0.5em}The setting described in the beginning of this section includes both time-independent and time-dependent linear first order transport problems: As remarked in \cite{DHSW2012} we can consider time as an additional transport direction in the space-time domain, i.e., $z=(t,x)\in\Omega := (0,T) \times D = I\times D$, $n=1+d$, 
where $D\subset\R^d$ denotes the spatial domain.  Next, we define the space-time transport direction $\vec{b} := (1,\vec{b}_{x})^T \in C^{1}(\overline{I \times D})^{1+d}$, where $\vec{b}_{x}$ denotes the spatial advective field. Moreover, we introduce the space-time gradient operator as $\nabla := (\partial/\partial t, \nabla_{x})^T$, where $\nabla_{x}$ is the gradient on the spatial domain $D$. Accordingly, we set
$
	\Gamma_\pm := \{ (t,x)\in\Gamma:\, \vec{b}(t,x)\cdot \vec{n}(t,x) \gtrless 0\},
$
where $ \vec{n}(t,x)$ is again the outward normal of $\Gamma$. Then, we obtain exactly the form \eqref{eq:TE}, namely
\begin{equation*}
	B_\circ u := \vec{b}\cdot \nabla u + cu = f \quad\text{in } \Omega,
	\qquad
	u=g\quad\text{on }\Gamma_{-}.
\end{equation*}
For the space-time boundary, we have $\Gamma = \overline{\Gamma\ped{in} \cupdot \Gamma\ped{out} \cupdot \Gamma_{D}}$, where $\Gamma\ped{in} := \{ 0\}\times D$, $\Gamma\ped{out} := \{ T\}\times D$, $\Gamma_{D} := I\times \partial D$, along with its corresponding outward normals $\vec{n}\ped{in}:=(-1,0)^T$, $\vec{n}\ped{out}:=(1,0)^T$ and $\vec{n}_{D}:=(0,\vec{n}_{x})^T$, where $\vec{n}_{x}$ denotes the spatial outward normal (of $D$). Hence, $\vec{b}\cdot\vec{n}\ped{in}=-1$, $\vec{b}\cdot\vec{n}\ped{out}=1$, and $\vec{b}\cdot\vec{n}_{D}=\vec{b}_{x}\cdot\vec{n}_{x}$, so that $\Gamma_{-} = \overline{\Gamma\ped{in}\cupdot \Gamma_{D_{-}}}$, where $\Gamma_{D_{\pm}} = I\times\partial D_\pm$ and $\partial D_\pm := \{ x\in\partial D:\, \vec{b}_{x}(x)\cdot \vec{n}_{x}(x) \gtrless 0\}$. We emphasize that also non-homogeneous initial values are thus prescribed in an essential manner. 
Note that for the time-dependent case condition (i) of Proposition \ref{Prop:Poincare_B1_B2} is always fulfilled, which can be seen by taking $\vec{k} = (1, \vec{0})$ in Proposition \ref{Prop:Condition_Omega_Filling}, since $\vec{k} \cdot \vec{b} \equiv 1$ (cf.\ \cite{Azerad1996}). 
As an alternative to this realization of a space-time formulation, one could also treat spatial and temporal variables separately. Such a \emph{strong in time} variational formulation however results in a suboptimal inf-sup constant, for details see Appendix \ref{sec:strong_in_time}.
\end{example}

\section{An optimally stable Petrov-Galerkin method}\label{Sec:3}

In this section we introduce computationally feasible and optimally stable (conforming)  finite dimensional trial and test spaces $\cX^\delta\subset \cX=L_{2}(\Omega)$ and $\cY^\delta\subset\cY$ for the approximation of the solution of \eqref{eq:weak_form}. Here, we denote by $\delta$ a discretization parameter, where $\delta$ equals the mesh size $h$ for spatial problems and $\delta = (\Delta t, h)$ for time-dependent problems in space and time with a time step $\Delta t$\footnote{If we use a tensor product discretization in space, $\delta$ may also take the form $\delta = (h_1,\dots, h_d)$ or $\delta = (\Delta t, h_1, \dots, h_d)$, respectively.}. 
Then, the discrete counterpart of \eqref{eq:weak_form} reads
\begin{equation}\label{eq:disc_form}
	u^\delta\in\cX^\delta:
	\qquad b(u^\delta,v^\delta) = f(v^\delta)\quad \forall v^\delta\in\cY^\delta.
\end{equation}	
This latter equation admits a unique solution $u^\delta\in\cX^\delta$ provided that 
\begin{align}
	\gamma^\delta 
	:= \!\sup_{w^\delta\in\cX^\delta} 
		\sup_{v^\delta \in \cY^\delta} 
		\frac{b(w^\delta,v^\delta)}{\|w^\delta\|_{L_{2}(\Omega)}\, \|v^\delta\|_\cY} 
		\!<\!\infty, \,\,\,\,
		&
	\beta^\delta
	:= \!\inf_{w^\delta\in\cX^\delta} 
		\sup_{v^\delta \in \cY^\delta} 
		\frac{b(w^\delta,v^\delta)}{\|w^\delta\|_{L_{2}(\Omega)}\, \|v^\delta\|_\cY} 
		\!>\!0,
\end{align}
where we additionally require the existence of $\beta$ and $\gamma$ such that
\begin{equation*}
	\gamma^\delta \le \gamma<\infty,
	\quad
	\beta^\delta \ge \beta>0,
	\qquad
	\text{for all } \delta>0.
\end{equation*}
These constants for continuity $\gamma^\delta$ and stability (or inf-sup) $\beta^\delta$ also play a key role for the relation of the \emph{error}
$
	e^\delta := u-u^\delta
$
and the \emph{residual} $r^\delta\in\cY'$ defined as 
\begin{equation*}
	r^\delta(w) := f(w) - b(u^\delta,w) = b(e^\delta, w), \qquad w\in\cY,
\end{equation*}
as can be seen by the standard lines
\begin{align*}
\beta\, \| e^\delta\|_{L_{2}(\Omega)}
\le \sup_{w\in\cY} \frac{b(e^\delta, w)}{\| w\|_\cY}
= \sup_{w\in\cY} \frac{r^\delta(w)}{\| w\|_\cY} = \| r^\delta \|_{\cY'} 
\le \gamma \sup_{w\in\cY} \frac{\|e^\delta\|_{L_{2}(\Omega)}\, \| w\|_\cY}{\| w\|_\cY}
= \gamma\, \| e^\delta\|_{L_{2}(\Omega)}.
\end{align*}
In the optimal case, i.e., $\beta = \gamma =1$, error and residual coincide, i.e., $\| e^\delta\|_{L_{2}(\Omega)} = \| r^\delta \|_{\cY'}$. 
Moreover, we have the following C\'{e}a-type lemma \cite{XZ94}
\begin{equation*}
	\| e^\delta\|_{L_{2}(\Omega)}
	= \| u-u^\delta\|_{L_{2}(\Omega)}
	\le \frac{\gamma}{\beta} \inf_{v^\delta\in\cX^\delta} \| u-v^\delta\|_{L_{2}(\Omega)}
	= \frac{\gamma}{\beta} \,\sigma_{L_{2}(\Omega)} (u; \cX^\delta),
\end{equation*}
where 
$ 
	\sigma_{L_{2}(\Omega)} (u; \cX^\delta) 
	:= \inf_{v^\delta\in\cX^\delta} \| u-v^\delta\|_{L_{2}(\Omega)}
$ 
denotes the error of the best approximation to an element $u\in {L_{2}(\Omega)}$ in $\cX^\delta$ w.r.t.\ the $L_{2}(\Omega)$-norm. 
Since $u^\delta\in\cX^\delta$, it is trivially seen that
$
\sigma_\cX (u; \cX^\delta) 
\le \| u-u^\delta\|_{L_{2}(\Omega)}
= \| e^\delta\|_{L_{2}(\Omega)}$, 
so that in the optimal case $\beta = \gamma =1$ it holds that
\begin{equation}\label{eq:opt}
		\| r^\delta \|_{\cY'}
		= \| e^\delta\|_{L_{2}(\Omega)}
		= \sigma_{L_{2}(\Omega)} (u; \cX^\delta), 
\end{equation}
i.e., the numerical approximation is the best approximation.

\subsection{An optimally conditioned Petrov-Galerkin method}\label{SubSec:NewApproach}

To realize an optimally conditioned and thus optimally stable Petrov-Galerkin method, which is also computationally feasible, we suggest in this paper to first choose a conformal finite-dimensional test space $\cY^\delta\subset\cY$ and then to set 
\begin{equation}\label{eq:Opt2}
	\cX^\delta:= B^*(\cY^\delta) \subset L_{2}(\Omega).  
\end{equation}
For this pair of trial and test spaces we then obtain for every $w^{\delta} \in \mathcal{X}^{\delta}$ that
\begin{equation}\label{eq:inf-sup is one}
\sup_{v^{\delta} \in \mathcal{Y}^{\delta}} \! \frac{b(w^{\delta},v^{\delta})}{\|w^{\delta}\|_{L_{2}(\Omega)} \|v^{\delta}\|_{\mathcal{Y}}} \! = \! \frac{b(w^{\delta},B^{-*}w^{\delta})}{\|w^{\delta}\|_{L_{2}(\Omega)} \|B^{-*}w^{\delta}\|_{\mathcal{Y}}} \! = \! \frac{(w^{\delta},B^{*}\!B^{-*}w^{\delta})_{L_{2}(\Omega)}}{\|w^{\delta}\|_{L_{2}(\Omega)} \|B^{*}\!B^{-*}w^{\delta}\|_{L_{2}(\Omega)}}  \equiv  1.
\end{equation}
Here, we have exploited the fact that for all $w^{\delta} \in \mathcal{X}^{\delta}$ for the supremizer $s^{\delta}_{w^{\delta}} \in \mathcal{Y}^{\delta}$, defined as the solution of
$(s^{\delta}_{w^{\delta}}, v^{\delta})_{\mathcal{Y}} = b(w^{\delta}, v^{\delta})$ 
for all $v ^{\delta} \in \mathcal{Y}^{\delta}$, 
we have $s^{\delta}_{w^{\delta}} = B^{-*}w^{\delta}$ as $B^{*}$ is boundedly invertible. From \eqref{eq:inf-sup is one} we may thus conclude that indeed
\begin{equation}\label{eq:inf sup 1 part 2}
\beta^\delta = \gamma^\delta = 1
\end{equation}
and the proposed method is optimally stable. We note that the same approach is investigated in parallel for the wave equation in \cite{RB:Wave}.

Moreover, we emphasize that the suggested approach is computationally feasible since $B^*$ is a differential operator which can easily be applied -- as long as the test space is formed by `easy' functions such as splines as in the case of finite elements (FE). Additionally, for our choice of test and trial space we may reformulate the discrete problem \eqref{eq:disc_form} as follows: Thanks to the definition of the trial space $\cX^\delta$ in \eqref{eq:Opt2} there exists  for all $v^{\delta} \in \mathcal{X}^{\delta}$ a unique $w^\delta \in \cY^\delta$ such that $v^\delta = B^* w^\delta$. Therefore, the problem \eqref{eq:disc_form} is equivalent to the problem
 \begin{equation} \label{eq:discrete_reformulated}
	w^\delta \in \cY^\delta: \qquad
	a( w^\delta, v^\delta)
	:= (B^* w^\delta, B^* v^\delta)_{L_2(\Omega)} 
	= f(v^\delta) \quad \forall v^\delta \in \cY^\delta,
\end{equation}
which obviously is a symmetric and coercive problem, the normal equations, or a least-squares problem. Thus, problem \eqref{eq:discrete_reformulated} is well-posed and we identify the solution of \eqref{eq:disc_form} as $u^\delta := B^*  w^\delta$.
This reformulation will also be used for the implementation of the framework. From \eqref{eq:discrete_reformulated} we see that for the setup of the linear system for $w^\delta$ the precise knowledge of the basis of $\cXd = B^* \cYd$ is not needed -- only for the pointwise evaluation of $u^\delta$ when e.g.\ visualizing the solution. 
For further details on the computational realization we refer to Section \ref{Sec:5}.

Thanks to \eqref{eq:inf sup 1 part 2} we are moreover in the optimal case described in the beginning of this section and the numerical approximation $u^\delta\in\cX^\delta$ is thus the best approximation of $u\in L_{2}(\Omega)$ for our suggested choice of trial and test space. Hence, we obtain $\|e^\delta\|_{L_2(\Omega)}=\sigma_{L_2}(u,\cX^\delta) = \| r^\delta\|_{\cY'}$. 
Due to \eqref{eq:Opt2} we have that for any $w^\delta\in\cX^\delta$ there exists a unique $v^\delta\in\cY^\delta$ with $B^*v^\delta=w^\delta$. In view of (B1) in Assumption \ref{lemma:B12}, there also exists a unique $v\in\cY$ such that $B ^*v=u$, namely $v^*=B^{-*}u$. 
Therefore, 
\begin{align}
\nonumber	\|e^\delta\|_{L_2(\Omega)} 
	&= \sigma_{L_2}(u,\cX^\delta) 
	= \inf_{w^\delta\in\cX^\delta} \| u-w^\delta\|_{L_2(\Omega)}
	= \inf_{v^\delta\in\cY^\delta} \| B^*v-B^*v^\delta\|_{L_2(\Omega)} \\[-1.5ex]
	&\label{eq:convergence}\\[-1.5ex]
\nonumber	&= \inf_{v^\delta\in\cY^\delta} \| v- v^\delta\|_{\cY} 
	= \sigma_{\cY}(B^{-*}u,\cY^\delta).
\end{align}
We may thus also infer from \eqref{eq:convergence}	the (strong) convergence of the approximation $u^{\delta}$ to $u$ in $L_{2}(\Omega)$ provided that $\inf_{v^\delta\in\cY^\delta} \| v- v^\delta\|_{\cY}$ converges to $0$ as $\delta \rightarrow 0$. Note, that the latter can be ensured by choosing an appropriate test space $\mathcal{Y}^{\delta}$ as say a standard FE space.

We finally remark that in standard FE methods the error analysis is usually done in two steps: (1) relation of the error to the best approximation by a C\'ea-type lemma; (2) proving an asymptotic rate of convergence e.g.\ by using a Cl\'ement-type interpolation operator. As seen above, (1) also holds for our new trial spaces -- in a non-standard norm, however. Regarding the second step (2) there is hope that it might maybe be possible to derive convergence rates via the term $\inf_{v^\delta\in\cY^\delta} \| v- v^\delta\|_{\cY}$ (see \eqref{eq:convergence}) and mapping properties of the operator $B$. This is however beyond the scope of the present paper and the subject of future work. 
Here, we will hence investigate the rate of convergence in numerical experiments in Section \ref{Sec:6}.

\begin{example}[Illustration of trial space] \label{Ex_Illustration_Trial}
We illustrate the trial space $\cX^{\delta}$ as defined in \eqref{eq:Opt2} for a very simple, one-dimensional problem. In detail, we consider $\Omega:=(0,1)$, a constant transport term $b>0$, and a variable reaction coefficient $c \in C^0([0,1])$; that means $B_\circ u(x) := b\, u'(x) + c(x)\, u(x)$, $x\in\Omega$, as well as $u(0)=g$ on $\Gamma_-=\{0\}$. 
We get $B_\circ^* v(x) := -b\, v'(x) + c(x)\, v(x)$. According to our proposed approach, we start by defining a test space $\cY^h$. To this end, let $n_h\in\N$ and $h:=\frac1{n_h}$, $I_i:= [(i-1)h, ih)\cap\bar\Omega$, $i=1,\ldots, n_h$, $I_{0}:=\emptyset$. We use standard piecewise linear FE, i.e.,
\begin{equation*}
	\eta_i (x) := 
		\begin{cases} 	
			\frac{x}{h} + 1-i, &\mbox{if } x\in I_{i-1}, \\ 
			-\frac{x}{h} + 1+i, &\mbox{if } x\in I_{i},\\ 
			\qquad 0, &\mbox{else,} 
		\end{cases} 
\end{equation*}
for $i=1,\ldots , n_h$ and define $\cY^h:=\spanlin\{ \eta_1, \ldots , \eta_{n_h}\}$.  Then, we construct the optimal trial space in the above sense by $\cX^h:=\spanlin\{ \xi_1, \ldots , \xi_{n_h}\}$, where we set
\begin{equation*}
	\xi_i(x) := B^*\eta_i (x) = -b\, \eta_i'(x) + c(x)\, \eta_i(x)
	= 
		\begin{cases} 	
			-\frac{b}{h} + c(x) ( \frac{x}{h} +1-i ), &\mbox{if } x\in I_{i-1}, \\ 
			\frac{b}{h} + c(x) ( -\frac{x}{h} + 1+i ), &\mbox{if } x\in I_{i},\\ 
			\qquad 0, &\mbox{else,} 
		\end{cases} 
\end{equation*}
for $i=1,\ldots , n_h$. Note, that for the special case of constant reaction $c(x)\equiv c$, the functions $\xi_i$ are piecewise linear and discontinuous, see Figure \ref{Fig:1D_Basis_Functions}.

\begin{figure}[tb]
	\pgfplotsset{title style={at={(0.5,0.85)}}}
	\begin{center}
		{\footnotesize 
			\begin{tikzpicture}\hypersetup{hidelinks}
			\begin{axis}[width=6.5cm, height=3.25cm, title={Basis of $\mathcal{Y}^h$}]
			\addplot[red,thick,domain=0:0.25,samples=10,]{-4*x + 1};
			\addplot[blue,thick,domain=0:0.25,samples=10,]{4*x };
			\addplot[blue,thick,domain=0.25:0.5,samples=10,]{-4*x + 2};
			\addplot[green,thick,domain=0.25:0.5,samples=10,]{4*x - 1};
			\addplot[green,thick,domain=0.5:0.75,samples=10,]{-4*x + 3};
			\addplot[orange,thick,domain=0.5:0.75,samples=10,]{4*x - 2};
			\addplot[orange,thick,domain=0.75:1,samples=10,]{-4*x + 4};
			\end{axis}
			\end{tikzpicture}
			\begin{tikzpicture}\hypersetup{hidelinks}
			\begin{axis}[width=6.5cm, height=3.25cm, title={Basis of $\mathcal{X}^h = B^*\mathcal{Y}^h$}]
			\addplot[red,thick,domain=0:0.25,samples=10,]{4 - 8*x + 2};
			\addplot[blue,thick,domain=0:0.25,samples=10,]{-4 + 8*x };
			\draw[blue,thick] (axis cs:0.25,-2) -- (axis cs:0.25,6);
			\addplot[blue,thick,domain=0.25:0.5,samples=10,]{4 -8*x + 4};
			\addplot[green,thick,domain=0.25:0.5,samples=10,]{-4 + 8*x - 2};
			\draw[green,thick] (axis cs:0.5,-2) -- (axis cs:0.5,6);
			\addplot[green,thick,domain=0.5:0.75,samples=10,]{4 -8*x + 6};
			\addplot[orange,thick,domain=0.5:0.75,samples=10,]{-4 + 8*x - 4};
			\draw[orange,thick] (axis cs:0.75,-2) -- (axis cs:0.75,6);
			\addplot[orange,thick,domain=0.75:1,samples=10,]{4 -8*x + 8};
			\end{axis}
			\end{tikzpicture}
		}
		\end{center}
		\caption{Basis functions of $\cY^h$ and $\cX^h$ for $h=\frac{1}{4}$, $b\equiv1$, $c\equiv2$. \label{Fig:1D_Basis_Functions}}
\vspace{-1.5em}
\end{figure}
\end{example}

\subsection{Nonphysical restrictions at the boundary}  \label{Subsec:Problem_tensor}

From a computational perspective it is appealing to use discrete spaces that are tensor products of one-dimensional spaces; for details see Section \ref{Sec:5}.
However, this choice may result in nonphysical restrictions of functions in the trial space on certain parts of the outflow boundary. 

To illustrate this, consider $\Omega = (0,1)^2$ and let $\vec{b} \equiv (b_1,b_2)^T \in \R^2$, $c \in \R$ with $b_1,b_2>0$, such that we have for the inflow boundary $\Gamma_- = ( \{0\} \times (0,1) ) \cup ( (0,1) \times \{0\} )$ and thus for the outflow boundary $\Gamma_+ = ( \{1\} \times (0,1) ) \cup ( (0,1) \times \{1\} )$. Let $\cY^h_{\oneD}$ be a univariate finite dimensional space with $\cY^h_{\oneD} = \spanlin \{\phi_1, \dots, \phi_{n_h}\} \subset H^1_{(1)}(0,1) := \{ v\in H^1(0,1): v(1)=0\}$.
Next, we define the discrete test space on $\Omega = (0,1)^2$ as the tensor product space
\begin{equation*}
\cY^\delta 
:= \cY^h_{\oneD} \otimes \cY^h_{\oneD}
= \spanlin \{ \phi_i\otimes \phi_j:\, 1\le  i,j\le n_h \},
\qquad \delta = (h,h).
\end{equation*}
Then, the optimal trial functions are given for $i,j, = 1, \ldots , n_h$, by 
\begin{equation*}
\psi_{i,j} := B^* (\phi_i \otimes\phi_j)
= -b_1 (\phi_i'\otimes\phi_j) -b_2 (\phi_i\otimes\phi_j') + c (\phi_i\otimes\phi_j)
\end{equation*}
and we set $\cX^\delta := \spanlin \{ \psi_{i,j}:\, 1\le  i,j\le {n_h}\}$. 
However, this simple tensor product ansatz results in $\psi_{i,j}(1,1)=0$ for all $i$ and $j$, i.e., any numerical approximation would vanish at the right upper corner $(1,1)\in\overline{\Omega}$. Needless to say that this is a nonphysical restriction at the boundary, even though point values do not matter for an $L_2$-approximation. It is obvious that the 2D-case is only the simplest one in which this effect appears. In fact, in a general $d$D-situation ($d\ge 2$), we would obtain that optimal trial functions constructed as the $B^*$-image of tensor products would vanish on $(d-2)$-dimensional sets along the boundary of $\overline{\Omega}$, leading to nonphysical boundary values.
To reduce the impact of this effect, we suggest to consider an additional ``layer'' around the computational domain by defining a tube of width $\alpha>0$ around $\Gamma_+$ by 
\begin{equation} \label{eq:extended_domain}
\Omega_+(\alpha) := \{ x\in\R^n\setminus\Omega:\, \exists y\in\Gamma_+: \| x-y\|_\infty< \alpha\},
\qquad
\Omega(\alpha):=\Omega\cup\Omega_+(\alpha).
\end{equation}
Then, we solve the original transport problem on the extended domain $\Omega(\alpha)$ using the associated pair of optimal trial and test spaces. As a result the trial functions vanish on the exterior boundary of $\Omega_+(\alpha)$, but not on $\partial\Omega$. From a numerical perspective, by choosing $\alpha = mh$ for a (small) $m \in \N$ and the mesh size $h$, this adds $m$ layers of grid cells and thus $\mathcal{O}(n_h^{d-1})$ degrees of freedom. 
On the larger domain $\Omega_+(\alpha)$, the numerical solution remains to be a best-approximation in the enlarged trial space. Due to the larger dimension, this is no longer true w.r.t.\ the original domain $\Omega$. However, note, that the additional unknowns are only $(d-1)$-dimensional. 
We will numerically investigate this effect in Section \ref{Sec:6}.

\subsection{Post-processing} \label{Subsec:Post-processing}

As already mentioned, we are particularly interested in using our framework for problems with non-regular solutions $u \in L_2(\Omega)$, which especially includes jump discontinuities that are transported through the domain. 
However, it is well known that (piecewise) polynomial $L_2$-approximations of such discontinuities result -- especially for higher polynomial orders -- in overshoots, the so-called Gibbs phenomenon. There are many works concerning post-processing techniques to mitigate such effects, see for instance \cite{Shu2014} and the references therein.

Within the scope of this paper, we restrict ourselves to a rather simple post-processing procedure aimed at limiting the solution near jump discontinuities. 
Let  $\cYd \subset \cY$ be a conforming FE test space on $\Omega\subset\R^n$ corresponding to a partition $\mathcal{T}_\delta = \{K_i\}_{i=1}^{n_{\mathcal{T}_\delta}}$ of $\Omega = \bigcup_{i=1}^{n_{\mathcal{T}_\delta}} K_i$ with polynomial order $p \geq 2$:
\begin{equation*}
\cYd := \{ v \in C^0(\Omega): v|_K \in \mathbb{P}^p(K)\, \forall K \in \mathcal{T}_\delta, v|_{\Gamma_+}=0 \} \subset \cY.
\end{equation*}
If $w^\delta \in \cYd$ denotes the solution to \eqref{eq:discrete_reformulated}, the solution $u^\delta \in \cXd = B^* \cYd$ to \eqref{eq:disc_form} reads
$
u^\delta = B^* w^\delta =  - \sum_{i=1}^n b_i \partial_{x_i} w^\delta + (c - \nabla \cdot \vec{b}) w^\delta.
$
Since $w^\delta \in \cYd$ is a FE function, the partial derivatives $\partial_{x_i} w^\delta, i=1,\dots, n$ contain discontinuities across the cell boundaries, such that limiting these terms has the potential to mitigate overshoot effects. For all $K \in \mathcal{T}_\delta$, we have $\partial_{x_i} w^\delta|_K \in \mathbb{P}^p(K)$. Based upon this, we define  
\begin{equation*}
	\widetilde{\partial_{x_i} w^\delta} \in L_2(\Omega) 
	\quad \text{by} \quad
	\widetilde{\partial_{x_i} w^\delta}|_K := P_{\mathbb{P}^{(p-1)}(K)} \partial_{x_i} w^\delta|_K \quad \forall K \in \mathcal{T}_\delta,
\end{equation*} 
where $P_{\mathbb{P}^{(p-1)}(K)}$ is the $L_2$-orthogonal projection onto the polynomials of order at most $(p-1)$ on $K$. We then define the post-processed solution to \eqref{eq:disc_form} as 
\begin{equation*}
\tilde{u}^\delta := - \sum_{i=1}^n b_i \widetilde{\partial_{x_i} w^\delta} + (c - \nabla \cdot \vec{b}) w^\delta.
\end{equation*}
As a first attempt, one may perform the element-wise $L_2$-projection on all grid cells. However, for many problems it might be better (or even necessary) to choose a set of grid cells $\mathcal{T}_\delta^{\text{jump}} \subset \mathcal{T}_\delta$ that contains all cells where overshoots due to the jumps indeed occur, and only perform the post-processing for the cells $K \in \mathcal{T}_\delta^{\text{jump}}$. For methods that are able to detect such cells we refer to \cite{QiuShu2005}.

Due to the construction of the post-processed solution independent from the trial space $\mathcal{X}^\delta$, it is not clear whether the post-processed solution shows the same convergence rate as the standard solution. We will investigate the convergence behavior in numerical examples in Section \ref{Sec:6}.
We will test this approach 
for piecewise constant solutions $u$ with jump discontinuities. For more complex problems, perhaps other, more sophisticated methods from the literature have to be used.

\section{The reduced basis method for parametrized transport problems}\label{Sec:RBM}

In this section we generalize the above setting to problems depending on a parameter and apply the reduced basis method for that purpose, \cite{QuMaNe16,HeRoSt16,Haasdonk:RB}.

\subsection{Parametrized transport problem} \label{Subsec:Param_Probl}
We consider a parametrized problem based upon a compact set of parameters $\cP \subset \R^p$. In analogy to the above framework we define the domain $\Omega$ and the now possibly parameter-dependent quantities $\vec{b}_\mu \in C^1(\bar{\Omega})^n$ and $c_\mu \in C^0(\bar\Omega)$ with $c_\mu - \frac{1}{2} \nabla \cdot \vec{b}_\mu \geq 0$ for all $\mu \in \cP$. For all $\mu \in \cP$ we define  $f_{\mu; \circ} \in C^0(\bar\Omega)$ and $g_\mu \in C^0(\bar{\Gamma}_-)$. Then we consider the parametric problem of finding $u_\mu:\Omega\to\R$ such that
\begin{equation*}\label{eq:parametric}
	\begin{aligned}
	B_{\mu;\circ} u_\mu(z) 
		&:= \vec{b}_\mu(z)\cdot \nabla u_\mu(z) + c_\mu(z) u_\mu(z) = f_{\mu; \circ}(z),
		&&z\in\Omega,
		\\
	u_\mu(z)
		&=g_\mu(z),
		&& z\in\Gamma_{-}.
	\end{aligned}
\end{equation*}

\begin{assumption}\label{Ass:ParBoundaries}
We assume that  $\Omega$, $\cP$ and $\vec{b}_\mu$ are chosen such that the in- and outflow boundaries $\Gamma_{\pm} := \{ z\in\partial \Omega:\, \vec{b}_\mu(z)\cdot \vec{n}(z) \gtrless 0\}$ are \emph{parameter-independent}. 
\end{assumption}

\begin{remark}
As we shall see below, Assumption \ref{Ass:ParBoundaries} is a direct consequence of a necessary density assumption to be formulated below. However, as stated in \cite{DaPlWe14}, for parameter-dependent $\Gamma_\pm(\mu)$ and a polyhedral domain $\Omega$, it is always possible to decompose $\cP$ into a finite number of subsets $\cP_m$, $m=1,\dots, M,$ with fixed parameter-independent corresponding in- and outflow boundaries. Hence, one considers $M$ sub-problems on $\cP_m$, $m=1,\dots,M$, with separate reduced models.  
Moreover, one could also consider parameter-dependent $\Omega_\mu$, $\Gamma_{\pm,\mu}$ that can be transformed onto a parameter-independent reference domain $\Omega$ with fixed in- and outflow boundaries by varying the data. 
\end{remark}

Next, we require Assumption \ref{lemma:B12} for the formal adjoint $B^*_{\mu;\circ}$ for all $\mu \in \cP$ such that we can apply the above framework separately for all $\mu \in \cP$ in order to define the test space $\cY_\mu$ with parameter-dependent norm $\|v\|_{\mathcal{Y}_{\mu}} := \|B^*_\mu v \|_{L_2(\Omega)}$ as well as the extended operators $B_\mu : L_2(\Omega) \to \cY'_\mu$ and $B^*_\mu : \cY_\mu \to L_2(\Omega)$.
Hence, we
aim at determining solutions $u_\mu \in L_2(\Omega)$ such that
\begin{equation}\label{eq:parametrized weak form}
b_\mu(u_\mu, v) := (u_\mu, B^*_\mu v)_{L_2(\Omega)} = f_\mu(v) \quad \forall v \in \cY_\mu.
\end{equation}
Note, that thanks to the definition of $\mathcal{Y}_{\mu}$ we have $\| B_{\mu}^{-*}\|_{L(L_{2}(\Omega),\mathcal{Y}_{\mu})}=1$ and therefore
\begin{equation}\label{eq:stability of solution}
\| u_{\mu} \|_{L_{2}(\Omega)} \leq \|f_{\mu}\|_{\mathcal{Y}_{\mu}'}.
\end{equation}

We mention that the norms $\|\cdot\|_{\cY_\mu}$ cannot be expected to be pairwise equivalent for different $\mu \in \cP$, which means that even the sets of two test spaces $\cY_{\mu_1}$, $\cY_{\mu_2}, \mu_1 \neq \mu_2$, can differ. Therefore, we define as in \cite{DaPlWe14} the parameter-independent test space
\begin{equation*}
\bar \cY := \bigcap_{\mu \in \cP} \cY_{\mu},
\end{equation*}
where we assume that $\bar{\mathcal{Y}}$ is dense in $\mathcal{Y}_{\mu}$ for all $\mu \in \mathcal{P}$.%
\footnote{This assumption, which is required for instance for Lemma \ref{lemma:compactness solution set}, automatically implies that $\Gamma_\pm$ are parameter-independent (Assumption \ref{Ass:ParBoundaries}), since a homogeneous Dirichlet boundary condition on $\Gamma_+$ is included in the test spaces.} Thanks to the compactness of $\mathcal{P}$ we may equip $\bar{\mathcal{Y}}$ with the norm 
$ 
\|v\|_{\bar \cY} := \sup_{\mu \in \cP} \|v\|_{\mathcal{Y}_{\mu}}.
$ 
The above theory of optimal trial and test spaces as well as well-posedness immediately extends to the parameter-dependent case in an obvious manner.

As usual, we assume that $B^*_\mu$ and $f_\mu$ are affine w.r.t.\ the parameter. In detail, we assume that there exist functions $\theta^{q}_b \in C^{0}(\bar{\mathcal{P}})$ for $q=1,\dots,Q_b$ and $\theta^{q}_f \in C^{0}(\bar{\mathcal{P}})$ for $q_f = 1,\dots, Q_f$
and $\mu$-independent operators $(B^{q})^{*} \in L(\bar{\cY},L_2(\Omega)), q=1,\dots,Q_b$  and linear functionals $f^{q} \in \bar{\cY}', q_f = 1,\dots, Q_f$ such that for all $\mu \in \cP$ we have 
\begin{equation}\label{eq:affine decomposition}
B^*_\mu = \sum_{q=1}^{Q_b} \theta^q_b(\mu)\, (B^q)^* \in L(\cY_\mu, L_2(\Omega)), \quad 
f_\mu = \sum_{q=1}^{Q_f} \theta^q_f(\mu)\, f^q  \in \cY_\mu'.
\end{equation}

\begin{lemma}\label{lemma:compactness solution set}
Under the above assumptions, the set  $\mathcal{M}:= \{ u_{\mu} \, \text{solves} \, \eqref{eq:parametrized weak form},\enspace \mu \in \mathcal{P}\}$ of solutions is a compact subset of $L_{2}(\Omega)$.
\end{lemma}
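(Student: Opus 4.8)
The plan is to show that $\mathcal{M}$ is the image of the compact parameter set $\mathcal{P}$ under a continuous map $\mu \mapsto u_\mu$, so that compactness of $\mathcal{M}$ follows immediately from the continuous image of a compact set being compact. First I would fix the target: define the solution map $S:\mathcal{P}\to L_2(\Omega)$, $S(\mu):=u_\mu$, where $u_\mu$ is the unique solution of \eqref{eq:parametrized weak form} guaranteed by the parameter-dependent version of Theorem \ref{Theo:Prop:DHSW2012}. Since $\mathcal{P}$ is compact and $L_2(\Omega)$ is a metric space, it suffices to prove that $S$ is continuous; then $\mathcal{M}=S(\mathcal{P})$ is compact.

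The core estimate is to bound $\|u_{\mu_1}-u_{\mu_2}\|_{L_2(\Omega)}$ in terms of $|\mu_1-\mu_2|$ (or at least to establish sequential continuity). I would use the stability bound \eqref{eq:stability of solution}, namely $\|u_\mu\|_{L_2(\Omega)}\le \|f_\mu\|_{\mathcal{Y}_\mu'}$, together with the affine decompositions \eqref{eq:affine decomposition}. The idea is to write the difference $u_{\mu_1}-u_{\mu_2}$ as the solution of a transport problem for the parameter $\mu_1$ whose right-hand side collects two error terms: one coming from the variation of $f_\mu$ and one from the variation of $B^*_\mu$ applied to $u_{\mu_2}$. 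Concretely, subtracting the two weak formulations gives, for all $v\in\bar{\mathcal{Y}}$,
\begin{equation*}
(u_{\mu_1}-u_{\mu_2},B^*_{\mu_1}v)_{L_2(\Omega)} = (f_{\mu_1}-f_{\mu_2})(v) + (u_{\mu_2},(B^*_{\mu_2}-B^*_{\mu_1})v)_{L_2(\Omega)},
\end{equation*}
so that the difference solves \eqref{eq:parametrized weak form} for parameter $\mu_1$ with a modified functional on the right-hand side, and \eqref{eq:stability of solution} bounds $\|u_{\mu_1}-u_{\mu_2}\|_{L_2(\Omega)}$ by the dual norm (with respect to $\|\cdot\|_{\mathcal{Y}_{\mu_1}}$) of that functional. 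It then remains to control both terms: the first by $\sum_q |\theta^q_f(\mu_1)-\theta^q_f(\mu_2)|\,\|f^q\|_{\bar{\mathcal{Y}}'}$, and the second by $\|u_{\mu_2}\|_{L_2(\Omega)}\sum_q |\theta^q_b(\mu_1)-\theta^q_b(\mu_2)|\,\|(B^q)^*\|_{L(\bar{\mathcal{Y}},L_2(\Omega))}$, using the boundedness of the $(B^q)^*$ and $f^q$ on $\bar{\mathcal{Y}}$ and the continuity of the coefficient functions $\theta^q_b,\theta^q_f\in C^0(\bar{\mathcal{P}})$.

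The main obstacle I anticipate is the parameter-dependence of the test-space norms: the dual norm $\|\cdot\|_{\mathcal{Y}_{\mu_1}'}$ appearing in the stability estimate and the operator norms $\|(B^q)^*\|$, $\|f^q\|$ are measured in \emph{different} norms, and as emphasized in the text the norms $\|\cdot\|_{\mathcal{Y}_\mu}$ need not be pairwise equivalent. I would resolve this by systematically passing to the parameter-independent space $\bar{\mathcal{Y}}$ equipped with $\|v\|_{\bar{\mathcal{Y}}}=\sup_{\mu}\|v\|_{\mathcal{Y}_\mu}$: since $\|v\|_{\mathcal{Y}_\mu}\le\|v\|_{\bar{\mathcal{Y}}}$ for every $\mu$, any functional's $\mathcal{Y}_\mu'$-norm is dominated by its $\bar{\mathcal{Y}}'$-norm, and the density of $\bar{\mathcal{Y}}$ in each $\mathcal{Y}_\mu$ lets me test on $\bar{\mathcal{Y}}$ without loss. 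This reduces everything to uniform bounds on $\bar{\mathcal{Y}}$, after which continuity of $S$ follows from the uniform continuity of the finitely many $\theta$-functions on the compact set $\bar{\mathcal{P}}$ together with the uniform bound $\sup_\mu\|u_\mu\|_{L_2(\Omega)}<\infty$ (itself a consequence of \eqref{eq:stability of solution} and the affine form of $f_\mu$). I would close by invoking that a continuous image of a compact set is compact.
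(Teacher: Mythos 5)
Your overall strategy (continuity of the solution map $\mu\mapsto u_\mu$ from the compact set $\mathcal{P}$ into $L_2(\Omega)$, then ``continuous image of a compact set'') is different from the paper's, which instead extracts a \emph{weakly} convergent subsequence of solutions via the uniform bound \eqref{eq:stability of solution}, a convergent subsequence of parameters, and passes to the limit in \eqref{eq:parametrized weak form} using the strong convergence of $B^*_{\mu_{n_k}}v$ for each fixed $v\in\bar{\mathcal{Y}}$, followed by a density argument. Unfortunately, your route has a genuine gap exactly at the point you flag as ``the main obstacle'': the direction of the comparison between the dual norms is reversed. From $\|v\|_{\mathcal{Y}_\mu}\le\|v\|_{\bar{\mathcal{Y}}}$ one gets, for a functional $g$,
\begin{equation*}
\|g\|_{\bar{\mathcal{Y}}'}=\sup_{v}\frac{|g(v)|}{\|v\|_{\bar{\mathcal{Y}}}}\;\le\;\sup_{v}\frac{|g(v)|}{\|v\|_{\mathcal{Y}_{\mu}}}=\|g\|_{\mathcal{Y}_{\mu}'},
\end{equation*}
i.e.\ the parameter-dependent dual norm \emph{dominates} the $\bar{\mathcal{Y}}'$-norm, not the other way around (this is precisely the inequality recorded in the paper's proof for $f_\mu$). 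Your perturbation argument bounds the functional
$g(v):=(f_{\mu_1}-f_{\mu_2})(v)+(u_{\mu_2},(B^*_{\mu_2}-B^*_{\mu_1})v)_{L_2(\Omega)}$
by a small multiple of $\|v\|_{\bar{\mathcal{Y}}}$, hence controls only $\|g\|_{\bar{\mathcal{Y}}'}$; but the stability estimate \eqref{eq:stability of solution} applied to the difference $u_{\mu_1}-u_{\mu_2}$ requires $\|g\|_{\mathcal{Y}_{\mu_1}'}=\sup_v |g(v)|/\|B^*_{\mu_1}v\|_{L_2(\Omega)}$, which is the \emph{larger} quantity. Closing the chain would require $\|v\|_{\bar{\mathcal{Y}}}\le C\,\|v\|_{\mathcal{Y}_{\mu_1}}$ uniformly in $v$, i.e.\ uniform equivalence of all the norms $\|\cdot\|_{\mathcal{Y}_\mu}$ --- and the paper explicitly warns that these norms cannot be expected to be pairwise equivalent. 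So the Lipschitz/continuity estimate for $S$ does not follow from the stated assumptions.

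This is why the paper's proof deliberately works with weak convergence of $u_{\mu_{n_k}}$ and tests only against fixed $v\in\bar{\mathcal{Y}}$: in the pairing $(u_{\mu_{n_k}},B^*_{\mu_{n_k}}v)_{L_2(\Omega)}$ the affine decomposition \eqref{eq:affine decomposition} delivers strong convergence of $B^*_{\mu_{n_k}}v$ in $L_2(\Omega)$ measured against $\|v\|_{\bar{\mathcal{Y}}}$, which is exactly the (weaker) control that is available, and the weak-times-strong convergence of the pairing then suffices. If you want to salvage your approach you would need an additional hypothesis of uniform norm equivalence over $\mathcal{P}$ (or an argument upgrading the weak convergence to norm convergence); as written, the key estimate fails.
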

\begin{proof} (Sketch)
The main idea of the proof is to exploit the continuity of the mappings $\mu \mapsto B_{\mu}^{*}$ and $\mu \mapsto f_{\mu}$  to show that $\tilde{u}$ satisfies \eqref{eq:parametrized weak form} for some $\mu$ for all $v \in \bar{\mathcal{Y}}$ and subsequently use a density argument, see Appendix \ref{proof:compactness solution set} for details.
\end{proof}

\subsection{Discretization} \label{Subsec:Param_Discr}
For the discretization of the parametric problem, we introduce a parameter-independent discrete space $\cY^\delta \subset \bar \cY$. Next, for fixed $\mu \in \cP$ we define the discrete test space and the corresponding trial space as
\begin{equation*}
	\cY^\delta_\mu := \clos_{\|\cdot\|_{\mathcal{Y}_{\mu}}}(\cY^\delta)\subset \cY_\mu,
	\qquad
	\cX^\delta_\mu := B^*_\mu (\cY^\delta) \subset L_2(\Omega).
\end{equation*}
Note, that for different $\mu\in\cP$, the spaces $\cX_\mu^\delta$ \emph{differ as sets} but have the \emph{common norm} $\|\cdot\|_{L_2(\Omega)}$, whereas the spaces $\cY^\delta_\mu$ consist of the \emph{common set} $\cY^\delta$ with \emph{different norms} $\|\cdot\|_{\mathcal{Y}_{\mu}}$.
With the same reasoning as for the non-parametric case (see \eqref{eq:inf-sup is one}), we have an optimal discrete inf-sup constant for all $\mu \in \cP$, i.e., $\beta^\delta_\mu := \inf\limits_{w^\delta\in \cX^\delta_\mu} \sup\limits_{v^\delta \in \cY^\delta_\mu} \frac{b_\mu(w^\delta,v^\delta)}{\|w^\delta\|_{L_2(\Omega)} \|v^\delta\|_{\cY_\mu}} = 1$. 
The discrete solution $u^\delta_\mu \in \cX^\delta_\mu$ is then defined via
\begin{equation} \label{eq:discrete_param}
u^\delta_\mu \in \cX^\delta_\mu: \quad 
	b_\mu(u^\delta_\mu, v^\delta) 
	= (u^\delta_\mu, B^*_\mu v^\delta)_{L_2(\Omega)} 
	= f_\mu(v^\delta) 
	\quad \forall v^\delta \in \cY^\delta_\mu.
\end{equation}
As in \S \ref{SubSec:NewApproach} we observe that problem \eqref{eq:discrete_param} is equivalent to the problem 
\begin{equation}
\label{eq:discrete_param_reform}
w^\delta_\mu \in \cY^\delta_\mu: \qquad
a_\mu( w^\delta_\mu, v^\delta)
:= (B^*_\mu w^\delta_\mu, B^*_\mu v^\delta)_{L_2(\Omega)} 
= f_\mu(v^\delta) \quad \forall v^\delta \in \cY^\delta_\mu
\end{equation}
and we may thus solve \eqref{eq:discrete_param_reform} and identify the solution of \eqref{eq:discrete_param} as $u^\delta_\mu := B^*_\mu  w^\delta_\mu$.

\begin{remark}
Since for all $\mu \in \cP$ we have 
$\cX^\delta_\mu = B^*_\mu (\cYd) = \sum_{q=1}^{Q_b} \theta_b^q(\mu) (B^q)^* (\cYd)$, 
there holds 
$\cX^\delta_\mu \subset \widehat{\cXd} := (B^1)^* (\cYd) + \cdots + (B^{Q_b})^* (\cYd) \subset L_2(\Omega)$, 
which means that the trial spaces for all $\mu \in \cP$ are contained in a common discrete space with dimension $\dim \widehat{\cXd} \leq Q_b \cdot \dim \cYd$.
\end{remark}

\begin{corollary}\label{coro:compactness discrete solution set}
Under the above assumptions the discrete solution set \\$\mathcal{M}^{\delta}:=\{ u^{\delta}_{\mu} \enspace \text{solves} \enspace \eqref{eq:discrete_param}, \enspace  \mu \in \mathcal{P}\} \subset \widehat{\mathcal{X}^{\delta}}$ is a compact subset of $\widehat{\mathcal{X}^{\delta}}$.
\end{corollary}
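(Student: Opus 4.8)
The plan is to exploit the compactness of $\cP$ directly: I would show that the discrete solution map $\mu \mapsto u^\delta_\mu$ is continuous from $\cP$ into the finite-dimensional space $\widehat{\cXd} \subset L_2(\Omega)$, so that $\mathcal{M}^\delta = \{u^\delta_\mu : \mu \in \cP\}$ is the continuous image of a compact set and hence compact. This is a streamlined version of the reasoning behind Lemma \ref{lemma:compactness solution set}: because every $u^\delta_\mu$ already lives in the fixed finite-dimensional space $\widehat{\cXd}$, no density argument is needed and continuity of the solution map does all the work.

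To make this precise I would pass to the equivalent reformulation \eqref{eq:discrete_param_reform}. Since $\cY^\delta \subset \bar\cY$ is finite-dimensional, its closure $\cY^\delta_\mu$ coincides with $\cY^\delta$ as a set for every $\mu$, so I may fix a single basis $\{\phi_1,\dots,\phi_N\}$ of $\cY^\delta$ once and for all. Writing $w^\delta_\mu = \sum_k (\mathbf{w}_\mu)_k \phi_k$, problem \eqref{eq:discrete_param_reform} becomes the linear system $A(\mu)\mathbf{w}_\mu = \mathbf{f}(\mu)$ with
\[
A(\mu)_{ik} = \sum_{q,q'=1}^{Q_b} \theta^q_b(\mu)\,\theta^{q'}_b(\mu)\,\bigl((B^q)^*\phi_k,\,(B^{q'})^*\phi_i\bigr)_{L_2(\Omega)},
\qquad
\mathbf{f}(\mu)_i = \sum_{q_f=1}^{Q_f}\theta^{q_f}_f(\mu)\,f^{q_f}(\phi_i),
\]
by the affine decomposition \eqref{eq:affine decomposition}. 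Since the $\theta$-functions are continuous on $\bar\cP$ while the inner products and the functionals $f^{q_f}$ are $\mu$-independent, both $A(\mu)$ and $\mathbf{f}(\mu)$ depend continuously on $\mu$.

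It then remains to pass from continuity of $A(\mu)$ to continuity of $A(\mu)^{-1}\mathbf{f}(\mu)$. Each $A(\mu)$ is symmetric positive definite, because $\mathbf{c}^T A(\mu)\mathbf{c} = \|B^*_\mu v\|_{L_2(\Omega)}^2 = \|v\|_{\cY_\mu}^2$ for $v=\sum_k c_k\phi_k$, and $\|\cdot\|_{\cY_\mu}$ is a genuine norm on $\cY^\delta \subset \bar\cY \subset \cY_\mu$ by the injectivity (B1) in Assumption \ref{lemma:B12}; this is precisely the optimal discrete inf-sup identity $\beta^\delta_\mu = 1$ recorded before \eqref{eq:discrete_param}. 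Hence $\mu \mapsto \lambda_{\min}(A(\mu))$ is continuous and strictly positive on the compact set $\cP$, so it is bounded below by some $\lambda_0>0$, giving uniform invertibility and continuity of $\mu\mapsto A(\mu)^{-1}$. Consequently $\mu\mapsto \mathbf{w}_\mu$, and thus $\mu\mapsto w^\delta_\mu$, is continuous in $\cY^\delta$, and finally $u^\delta_\mu = B^*_\mu w^\delta_\mu = \sum_{q}\theta^q_b(\mu)(B^q)^* w^\delta_\mu$ is continuous into $L_2(\Omega)$ as a finite sum of products of the continuous scalars $\theta^q_b(\mu)$, the continuous $\cY^\delta$-valued map $w^\delta_\mu$, and the fixed bounded operators $(B^q)^*$.

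The only genuinely substantive step I anticipate is the uniform lower bound $\lambda_0>0$ on the smallest eigenvalue of $A(\mu)$ over $\cP$, i.e.\ uniform invertibility; this is exactly what the optimal stability $\beta^\delta_\mu=1$ together with the compactness of $\cP$ deliver, and everything else is routine continuity bookkeeping. With this bound in hand, continuity of $\mu\mapsto u^\delta_\mu$ is established and the compactness of $\mathcal{M}^\delta$ follows immediately.
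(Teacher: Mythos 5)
Your proof is correct, but it takes a genuinely different route from the paper's. The paper proves the corollary by re-running the argument of Lemma \ref{lemma:compactness solution set} inside $\widehat{\cXd}$: extract a weakly convergent subsequence of solutions and a convergent subsequence of parameters, use the continuity of $\mu \mapsto B^*_\mu$ and $\mu \mapsto f_\mu$ (coming from the affine decomposition and $\theta^q \in C^0(\bar{\cP})$) to identify the limit as the solution for the limit parameter, and conclude --- with the simplification that in the finite-dimensional Hilbert space $\widehat{\cXd}$ weak and strong convergence coincide, so the density step of the continuous case is not needed. You instead establish continuity of the solution map $\mu \mapsto u^\delta_\mu$ and invoke the fact that the continuous image of the compact set $\cP$ is compact; the substance is the explicit matrix formulation, continuity of $A(\mu)$ and $\mathbf{f}(\mu)$ via the same affine decomposition, and positive definiteness of $A(\mu)$. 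On the latter point, the needed fact is that $\|\cdot\|_{\cY_\mu}$ is a genuine norm on $\cY^\delta \subset \bar{\cY} \subset \cY_\mu$, which follows from the construction of $\cY_\mu$ and (B1); attributing it to the identity $\beta^\delta_\mu = 1$ is slightly off target, since that identity is a tautology of the choice of spaces, but the underlying well-posedness is the same. Your route is more elementary and yields strictly more, namely continuity of $\mu \mapsto u^\delta_\mu$ and a uniform lower bound on $\lambda_{\min}(A(\mu))$ over $\cP$ (for mere continuity of $A(\mu)^{-1}\mathbf{f}(\mu)$, pointwise invertibility plus continuity of $A$ would already suffice). The price is that the eigenvalue argument leans on finite-dimensionality, so unlike the paper's argument it cannot be shared with the continuous setting; the paper's choice buys a single argument reused for both Lemma \ref{lemma:compactness solution set} and the corollary.
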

\begin{proof}
The proof can be done completely analogous to the continuous setting exploiting that $\widehat{\mathcal{X}^{\delta}}$ is a Hilbert space equipped with the $L_{2}$-inner product.
\end{proof}

\subsection{Reduced scheme}\label{SubSec:4.3}

We assume that we have at our disposal a reduced test space $Y^N \subset \cY^\delta$ with dimension $N \in \N$\footnote{In order to have a clear distinction between high- and low-dimensional spaces, we use calligraphic letters for the high-dimensional and normal symbols for the reduced spaces.} constructed for instance via a greedy algorithm (see \S \ref{Subsec:Basis_generation}).
Then, for each $\mu \in \cP$ we introduce the reduced discretization with test space $Y^N_\mu := \clos_{\|\cdot\|_{\mathcal{Y}_{\mu}}}(Y^N) \subset \cY^\delta_\mu$ and trial space $X^N_\mu := B^*_\mu (Y^N_\mu) \subset \cX^\delta_\mu$. The reduced problem then reads
\begin{equation}\label{eq:redform}
	u^N_\mu \in X^N_\mu:
	\qquad
	b_\mu(u^N_\mu, v^N) 
	= (u^N_\mu, B^*_\mu v^N)_{L_2(\Omega)} = f_\mu(v^N) 
	\quad \forall v^N \in Y^N_\mu.
\end{equation}
As in the high-dimensional case discussed in \S \ref{Subsec:Param_Discr}, these pairs of spaces yield optimal inf-sup constants 
\begin{equation*}
\beta^N_\mu := \inf_{w^N \in X^N_\mu} \sup_{v^N \in Y^N_\mu} \frac{b_\mu(w^N, v^N)}{\|w^N\|_{L_2(\Omega)} \|v^N\|_{\cY_\mu}} = 1 \quad \text{for all } \mu \in \cP.
\end{equation*}
Hence, regardless of the choice of the `initial' reduced test space $Y^N$ we get a perfectly stable numerical scheme without the need to stabilize. Note, that this is a major difference to the related work\cite{DaPlWe14}, where, due to a different strategy in finding discrete spaces, a stabilization procedure is necessary. 
Using the least-squares-type reformulation \eqref{eq:discrete_param_reform}, we can (similarly to \eqref{eq:discrete_reformulated}) first compute $w^N_\mu \in Y^N_\mu$ such that
\begin{equation} \label{eq:reduced_param_reform}
	a_\mu(w^N_\mu, v^N) 
	=  (B^*_\mu w^N_\mu, B^*_\mu v^N)_{L_2(\Omega)} 
	= f_\mu(v^N) \quad \forall v^N \in Y^N_\mu,
\end{equation}
and then set $u^N_\mu := B^*_{\mu} w^N_\mu$ as the solution of \eqref{eq:redform}.

\subsubsection*{Offline-/Online-Decomposition} 
By employing the assumed affine parameter dependence of $B^*_\mu$ and $f_\mu$, the computation of $u^N_\mu$ can be decomposed efficiently in an offline and online stage: Let $\{ v^N_i:\, i=1,\dots,N\}$ be a basis of the parameter-independent test space $Y^N$. In the offline stage, we precompute and store the following parameter-independent quantities:
\begin{align*}
	b_{q,i} &:= (B^q)^* v^N_i, 
		&& \text{for  } q=1,\dots,Q_b, \, i=1,\dots, N, \\
	 A_{q_1, q_2; i,j} &:= ( b_{q_1,i}, b_{q_2,j} )_{L_2(\Omega)}, 
		&& \text{for  }q_1, q_2=1,\dots,Q_b, \, i,j=1,\dots, N, \\
	f_{q,i} &:= f^q(v^N_i), 
		&& \text{for  } q=1,\dots, Q_f, \, i=1,\dots, N.
\end{align*}
In the online stage, given a new parameter $\mu \in \cP$, we assemble for all  $i,j=1,\dots, N$ 
\begin{align*}
	({\mathbf{A}}^N_\mu)_{i,j}
		&:= (B^*_\mu v^N_i, B^*_\mu v^N_j )_{L_2(\Omega)} 
		= \sum_{q_1=1}^{Q_b} \sum_{q_2=1}^{Q_b} 
		\theta^{q_1}_b(\mu) \theta^{q_2}_b(\mu) 
		{A}_{q_1, q_2; i,j} ,  \\
	(\mathbf{f}^N_\mu)_i
		&:= f_\mu( v^N_i) 
		= \sum_{q=1}^{Q_f} \theta^q_f(\mu) f_{q,i}.
\end{align*}
Next, we compute $w^N_\mu = \sum_{i=1}^N w_i(\mu)\, v^N_i \in Y^N$ as in \eqref{eq:reduced_param_reform} by solving the linear system ${\mathbf{A}}^N_\mu \mathbf{w}^N_\mu = \mathbf{f}^N_\mu$ of size $N$, where $\mathbf{w}^N_\mu := (w_i(\mu))_{i=1,\ldots,N}\in\R^N$. The reduced basis approximation is then determined as 
\begin{equation*}
	u^N_\mu 
	:= B^*_{\mu} w^N_\mu
	= \sum_{i=1}^N w_i(\mu)\, B^*_{\mu} v^N_i
	= \sum_{i=1}^N \sum_{q=1}^{Q_b} w_i(\mu)\, \theta^q_b(\mu)\, b_{q,i}.
\end{equation*}

\subsection{Basis generation} \label{Subsec:Basis_generation}

While in the standard RB method a reduced trial space is generated from snapshots of the parametrized problem, the reduced discretization of our method is based upon one common reduced test space, while the reduced trial spaces are parameter-dependent. 
However, although we have to find a good basis of the reduced test space $Y^N \subset \cYd$, we still want to build the reduced model from snapshots of the problem. To that end, we again use the formulation \eqref{eq:discrete_param_reform}: Given $\tilde\mu \in \cP$, let $w_{\tilde\mu}^\delta \in \cY^\delta_{\tilde\mu}$ be the solution of \eqref{eq:discrete_param_reform}, such that $u_{\tilde\mu}^\delta := B^*_{\tilde\mu} w_{\tilde\mu}^\delta \in \cX^\delta_{\tilde\mu}$ is the solution of \eqref{eq:discrete_param}.
If $w_{\tilde{\mu}}^\delta \in Y^N$, then we have $u_{\tilde\mu}^\delta \in X^N_{\tilde \mu} = B^*_{\tilde\mu} Y^N$, such that $u^N_{\tilde{\mu}} = u^\delta_{\tilde{\mu}}$ holds for the solution of \eqref{eq:redform}.
Note, however, that due to the parameter dependence of the trial spaces $u_{\tilde\mu}^\delta$ is only included in $X^N_{\tilde\mu}$, but in general $u_{\tilde\mu}^\delta \notin X^N_{\mu}$ for $\mu \neq \tilde{\mu}$ (instead, $B^*_{\mu} w_{\tilde\mu}^\delta \in X^N_\mu$).
Building the reduced test space $Y^N$ from ``snapshots'' of \eqref{eq:discrete_param_reform} is thus analogous to the standard RB strategy to build the reduced trial space from snapshots of the problem of interest: Although a single trial space $X^N_\mu$ is not solely spanned of snapshots, the model error $\|u^N_\mu - u^\delta_\mu\|_{L_2(\Omega)}$ is zero for all parameter values $\mu$ whose \eqref{eq:discrete_param_reform}-snapshot is included in $Y^N$.

\begin{algorithm}[tb]
    \caption{Strong greedy method\label{alg:strong greedy}}
    \begin{algorithmic}[1] 
	\Statex \textbf{input:} train sample $\Xi \subset \mathcal{P}$, tolerance $\varepsilon$ 
	\Statex \textbf{output:} set of chosen parameters $S_{N}$, reduced test space $Y^{N}$ 
	\State \textbf{Initialize} $S_{0} \leftarrow \emptyset$, $Y^{0} \leftarrow \{0\}$
	\ForAll{$\mu \in \Xi$} 
		\State Compute $w^{\delta}_{\mu}$ and $u^{\delta}_{\mu} = B_{\mu}^{*}w^{\delta}_{\mu}$
	\EndFor
	\While{$true$}
		\If{$ \max_{\mu \in \Xi} \| u^{\delta}_{\mu} - u^{N}_{\mu}\|_{L_{2}(\Omega)} \leq \varepsilon$} 
			\State \Return
		\EndIf
		\State $\mu^{*}\leftarrow \arg \max_{\mu \in \Xi} \| u^{\delta}_{\mu} - u^{N}_{\mu}\|_{L_{2}(\Omega)}$\label{alg:mu max}
		\State $S_{N+1} \leftarrow S_{N} \cup \{\mu^{*}\}$
		\State $Y^{N+1} \leftarrow \spanlin \{w^{\delta}_{\mu}, \mu \in S_{N+1}\}$\label{alg:RB test space}
		\State $ N \leftarrow N+1$
	\EndWhile
    \end{algorithmic}
\end{algorithm}

Algorithm \ref{alg:strong greedy} describes an analogue of the standard RB strong greedy algorithm for our setting: Iteratively, we first evaluate the model errors of reduced solutions for all parameters $\mu$ in a train sample $\Xi \subset \cP$. Then, we extend $Y^N$ by the \eqref{eq:discrete_param_reform}-snapshot $w^\delta_{\mu^*} \in \bar\cY^\delta$ corresponding to the worst-approximated parameter $\mu^*$. This automatically extends $X^N_{\mu^*}$ by the \eqref{eq:discrete_param}-snapshot $u^\delta_{\mu^*} \in \cX^\delta_{\mu^*}$, such that from then on the model error for $\mu^*$ is zero.

Of course, this algorithm is computationally expensive, since we have to compute $u^\delta_\mu$ for all $\mu \in \Xi$, which may not be feasible for very complex problems and a finely resolved $\Xi \subset \cP$. It is hence desirable to use some kind of surrogate -- ideally a reliable and efficient error estimator --  instead of the true model error in the greedy algorithm. However, as will be seen in the next subsection, the standard error estimator is not offline-online decomposable in our setting -- a problem already encountered in \cite{DaPlWe14}. Therefore, we have to use error indicators instead when using the full model error is computationally not feasible. We note that until now we are not able to prove convergence of the greedy algorithm due to the parameter-dependent trial spaces. 

Alternatively, to obtain a computational more feasible offline stage one might let the strong greedy run on a small test set with relatively high tolerance and use a hierarchical a posteriori error estimator on the large(r) training set, which was proposed in a slightly different context in \cite{SmeOhl2017}.
Another idea might be to keep a second test training set during the greedy algorithm. In order to estimate the dual norm of the residual more cheaply one could then compute Riesz representations on the span of test training snapshots instead of the full discrete space.

\subsection{Error analysis for the reduced basis approximation} \label{Subsec:RB_Error_Analysis}
In the online stage, for a given (new) parameter $\mu \in \cP$ we are interested in efficiently estimating the model error $\|u^\delta_\mu - u^N_\mu\|_{L_2(\Omega)}$ to assess the quality of the reduced solution. As already mentioned above, due to the choice of the reduced spaces, the reduced inf-sup and continuity constants are unity. This means that the error, the residual, and the error of best approximation coincide also in the reduced setting (cf.\ \eqref{eq:opt}). To be more precise, defining for some $v\in L_2(\Omega)$ the discrete residual $r_{\mu}^\delta(v) \in (\cY^\delta_\mu)'$ as
\begin{equation*}
	\langle r^\delta_{\mu}(v), w^\delta\rangle_{(\mathcal{Y}^\delta_\mu)'\times\cY^\delta_\mu}  
		:= f(w^\delta) - (v, B^*_\mu w^\delta)_{L_2(\Omega)}, 
		\quad w^\delta \in \cY^\delta_\mu,
\end{equation*}
we have 
\begin{equation*}
	\|u^\delta_\mu - u^N_\mu\|_{L_2(\Omega)} 
	= \|r^\delta_{\mu}(u^N_\mu)\|_{(\cY^\delta_\mu)'} 
	= \inf_{v^N \in X^N_\mu} \|u^\delta_\mu- v^N \|_{L_2(\Omega)}.
\end{equation*}
In principle, $r_{\mu}^\delta(v) \in (\cY^\delta_\mu)'$ can  be computed. However, due to the special choice of the parameter-dependent norm of $\cY^\delta_\mu$, i.e., $\|w\|_{\cY^\delta_\mu} = \|B^*_\mu w\|_{L_2(\Omega)}$, the computation of the dual norm involves applying the inverse operator $(B^*_\mu)^{-1}$ and is thus as computationally expensive as solving the discrete problem \eqref{eq:discrete_param}. Therefore, the computation of $\|r^\delta_{\mu}(u^N_\mu)\|_{(\cY^\delta_\mu)'}$ is not offline-online decomposable, so that the residual cannot be computed in an online-efficient manner. 

As an alternative for the error estimation mainly in the online stage, we consider an online-efficient, but non-rigorous \emph{hierarchical error estimator} similar to the one proposed in \cite{BMNP04}.
Let $Y^N \subset Y^M \subset Y^\delta$ be nested reduced spaces with dimensions $N$ and $M$, $N<M$ and denote for some $\mu \in \cP$ by $u^N(\mu) \in X^N_\mu := B^*_\mu Y^N$, $u^M(\mu) \in X^M_\mu := B^*_\mu Y^M$ the corresponding solutions of \eqref{eq:redform}. 
Then, we can rewrite the model error of $u^N$ as 
\begin{equation*}
\|u^N- u^\delta\|_{L_2(\Omega)} = \| u^N - u^M + u^M - u^\delta \|_{L_2(\Omega)}
\leq \| u^N - u^M \|_{L_2(\Omega)} + \| u^M - u^\delta \|_{L_2(\Omega)}.
\end{equation*}
Assuming that $Y^M$ is large enough such that $\| u^M - u^\delta \|_{L_2(\Omega)} < \varepsilon \ll 1$, we can approximate the model error of $u^N$ by
\[
\|u^N- u^\delta\|_{L_2(\Omega)} \leq \| u^N - u^M \|_{L_2(\Omega)} + \varepsilon \approx  \| u^N - u^M \|_{L_2(\Omega)},
\]
which can be computed efficiently also in the online stage. In practice, $Y^N$ and $Y^M$ can be generated by the strong greedy algorithm with different tolerances $\varepsilon_N$ and $\varepsilon_M \ll \varepsilon_N$. Of course, this approximation to the model error is in general not reliable, since it depends on the quality of $Y^M$.  
Reliable and rigorous variants of such an error estimator can be derived based on an appropriate saturation assumption, see \cite{HORU2018}. 
There, also a strategy for the use of hierarchical estimators in terms of Hermite spaces $Y^M$ for the construction of a reduced model in the offline phase have been discussed. We do not go into details here.
Numerical investigations of the quality of the error estimator will be given in \S \ref{subsec:num_rb}.

\section{Computational realization}\label{Sec:5}
	
In this section, we specify the implementation of the solution procedure developed in Section \ref{Sec:3}.  This is also used for the methods for parameter-dependent problems developed in Section \ref{Sec:RBM}. In fact, due to our assumption of affine dependence in the parameter \eqref{eq:affine decomposition}, the computational realization in the parametric setting is very similar to the standard setting and can be done following the offline-online decomposition described at the end of \S \ref{SubSec:4.3}, which is why we do not address it in this section.

To solve the discrete problem \eqref{eq:disc_form} we use the equivalent formulation \eqref{eq:discrete_reformulated}, i.e., we first find $w^\delta \in \cYd$ such that $(B^* w^\delta, B^* v^\delta)_{L_2(\Omega)} = f(v^\delta)$ for all $v^\delta \in \cYd$, and then set $u^\delta := B^* w^\delta \in \cXd$.
The solution procedure thus consists of, first, assembling and solving the problem for $w^\delta$ in $\cYd$, and second, computing $u^\delta$.
The implementation is especially dependent on the exact form of the adjoint operator $B^*$. First, we address the case of constant data, which is easier to implement and slightly more computationally efficient than the general case which we discuss subsequently.

\subsection{Implementation for constant data} \label{Sec:5_const_data} 

We first consider constant data functions in the adjoint operator, which has thus the form
$
B^*w := - \vec{b} \cdot \nabla w + c w 
$
for $0 \neq \vec{b} \in \R^n, c \in \R$. 
We have already seen in Example \ref{Ex_Illustration_Trial} that in the one-dimensional case, choosing a standard linear continuous FE space for the test space $\cYd$ yields a trial space $\cXd$ with piecewise linear and discontinuous functions. This can be generalized to conforming FE test spaces with arbitrary dimension, grid, and polynomial order: If $v^\delta \in \cYd$ is globally continuous and polynomial on each grid cell, all terms of $B^* v^\delta$ are, due to the constant data functions, still polynomials of the same or lower order on the cells, while the gradient terms yield discontinuities on the cell boundaries.  Denoting thus by $\cYd \subset \cY$ a conforming FE space on a partition $\mathcal{T}_\delta = \{K_i\}_{i=1}^{n_{\mathcal{T}_\delta}}$ of $\Omega = \bigcup_{i=1}^{n_{\mathcal{T}_\delta}} K_i$ with polynomial order $p$, and by $\bcXd \subset L_2(\Omega)$ the corresponding discontinuous FE space, i.e.,
\begin{align}
\cYd &:= \{ v \in C^0(\Omega): v|_K \in \mathbb{P}^p(K)\, \forall K \in \mathcal{T}_\delta, v|_{\Gamma_+}=0 \} \subset \cY, \label{eq:def_cYd_FE} \\
\bcXd &:= \{ u \in L_2(\Omega): u|_K \in \mathbb{P}^p(K)\, \forall K \in \mathcal{T}_\delta \} \subset L_2(\Omega), \label{eq:def_bcXd_FE}
\end{align}
we have $\cXd = B^* \cYd \subset \bcXd$ and can determine the solution $u^\delta \in \cXd$ in terms of the standard nodal basis of $\bcXd$.

Let $\mathbf{B^*} \in \R^{\bar{n_x} \times n_y}$ be the matrix representation of 
$B^* : \cYd \to \bcXd$ in the nodal bases $(\phi_1, \dots, \phi_{n_y})$ of $\cYd$ and $(\psi_1, \dots, \psi_{\bar n_x})$ of $\bcXd$, meaning that the $i$-th column of $\mathbf{B^*}$ contains the coefficients of $B^* \phi_i$ in the basis $(\psi_1, \dots, \psi_{\bar n_x})$, i.e., $B^* \phi_i = \sum_{j=1}^{\bar n_x} [\mathbf{B^*}]_{j,i} \psi_j$. Due to the form of the operator and the chosen spaces, the matrix $\mathbf{B^*}$ can be computed rather easily, see the example in \S \ref{Subsubsec:Assemble_matrices_rectangular}.
Then, the coefficient vector $\mathbf{u} = (u_1,\dots, u_{\bar n_x})^T$ of $u^\delta = \sum_{i=1}^{\bar n_x} u_i \psi_i \in \bcXd$ can simply be computed from the coefficient vector $\mathbf{w} = (w_1,\dots,w_{n_y})$ of $w^\delta = \sum_{i=1}^{n_y} w_i \phi_i \in \cYd$ by $\mathbf{u} = \mathbf{B^*} \mathbf{w}$.

To solve \eqref{eq:discrete_reformulated}, we have to assemble the matrix corresponding to the bilinear form $a: \cYd \times \cYd, a(w^\delta, v^\delta) = (B^* w^\delta, B^* v^\delta)_{L_2(\Omega)} = (w^{\delta}, v^\delta)_{\cY}$, i.e., the $\cY$-inner product matrix of $\cYd$. 
One possibility for the assembly is to use the matrix $\mathbf{B^*}$: Denoting by $\mathbf{M_{\bcXd}} \in \R^{\bar n_x \times \bar n_x}$ the $L_2$-mass matrix of $\bcXd$, i.e., $[\mathbf{M_{\bcXd}}]_{i,j} = (\psi_i, \psi_j)_{L_2(\Omega)}$, we see that 
for $\mathbf{Y} := (\mathbf{B^*})^T \mathbf{M_{\bcXd}} \mathbf{B^*} \in \R^{n_y \times n_y}$ it holds $[\mathbf{Y}]_{i,j} = (B^*\phi_i, B^*\phi_j)_{L_2(\Omega)} = (\phi_i,\phi_j)_{\cY}$.

The solution procedure thus consists of the following steps:
\begin{compactenum}
	\item Assemble $\mathbf{B^*}$ and $\mathbf{Y}$
	\item Assemble the load vector $\mathbf{f} \in \R^{n_y}$, $[\mathbf{f}]_i := f(\phi_i), i=1,\dots, n_y$
	\item Solve $\mathbf{Y} \mathbf{w} = \mathbf{f}$
	\item Compute $\mathbf{u} = \mathbf{B^*} \mathbf{w}$
\end{compactenum}

\subsection{Assembling the matrices for spaces on rectangular grids} \label{Subsubsec:Assemble_matrices_rectangular}

As a concrete example on how to assemble the matrices $\mathbf{B^*}$ and $\mathbf{Y}$ we consider $\Omega = (0,1)^n$ and use a rectangular grid.
We start with the one-dimensional case as already seen in Example \ref{Ex_Illustration_Trial}. Let thus $\Omega=(0,1)$ and $b>0$. Moreover, let  $\mathcal{T}^h = \{[(i-1)h,ih)\}_{i=1}^{n_h}$ be the uniform one-dimensional grid with mesh size $h=1/n_h$, fix a polynomial order $p \geq 1$, and define
$\cY^{h,p}_{\oneD}, \bar{\cX}^{h,p}_{\oneD}$ as in \eqref{eq:def_cYd_FE}, \eqref{eq:def_bcXd_FE}.
Let $(\phi_1, \dots, \phi_{n_y})$ and $(\psi_1, \dots, \psi_{\bar n_x})$ be the respective nodal bases of $\cY^{h,p}_{\oneD}$ and $\bar \cX^{h,p}_{\oneD}$.

Moreover, let $\mathbf{I}_{\oneD} \in \R^{\bar n_x \times n_y}$ be the matrix representation of the embedding $\text{Id} : \cY^{h,p}_{\oneD} \to \bar\cX^{h,p}_{\oneD}$ in the respective nodal bases, 
i.e., the $i$-th column of $\mathbf{I}_{\oneD}$ contains the coefficients of $\phi_i \in \cY^{h,p}_{\oneD} \subset \bar\cX^{h,p}_{\oneD}$ in the basis $(\psi_1, \dots, \psi_{\bar n_x})$, 
such that for $\mathbf{u} = \mathbf{I}_{\oneD} \cdot \mathbf{w}$ it holds $\sum_{i=1}^{\bar n_x} u_i \psi_i = \sum_{i=1}^{n_y} w_i \phi_i$. Similarly, let $\mathbf{A}_{\oneD} \in \R^{\bar n_x \times n_y}$ be the matrix representation of the differentiation $\frac{d}{dx}: \cY^{h,p}_{\oneD} \to \bar\cX^{h,p}_{\oneD}, w^h \mapsto (w^h)'$. Additionally, as above, we define $\mathbf{M}_{\oneD} \in \R^{\bar n_x \times \bar n_x}, [\mathbf{M}_{\oneD}]_{i,j} = (\psi_i, \psi_j)_{L_2((0,1))}$ as the $L_2$-mass matrix of $\bar \cX^{h,p}_{\oneD}$.
 
For $p=1$, i.e., linear FE, and a standard choice of the nodal bases the matrices $\mathbf{I}_{\oneD}$, $\mathbf{A}_{\oneD}$, and  $\mathbf{M}_{\oneD}$ read
\begin{equation*}
\mathbf{I}_{\oneD} := 
\begin{pmatrix}
1	&0	&0		&\!\!\cdots \\
0	&1	&0		&		\\
0	&1	&0		&		\\
0	&0	&1		&		\\
\vdots&	&	& \!\!\ddots \\
\end{pmatrix},
\;
\mathbf{A}_{\oneD} := \frac{1}{h} \cdot
\begin{pmatrix}
-1	&1	 &0		&\!\!\cdots \\
-1	&1	 &0		&		\\
0		&\!-1 &1&		\\
0		&\!-1 &1&		\\
\vdots&	&	& \!\!\ddots \\
\end{pmatrix},
\;
\mathbf{M}_{\oneD} = h \cdot
\begin{pmatrix}
\sfrac{1}{3}	& \sfrac{1}{6} & 0 &0 & \!\!\cdots \\
\sfrac{1}{6}	& \sfrac{1}{3} & 0 &0 &  \\
0 & 0 & \sfrac{1}{3}	& \sfrac{1}{6} & \\
0 & 0 & \sfrac{1}{6}	& \sfrac{1}{3} & \\
\vdots&	&&	& \!\!\ddots \\
\end{pmatrix}.
\end{equation*} 
With these three matrices we can then compose the matrices $\mathbf{B}^*_{\oneD}$ and $\mathbf{Y}_{\oneD}$ by
\begin{equation*}
\mathbf{B}^*_{\oneD} := - b \cdot \mathbf{A}_{\oneD} + c \cdot \mathbf{I}_{\oneD}, \qquad
\mathbf{Y}_{\oneD} := (\mathbf{B}^*_{\oneD})^T \mathbf{M}_{\oneD}\mathbf{B}^*_{\oneD}. 
\end{equation*}
 
Next, we consider a rectangular domain of higher dimension, e.g., $\Omega = (0,1)^n, n \geq 2$. We choose in each dimension one-dimensional FE spaces $\cY^i, \bar{\cX}^i$, $i=1,\dots,n$ as in \eqref{eq:def_cYd_FE}, \eqref{eq:def_bcXd_FE} separately, and use the tensor product of these spaces $\cYd := \bigotimes_{i=1}^n \cY^i, \bcXd := \bigotimes_{i=1}^n \bar{\cX}^i$ as FE spaces on the rectangular grid formed by a tensor product of all one-dimensional grids. 
The system matrices can then be assembled from Kronecker products of the one-dimensional matrices corresponding to the spaces $\cY^i, \bar{\cX}^i, i=1,\dots, n$: We first assemble for $i=1,\dots,n$ the matrices $\mathbf{I}_{\oneD}^i$ and $\mathbf{A}_{\oneD}^i$ corresponding to the pair of spaces $\cY^i, \bar{\cX}^i$. Then, the matrix corresponding to the adjoint operator can be assembled by
\begin{equation} \label{eq:matrices_ndim}
\mathbf{B}^* := - \sum_{i=1}^n b_i \mathbf{I}_{\oneD}^1 \otimes \cdots \otimes \mathbf{I}_{\oneD}^{(i-1)} \otimes \mathbf{A}_{\oneD}^i \otimes \mathbf{I}_{\oneD}^{(i+1)}
\otimes  \cdots \otimes \mathbf{I}_{\oneD}^n
+ c \bigotimes_{i=1}^n \mathbf{I}_{\oneD}^i,
\end{equation} 
e.g.\ for $n=2$ we have
\begin{equation*}
\mathbf{B}^*_{\twoD} := - b_1 (\mathbf{A}_{\oneD}^1 \otimes \mathbf{I}_{\oneD}^2)
- b_2 (\mathbf{I}_{\oneD}^1 \otimes \mathbf{A}_{\oneD}^2)
+ c (\mathbf{I}_{\oneD}^1 \otimes \mathbf{I}_{\oneD}^2). 
\end{equation*}
Similarly, the mass matrix $\mathbf{M}$ of $\bcXd$ can be computed from the one-dimensional mass matrices $\mathbf{M}_{\oneD}^i$ of $\bar{\cX}^i, i=1,\dots, n$ by
$\mathbf{M} := \bigotimes_{i=1}^n \mathbf{M}_{\oneD}^i$, such that $\mathbf{Y} := (\mathbf{B}^*)^T \mathbf{M}_{\bcXd} \mathbf{B^*}$ can also be directly assembled using the matrices $\mathbf{I}_{\oneD}^i, \mathbf{A}_{\oneD}^i, \mathbf{M}_{\oneD}^i, i=1, \dots, n$.

\subsection{Implementation for non-constant data}\label{subsect:non_const_data}

If the data functions $\vec{b}$ and $c$ are not constant, we do not automatically get a standard FE space $\bcXd$ in which the solution $u^\delta$ can be described, thus the implementation has to be adapted. 
A way to retain the implementation for constant data functions is to approximate the data by piecewise constants on each grid cell. Then, there holds again $u_\delta \in \bcXd$ and we only have to slightly modify the implementation presented in \S \ref{Sec:5_const_data}:
Every nodal basis function $\psi_i \in \bcXd, i=1,\dots, \bar n_x$, has, due to the discontinuous FE space, a support of only one grid cell. 
Denoting by $c^i$ the value of $c$ on the grid cell of $\psi_i$, we define the diagonal matrix $\mathbf{c} \in \R^{\bar n_x \times \bar n_x}, [\mathbf{c}]_{i,i} := c^i$, and, similarly, the matrices $\mathbf{b}_j\in \R^{\bar n_x \times \bar n_x}$ corresponding to $b_j, j=1,\dots, n$.
We then simply change the scalars $b_j$ and $c$ in \eqref{eq:matrices_ndim} to matrices $\mathbf{b}_j$  and $\mathbf{c}$, $j=1,\dots, n$. 

However, a piecewise constant approximation of the functions $\vec{b} \in C^1(\Omega)^n, c \in C^0(\Omega)$ may not lead to a sufficient accuracy of the solution. 
For general $\vec{b} \in C^1(\Omega)^n, c \in C^0(\Omega)$, we thus first assemble the $\cY$-inner product matrix $\mathbf{Y} \in \R^{n_y \times n_y}$ of $\cYd$ and the load vector $\mathbf{f} \in \R^{n_y}$ corresponding to the right-hand side
as in standard FE implementations for elliptic equations, by using e.g.\ Gauss quadratures for the approximation of the integrals. We can then solve \eqref{eq:discrete_reformulated} as above by $\mathbf{w} := \mathbf{Y}^{-1} \mathbf{f}$, $w^\delta := \sum_{i=1}^{n_y} [\mathbf{w}]_i \phi_i \in \cYd$.
To compute the solution $u_\delta \in \cXd$, we use the fact that we still have $w^\delta \in \bcXd$ and $\frac{\partial w^\delta}{\partial x_i}  \in \bcXd, i=1,\dots, n$, and store the corresponding $\bcXd$-coefficients of $w^\delta$ and its derivatives separately, as well as the data functions.
We can then evaluate $u^{\delta} = B^* w^\delta$ for arbitrary $x \in \Omega$ by evaluating all $w^\delta$-dependent functions and all data functions in $x$ and using the definition of $B^*$ to get $u^\delta(x) = - \sum_{i=1}^n b_i(x) \frac{\partial w^\delta}{\partial x_i}(x) + (c - \nabla \cdot \vec{b})(x) w^\delta(x)$.

\section{Numerical experiments}\label{Sec:6}

In this section, we report on results of our numerical experiments. We consider the parametric and the non-parametric case, starting with the latter one. We are particularly interested in quantitative results concerning the rate of approximation for the discrete case as the discretization parameter $\delta$ (see above) approaches zero, quantitative comparisons of the inf-sup constant with existing methods from the literature and the greedy convergence in the parametric case. We report on time-dependent and time-independent test cases. The source code to reproduce all results is provided in \cite{Bru2018zenodoversion3}.

\subsection{Non-parametric cases} 

\subsubsection{Convergence rates for problems with different smoothness} \label{subsubsec:convergence}

As indicated in \S \ref{SubSec:NewApproach}, we can show the convergence of the proposed approximation for appropriate test spaces $\cYd$, but did not derive theoretical rates of convergence in this paper. Therefore, in this subsection we investigate the rate of convergence in numerical experiments. 
In all test cases we use as test space $\cY^\delta$ a continuous FE space on a uniform hexahedral grid. Since we want to investigate here the best possible convergence rates, we choose test cases where the trial space restrictions due to tensor product spaces described in \S \ref{Subsec:Problem_tensor} do \emph{not} lead to additional errors. These cases will then afterwards be compared to cases where the restrictions indeed \emph{do} lead to additional errors in \S \ref{subsubsec:corner_error_numerics}. 

\begin{figure}[tb] 
\begin{minipage}[t]{0.57\textwidth}
\vspace{0pt}
\begin{center}
\captionsetup{width=.9\linewidth}
	\captionof{table}{\oneD: $L_2$-error and convergence rate as $h\to 0$ for linear and quadratic FE spaces.}\label{Tab:1D}
	{\footnotesize
	\begin{tabular}{|r*2{|r|r|}}\hline
	& \multicolumn{2}{|c||}{Linear FE} &  \multicolumn{2}{|c|}{Quadratic FE}  \\ \cline{2-5}
	$1/h$ & $L_2$-error & rate 		& $L_2$-error	& rate		\\ \hline
	4 	  & 0.03311 	& --- 		& 0.00247		& ---		\\ \hline
	8 	  & 0.01664 	& 0.99274  	& 0.00062		& 1.98932	\\ \hline
	16 	  & 0.00833 	& 0.99817 	& 0.00016		& 1.99729	\\ \hline
	32 	  & 0.00417 	& 0.99954	& 3.896e-05		& 1.99932	\\ \hline
	64 	  & 0.00208  	& 0.99989 	& 9.741e-06		& 1.99983	\\ \hline
	128   & 0.00104 	& 0.99997 	& 2.435e-06		& 1.99996	\\ \hline
	256   & 0.00052		& 0.99999 	& 6.088e-07		& 1.99999	\\ \hline
	\end{tabular}}
\end{center}
\end{minipage}
\hfill
\begin{minipage}[t]{0.4\textwidth}
\vspace{0pt}
	{\footnotesize
	\begin{tikzpicture}\hypersetup{hidelinks}
	\definecolor{c1}{RGB}{228,26,28}
	\definecolor{c2}{RGB}{55,126,184}
	\begin{axis}
	[width=170pt]
	\addplot [color=c1, thick] table [x index={0}, y index={1}, col sep=comma]{oned_8.csv};
	\addplot[color=c2, domain=0:1,samples=100, thick]{exp(-2*x)};
	\end{axis}
	\end{tikzpicture}
	}
	\captionsetup{width=.99\linewidth}
	\captionof{figure}{\oneD: $L_2$-approximation vs.\ exact solution for linear FE space with $h=1/8$.} 
	\label{Fig:1D_Plot}
\end{minipage}
\vspace{-1.5em}
\end{figure}

We start with the one-dimensional problem introduced in Example \ref{Ex_Illustration_Trial} and set $\Omega = (0,1), b(x) \equiv 1, c(x) \equiv 2$ with boundary value $u(0)=1$. We compute approximate solutions for linear FE spaces $\cY^h$ (recall Figure \ref{Fig:1D_Basis_Functions} for the corresponding basis functions and see Figure \ref{Fig:1D_Plot} for an illustration of the solution), as well as quadratic FE spaces. We observe an (optimal) convergence rate of 1 for the linear and 2 for the quadratic case (see Table \ref{Tab:1D}).

Next, we consider $\Omega = (0,1)^2$, and choose $\vec{b} \equiv (\cos 30\degree, \sin 30\degree)^T$, $c\equiv 0, f \equiv 0$, and compare boundary values with different smoothness. In detail, we solve
\begin{equation*}
\vec{b} \cdot \nabla u = 0 \quad \text{in } \Omega , \qquad
u = g^i \quad \text{on } \Gamma_{-} = ( \{0\} \times (0,1) ) \cup ( (0,1) \times \{0\} ), \quad i=1,2,3,
\end{equation*}
for the boundary values 
\begin{align}
g^1 \in C^1(\Gamma_-), \quad
g^1(x,0) &\equiv 1, \quad 
g^1(0,y) = \begin{cases}
31.25 y^3 - 18.75 y^2 + 1, & y \leq 0.4 \label{eq:g^1}\\
0 , & y > 0.4,
\end{cases} \\
g^2 \in C^0(\Gamma_-), \quad
g^2(x,0) &\equiv 1, \quad 
g^2(0,y) = \begin{cases}
1, & y < 0.2 \\
2-5y & 0.2 \leq y < 0.4  \label{eq:g^2}\\
0 , & 0.4 \leq y,
\end{cases} \\
g^3 \in L_2(\Gamma_-), \quad 
g^3(x,0) &\equiv 1, \quad 
g^3(0,y) = \begin{cases}
1, & y < 0.25 \\
0 , & 0.25 \leq y. \label{eq:g^3}
\end{cases}
\end{align}
We use second order FE on a uniform rectangular mesh with $n_h=h^{-1}$ cells in both dimensions, i.e., $\delta = (h,h)$. As already mentioned above, the data is chosen such that for all boundary conditions it holds $u(1,1)=0$ for the exact solution, so that we do not observe problems from the nonphysical restriction of the trial space. We observe a convergence of order about $1.65$ for the differentiable case $g=g^1$, an order of $1$ for the continuous case $g=g^2$, and an order of about $1/3$ for the discontinuous boundary $g=g^3$ (see Table \ref{Tab:2D_g^i}).
\begin{table}[bt]
\begin{center}
\caption{$L_2$-errors and convergence rates for two-dimensional problem with boundary values \eqref{eq:g^1}, \eqref{eq:g^2}, and \eqref{eq:g^3}.}\label{Tab:2D_g^i} 
{\footnotesize
\begin{tabular}{|r*3{|r|r|}}\hline
	& \multicolumn{2}{|c||}{$g=g^1 \in C^1(\Gamma_-)$} &  \multicolumn{2}{|c||}{$g=g^2 \in C^0(\Gamma_-)$}  & \multicolumn{2}{|c|}{$g=g^3 \in L_2(\Gamma_-)$}\\ \cline{2-7}
	$1/h$ & $L_2$-error & rate 		& $L_2$-error	& rate		& $L_2$-error	& rate		\\ \hline
	16 	  & 0.00768 	& ---   	& 0.01974		& ---   	& 0.10630		& ---   	\\ \hline
	32 	  & 0.00247 	& 1.63387	& 0.00973		& 1.02096	& 0.08484		& 0.32533	\\ \hline
	64 	  & 0.00079  	& 1.65196 	& 0.00493		& 0.98128	& 0.06764		& 0.32683	\\ \hline
	128   & 0.00025		& 1.65937	& 0.00248		& 0.99302	& 0.05386		& 0.32862   \\ \hline
	256   & 7.872e-05   & 1.66280   & 0.00124		& 0.99476   & 0.04285		& 0.33009	\\ \hline
	512   & 2.483e-05   & 1.66452   & 0.00062		& 0.99636   & 0.03406		& 0.33120	\\ \hline
\end{tabular}}
\end{center}
\vspace{-0.5em}
\end{table}

To assess the effect of a non-constant transport direction on the convergence rate we use $\vec{b}(x,y) = (1-y,x)^T$, which has an $\Omega$-filling flow with $T = \frac{\pi}{2}$, $c\equiv 0$, $f\equiv 0$, and a $C^1$-boundary value $g^4 \in C^1(\Gamma_-)$ as
\begin{equation*}
g^4(x,0) = 0, \quad g^4(0,y) = 
\begin{cases}
256y^4 - 512y^3 + 352y^2 - 96y + 9, & 0.25 \leq x \leq 0.75, \\
0, & \text{else.}
\end{cases}
\end{equation*}
We observe a convergence behavior even slightly better than for the case of constant $\vec{b}$ with a $C^1$-boundary function, see Table \ref{Tab:2D_Circle}; the curved transport is resolved without artifacts, see Figure \ref{Fig:2D_Circle_Plot}.

\begin{figure}[tb]	
\begin{minipage}{0.48\textwidth} \centering
\captionsetup{width=0.8\linewidth}
	\captionof{table}{$L_2$-error and convergence rate for $\vec{b} = (1-y,x)^T $ and $g=g^4$.}\label{Tab:2D_Circle}
	{\footnotesize
		\begin{tabular}{|r|r|r|}\hline
			$1/h$ & $L_2$-error & rate 		\\ \hline
			4 	  & 0.09317 	& --- 		\\ \hline
			8 	  & 0.03329 	& 1.48458  	\\ \hline
			16 	  & 0.01124 	& 1.56702 	\\ \hline
			32 	  & 0.00366 	& 1.61950	\\ \hline
			64 	  & 0.00117  	& 1.64276 	\\ \hline
			128   & 0.00037 	& 1.65386 	\\ \hline
		\end{tabular}}	
	\end{minipage}
	\hfill
	\begin{minipage}{0.45\textwidth}
		\includegraphics[width=\textwidth]{./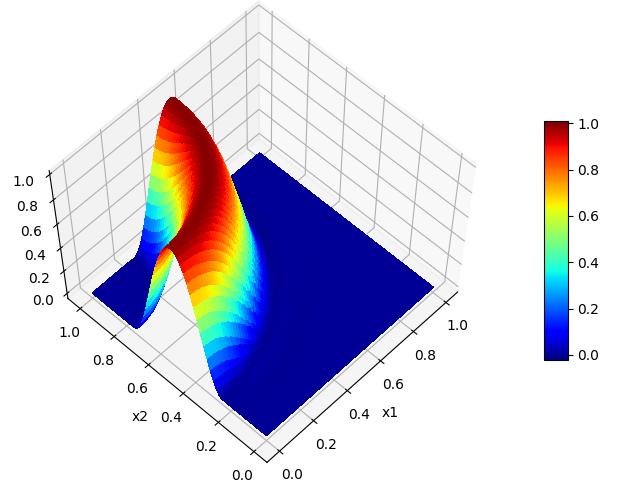}
		\captionsetup{width=.99\linewidth}
		\captionof{figure}{Approximate solution for $\vec{b} = (1-y,x)$, $g=g^4$ and $h=1/32$.} 
		\label{Fig:2D_Circle_Plot}
	\end{minipage}
\vspace{-2.5em}
\end{figure}

\subsubsection{Influence of restrictions due to tensor product spaces} \label{subsubsec:corner_error_numerics}

So far we investigated the convergence of discrete solutions for cases where the nonphysical boundary restrictions described in \S \ref{Subsec:Problem_tensor} do \emph{not} lead to problems. 
Here we want to compare these results to similar test cases where the restriction indeed \emph{is} unphysical, i.e., for the exact solution we have $u \neq 0$ at the relevant outflow boundary part. We again choose $\Omega=(0,1)^2$, $\vec{b} \equiv (\cos 30\degree, \sin 30\degree)^T$, $c \equiv 0,$ and $f \equiv 0$.
We first consider a constant boundary value $\tilde g \equiv 1$, leading to $u \equiv 1$, where the impact of the unphysical restriction can be observed best, since the shifted version $g \equiv 0$ leading to $u \equiv 0$ would of course have no discretization error at all. We compare this to shifted versions of the boundary values considered in \S \ref{subsubsec:convergence}, i.e., $\tilde g^i = g^i - 1, i=1, 2, 3$ for $g^1$, $g^2,$ and $g^3$ defined in \eqref{eq:g^1}-\eqref{eq:g^3}.

\begin{table}[tb]
\caption{$L_2$-errors and convergence rates for two-dimensional problem with different boundary conditions and unphysical restrictions of the trial space.}\label{Tab:2D_corner_restrictions}
\begin{center} 
	{\footnotesize
		\begin{tabular}{|r*4{|r|r|}}\hline
			& \multicolumn{2}{|c||}{$g\equiv 1$} & \multicolumn{2}{|c||}{$g=g^1-1 \in C^1(\Gamma_-)$} &  \multicolumn{2}{|c||}{$g=g^2-1 \in C^0(\Gamma_-)$}  & \multicolumn{2}{|c|}{$g=g^3-1 \in L_2(\Gamma_-)$}\\ \cline{2-9}
			$1/h$ & $L_2$-error & rate 		& $L_2$-error & rate 	& $L_2$-error	& rate		& $L_2$-error	& rate		\\ \hline
			16 	  & 0.01280 	& ---   	& 0.01479 	  & ---     & 0.02627		& ---   	& 0.10618		& ---   	\\ \hline
			32 	  & 0.00676 	& 0.92191	& 0.00691 	  & 1.09798	& 0.01281		& 1.03615	& 0.08515		& 0.31838	\\ \hline
			64 	  & 0.00355 	& 0.92883	& 0.00349  	  & 0.98507 & 0.00616		& 1.05729	& 0.06773		& 0.33028	\\ \hline
			128   & 0.00186 	& 0.93469	& 0.00183	  & 0.92944	& 0.00292		& 1.07500	& 0.05389		& 0.32963	\\ \hline
			256   & 0.00097 	& 0.93973	& 0.00097     & 0.92081 & 0.00149       & 0.97073   & 0.04286       & 0.33058   \\ \hline
			512   & 0.00050     & 0.94411   & 0.00050     & 0.94099 & 0.00081	    & 0.88878   & 0.03406       & 0.33141   \\ \hline
		\end{tabular}}
\end{center}
\vspace{-0.5em} 
	\end{table}

In the constant case $g \equiv 1$ we have a convergence of order $\approx \!1$ (see Table \ref{Tab:2D_corner_restrictions}). Comparing Tables \ref{Tab:2D_g^i} and \ref{Tab:2D_corner_restrictions}, we see that indeed the restriction leads to an additional error that converges with order 1: While the problem for the $C^1$-boundary value $g^1$ converges with an order of about $1.65$, the shifted problem for $\tilde g^1 = g^1-1$ converges only with an order of $\approx\!1$. For the less smooth boundaries $\tilde g^2 \in C^0(\Gamma_-)$ and $\tilde g^3 \in L_2(\Gamma_-)$ we see that the convergence order stays the same, and thus the full error is not dominated by the restriction artifacts. 
All in all, for the present test cases, the restriction due to the tensor product structure limits the convergence rate to 1, but does not deteriorate smaller convergence orders for less smooth problems, such that for these problems the additional error is negligible. Recall that we are primarily interested in such non-smooth solutions in $L_2(\Omega)$.

Next, we investigate the approach proposed in \S \ref{Subsec:Problem_tensor} to use an additional layer for the computational domain. In detail, we extend the data functions onto the larger domain $\Omega(\alpha)$ defined in \eqref{eq:extended_domain}, solve the problem for a discrete solution $u^\delta_{\Omega(\alpha)} \in L_2(\Omega(\alpha))$ on this extended problem, and then define the restriction $u^\delta_{\Omega(\alpha)}|_{\Omega} \in L_2(\Omega)$ as discrete solution to the original problem.

We consider constant boundary values $g \equiv 1$. For each discrete space $\cY^\delta, \delta = (h,h)$, we compare values of $\alpha = mh$, $m=1,\dots, 5$, i.e., we extend the domain by 1 to 5 layers of grid cells of the original size. The $L_2$- and $L_\infty$-errors of these solutions and the respective solutions computed on the original domain $\Omega$ are shown in \ref{Fig:Ext_dom_one_errors}. 
We see that using extended domains for the computation reduces the $L_2$-errors: A first layer of grid cells has the most significant effect, but also larger extensions further reduce the errors. Since the difference is larger for coarser meshes, the $L_2$-rates are slightly lower than for the original solution, which improves however for finer mesh sizes. We obtained similar results for the boundary values $g = g^{1} - 1$. Moreover, the extended domain approach has a positive impact on the $L_{\infty}$-error of the solution and thus on the ``optical quality'': While for the computations on $\Omega$ we automatically have an $L_{\infty}$-error of $1$ for all mesh sizes, the error is reduced to values between about $0.16$ for $\alpha=h$ and $0.05$ for $\alpha=5h$; also the $L_{\infty}$-error on the extended domains seems to be relatively independent of the mesh size (see \ref{Fig:Ext_dom_one_errors}). A comparison of the solution computed on $\Omega$ and $\Omega(h)$ is provided in \ref{Fig:2D-g_const-std-ext}.

We conclude from these experiments that for the current test case the use of an extended domain slightly reduces the $L_2$-error while maintaining comparable convergence rates and considerably reduces the $L_\infty$-error at the boundary.
Hence, at the expense of (moderate) additional computational cost a better approximation of the solution on the outflow boundary can be achieved.

\begin{figure}
\begin{center}
\begin{tikzpicture}[scale=0.85] \hypersetup{hidelinks}
\pgfplotsset{
  log x ticks with fixed point/.style={
      xticklabel={
        \pgfkeys{/pgf/fpu=true}
        \pgfmathparse{exp(\tick)}%
        \pgfmathprintnumber[fixed relative, precision=3]{\pgfmathresult}
        \pgfkeys{/pgf/fpu=false}
      }
  },
  log y ticks with fixed point/.style={
      yticklabel={
        \pgfkeys{/pgf/fpu=true}
        \pgfmathparse{exp(\tick)}%
        \pgfmathprintnumber[fixed relative, precision=3]{\pgfmathresult}
        \pgfkeys{/pgf/fpu=false}
      }
  }
}
		\definecolor{c1}{RGB}{228,26,28}
		\definecolor{c2}{RGB}{55,126,184}
		\definecolor{c3}{RGB}{77,175,74}
		\definecolor{c4}{RGB}{152,78,163}
		\definecolor{c5}{RGB}{255,127,0}
		\definecolor{c6}{RGB}{166,86,40}
		\begin{loglogaxis}
		[width=160pt, xlabel=$h^{-1}$, ylabel=$L_2$-error, legend style={at={(1.03,0.5)}, anchor=west}, xtick={16, 32, 64, 128, 256, 512}, log x ticks with fixed point,]
		\addplot[color=c1, thick, solid, mark=*] table [x index={0}, y index={1}, col sep=comma]{ext_grid_one_layer_comp.csv};
		\addplot[color=c2, thick, solid, mark=square] table [x index={0}, y index={2}, col sep=comma]{ext_grid_one_layer_comp.csv};
		\addplot[color=c3, thick, solid, mark=triangle*] table [x index={0}, y index={3}, col sep=comma]{ext_grid_one_layer_comp.csv};
		\addplot[color=c4, thick, solid, mark=diamond] table [x index={0}, y index={4}, col sep=comma]{ext_grid_one_layer_comp.csv};
		\addplot[color=c5, thick, solid, mark=pentagon*] table [x index={0}, y index={5}, col sep=comma]{ext_grid_one_layer_comp.csv};
		\addplot[color=c6, thick, solid, mark=asterisk] table [x index={0}, y index={6}, col sep=comma]{ext_grid_one_layer_comp.csv};
		\end{loglogaxis}
\end{tikzpicture}
\begin{tikzpicture}[scale=0.85] \hypersetup{hidelinks}
\pgfplotsset{
  log x ticks with fixed point/.style={
      xticklabel={
        \pgfkeys{/pgf/fpu=true}
        \pgfmathparse{exp(\tick)}%
        \pgfmathprintnumber[fixed relative, precision=3]{\pgfmathresult}
        \pgfkeys{/pgf/fpu=false}
      }
  },
  log y ticks with fixed point/.style={
      yticklabel={
        \pgfkeys{/pgf/fpu=true}
        \pgfmathparse{exp(\tick)}%
        \pgfmathprintnumber[fixed relative, precision=3]{\pgfmathresult}
        \pgfkeys{/pgf/fpu=false}
      }
  }
}
		\definecolor{c1}{RGB}{228,26,28}
		\definecolor{c2}{RGB}{55,126,184}
		\definecolor{c3}{RGB}{77,175,74}
		\definecolor{c4}{RGB}{152,78,163}
		\definecolor{c5}{RGB}{255,127,0}
		\definecolor{c6}{RGB}{166,86,40}
		\begin{loglogaxis}
		[width=160pt, xlabel=$h^{-1}$, ylabel=$L_\infty$-error, legend style={at={(1.03,0.5)}, anchor=west}, xtick={16, 32, 64, 128, 256, 512}, log x ticks with fixed point,]
		\addplot[color=c1, thick, solid, mark=*] table [x index={0}, y index={1}, col sep=comma]{ext_grid_one_layer_comp_l_infty.csv};
		\addlegendentry{Standard};
		\addplot[color=c2, thick, solid, mark=square] table [x index={0}, y index={2}, col sep=comma]{ext_grid_one_layer_comp_l_infty.csv};
		\addlegendentry{$\alpha = h$};
		\addplot[color=c3, thick, solid, mark=triangle*] table [x index={0}, y index={3}, col sep=comma]{ext_grid_one_layer_comp_l_infty.csv};
		\addlegendentry{$\alpha = 2h$};
		\addplot[color=c4, thick, solid, mark=diamond] table [x index={0}, y index={4}, col sep=comma]{ext_grid_one_layer_comp_l_infty.csv};
		\addlegendentry{$\alpha = 3h$};
		\addplot[color=c5, thick, solid, mark=pentagon*] table [x index={0}, y index={5}, col sep=comma]{ext_grid_one_layer_comp_l_infty.csv};
		\addlegendentry{$\alpha = 4h$};
		\addplot[color=c6, thick, solid, mark=asterisk] table [x index={0}, y index={6}, col sep=comma]{ext_grid_one_layer_comp_l_infty.csv};
		\addlegendentry{$\alpha = 5h$};
		\end{loglogaxis}
\end{tikzpicture}
\end{center}
\caption{2D, $g \equiv 1$, $L_2$-errors (left) and $L_\infty$-errors (right) for solutions computed on the standard domain $\Omega = \Omega(0)$ and on extended domains $\Omega(\alpha)$, $\alpha = mh$, $m=1,\dots,5$ for different mesh sizes. \label{Fig:Ext_dom_one_errors}}
\vspace{-1.5em}
\end{figure}
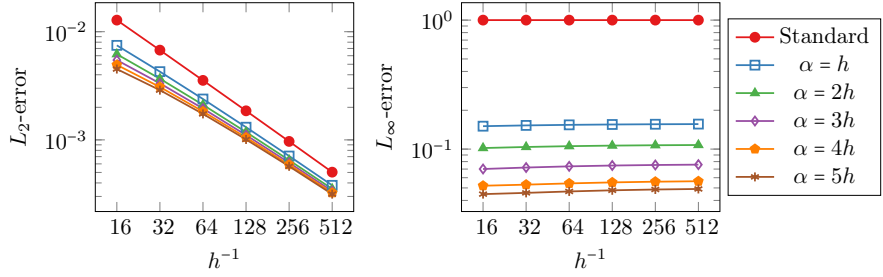 

\begin{figure}[tb]
\centering
	\includegraphics[width=0.9\textwidth]{./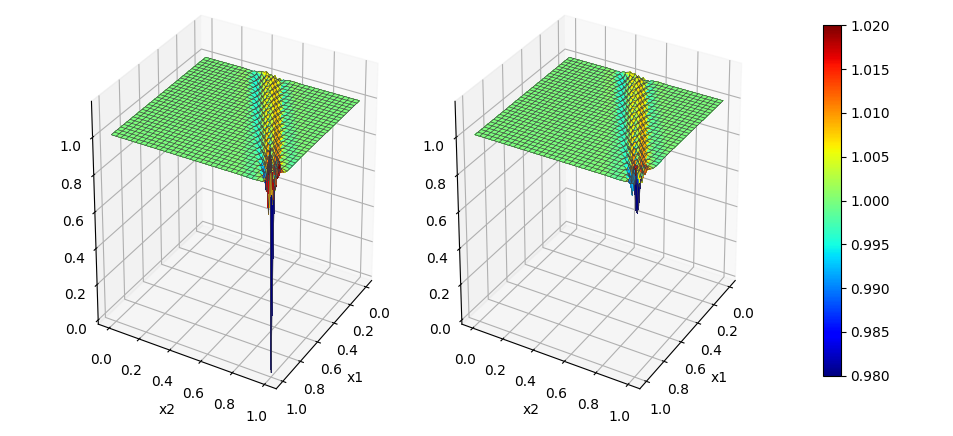}
	\caption{2D, Numerical approximation for $h=1/32$, $g\equiv 1$. Left: Standard domain $\Omega$. Right: $u^\delta|_{\Omega}$ solved on extended domain $\Omega(h)$}\label{Fig:2D-g_const-std-ext}
\vspace{-1.5em}
\end{figure}

\subsubsection{Assessment of post-processing procedure}

We next compare the approximation of discontinuities of a standard solution $u^\delta \in \cXd$ to the post-processed solution $\tilde u^\delta$ described in \S\ref{Subsec:Post-processing}. 
To this end, we again consider the example in \S \ref{subsubsec:convergence} with boundary value $g^3 \in L_2(\Gamma_-)$ that is piecewise constant with a discontinuity.
Note that the choice of a constant advection $\vec{b}$ and no reaction simplifies the post-processing procedure, such that the post-processed solution $\tilde u^\delta$ directly is the $L_2$-orthogonal projection of $u^\delta$ onto the discontinuous first order FE space.  Comparing the errors of $u^\delta$ and $\tilde u^\delta$ (see Tables \ref{Tab:2D_g^i} and \ref{Tab:Post_processed}), we see that the errors for the post-processed solutions are by about 8\% smaller than for the standard solutions, while the order of convergence stays the same. Figure \ref{Fig:Standard_Post-processed} shows that the post-processing removes the severe overshoots of the standard solution at the jump discontinuity. 
We also note that the post-processing is computationally inexpensive, since it is only based upon local multiplications of an element projection matrix for each grid cell. A comparison of the computational costs will be given in \S \ref{subsec:comparison_Dahmen_numerics}.
\begin{figure}[tb]
	\begin{minipage}{0.3\textwidth} \centering
		\captionsetup{width=\linewidth}
		\captionof{table}{$L_2$-error and convergence rate for post-processed solution $\tilde u^\delta$ for boundary $g^3$.}\label{Tab:Post_processed}
		{\footnotesize
		\begin{tabular}{|r|r|r|}\hline
			$1/h$ & $L_2$-error & rate 		\\ \hline
			16 	  & 0.09769 	& ---    	\\ \hline
			32 	  & 0.07765		& 0.33128	\\ \hline
			64 	  & 0.06179  	& 0.32946 	\\ \hline
			128   & 0.04917 	& 0.32965 	\\ \hline
			256   & 0.03911		& 0.33042	\\ \hline
			512   & 0.03108		& 0.33123	\\ \hline
		\end{tabular}}			
	\end{minipage}
	\hfill
	\begin{minipage}{0.69\textwidth}
		\includegraphics[trim= 50 0 90 0, clip, width=\textwidth]{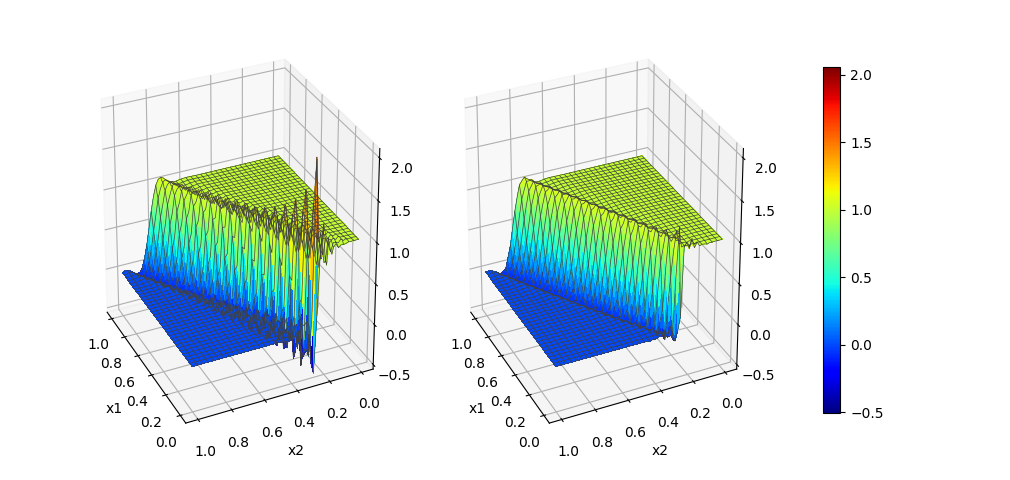}
		\captionsetup{width=.99\linewidth}
		\captionof{figure}{Standard solution $u^\delta$ (left) and post-processed solution $\tilde u^\delta$ (right) for boundary $g^3$ and $h=1/32$.} 
		\label{Fig:Standard_Post-processed}
	\end{minipage}
\vspace{-1.5em}
\end{figure}

\subsubsection{Comparison to approach proposed in \cite{DHSW2012}} \label{subsec:comparison_Dahmen_numerics}

Next, we compare the results of our method, which we call \emph{Optimal Trial} method, 
with the related approach in \cite{DHSW2012}, which we call \emph{Optimal Test} method. 
We use the same test case as in \cite{DHSW2012}, i.e., we set $\Omega = (0,1)^2$, $\vec{b} \equiv (\cos 22.5\degree, \sin 22.5\degree)^T$, $c\equiv 0$, and $f \equiv 0$. For the boundary condition we again have the discontinuous boundary value $g = g^3$ defined in \eqref{eq:g^3}.

While our \emph{Optimal Trial} approach consists of choosing a test space $\cYd \subset \cY$ which automatically determines the trial space $\cXd = B^* \cYd \subset L_2(\Omega)$ and the corresponding linear system in $\cYd$, 
for the \emph{Optimal Test} method in \cite{DHSW2012} one chooses a trial space $\widehat{\mathcal{X}}^\delta\subset L_2(\Omega)$ and a larger test search space $\mathcal{Z}^\delta \subset \cY$, i.e., $\cYd\subset\mathcal{Z}^\delta$. The optimally stable problem in $\widehat{\mathcal{X}}^\delta \times (B^*)^{-1} \widehat{\mathcal{X}}^\delta$ is then substituted by the problem in $\widehat{\mathcal{X}}^\delta \times P_{\mathcal{Z}^\delta}((B^*)^{-1} \widehat{\mathcal{X}}^\delta)$, which in turn is solved \emph{approximately} by an Uzawa algorithm. 
Within this algorithm, one iteratively solves problems in the test space $\mathcal{Z}^\delta$, which are in fact based upon the same bilinear form as for \eqref{eq:discrete_reformulated} to be solved in $\cYd$ in the \emph{Optimal Trial} method. We therefore choose the spaces such that $\cYd = \mathcal{Z}^\delta$, which means that the same matrix has to be assembled for both methods.  
More precisely, we choose for the \emph{Optimal Trial} method the same spaces as in the experiments above, i.e., $\cYd$ is the space of continuous FE of second order on a rectangular grid with mesh size $\delta = (h,h)$. Fitting to that, we choose -- as proposed in \cite{DHSW2012} -- for $\widehat{\mathcal{X}}^\delta$ the space of discontinuous bilinear FE on a rectangular grid with mesh size $(2h, 2h)$, and $\mathcal{Z}^\delta = \cYd$, such that here the grid for the test search space results from one uniform refinement of the grid of the trial space.

\begin{figure}[tb]
\begin{center}
	\footnotesize{
\begin{tikzpicture} \hypersetup{hidelinks}
	\definecolor{c1}{RGB}{228,26,28}
	\definecolor{c2}{RGB}{55,126,184}
	\definecolor{c3}{RGB}{77,175,74}
	\definecolor{c4}{RGB}{152,78,163}
	\definecolor{c5}{RGB}{255,127,0}
	\begin{loglogaxis}[width=185pt, legend style={at={(1.03,0.5)}, anchor=west}, xlabel=CPU time, ylabel=$L_2$-error]
	\addplot [color=c1, mark=*, mark size=1.5pt, thick] table [x index={1}, y index={2}, col sep=comma]{cpu_factorized_umfpack.csv};
	\addlegendentry{\emph{Opt.\ Test} 1;}
	\addplot [color=c2, mark=square, mark size=1.5pt, thick] table [x index={3}, y index={4}, col sep=comma]{cpu_factorized_umfpack.csv};
	\addlegendentry{\emph{Opt.\ Test} 5;}
	\addplot [color=c3, mark=triangle*, thick] table [x index={5}, y index={6}, col sep=comma]{cpu_factorized_umfpack.csv};
	\addlegendentry{\emph{Opt.\ Trial};}
	\addplot [color=c4, mark=diamond, thick] table [x index={7}, y index={8}, col sep=comma]{cpu_factorized_umfpack.csv};
	\addlegendentry{\emph{Opt.\ Trial} post-proc.}
\end{loglogaxis}
\end{tikzpicture}
}
\end{center}
\caption{$L_2$-errors versus CPU-times for the \emph{Optimal Test} method with 1 iteration (\emph{Opt.\ Test} 1) and 5 iterations (\emph{Opt.\ Test} 5) of the Uzawa algorithm, and for the \emph{Optimal Trial} method in standard (\emph{Opt.\ Trial}) and post-processed (\emph{Opt.\ Trial} post-proc.) form.}
\label{Figure:CPU_Errors}
\vspace{-2em} 
\end{figure}
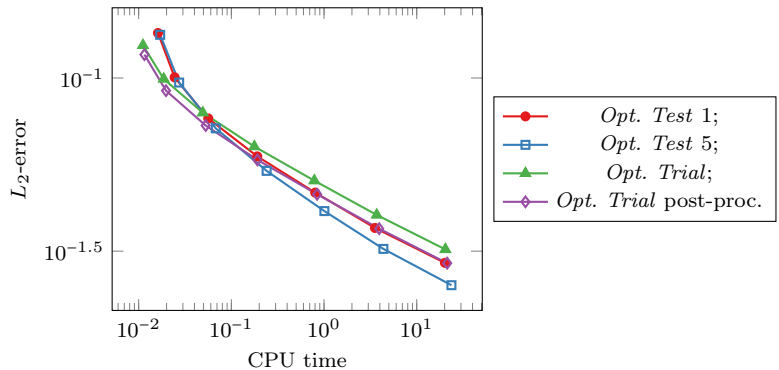

We first compare the relation of $L_2$-errors and CPU times for both methods. For the solution of the linear systems, we always use sparse LU factorization and subsequent forward and back substitution implemented in UMFPACK.
Figure \ref{Figure:CPU_Errors} shows the respective CPU-error plots for the \emph{Optimal Test} method using 1 iteration and 5 iterations of the Uzawa algorithm (as proposed in \cite{DHSW2012}) and for the standard solution of the \emph{Optimal Trial} method as well as the post-processed solution described in \S \ref{Subsec:Post-processing}.
We observe similar decay rates of the errors w.r.t.\ the CPU times for both methods.
For the chosen linear solver, the \emph{Optimal Test} methods with 5 iterations performs best, which is mainly due to the fact that assembly of the matrices and LU factorization dominate the computational costs. Therefore, the costs for 5 Uzawa iterations are only slightly higher than for e.g.\ only 1 Uzawa iteration, while the errors are reduced significantly.     
If we use iterative methods, e.g.\ the CG method, instead, the results depend on the used preconditioner: If the computation of the preconditioner dominates, the results are similar to the results using LU decomposition. In contrast, if the iterative solver takes as much or more time than the preconditioner, then the \emph{Optimal Test} solutions using 5 Uzawa iterations would take considerably more time compared to the other solutions and we speculate that the post-processed \emph{Optimal Trial} solution might perform fairly equally to the \emph{Optimal Test} solutions. However, a comparison of different preconditioners is out of the scope of this paper.
 
Finally, we compare the inf-sup constants of both methods. While for the \emph{Optimal Trial} method we automatically have an inf-sup constant of 1, this is not the case for the \emph{Optimal Test} method. Since here not the truly optimal test space $(B^*)^{-1} \widehat{\mathcal{X}}^\delta$, but the projection onto the test search space $P_{\mathcal{Z}^\delta}((B^*)^{-1} \widehat{\mathcal{X}}^\delta)$ is used for the discrete test space, the inf-sup constant for the discrete problem as well as for the corresponding saddle-point problem on which the Uzawa iteration is based is suboptimal. Table \ref{Tab:Inf_Sup_2D} and Table \ref{Tab:Inf_Sup_3D} show the inf-sup constants for the considered two-dimensional problem, i.e., $\Omega = (0,1)^2$, $\vec{b} = (\cos 22.5\degree, \sin 22.5\degree)^T$, $c\equiv 0$ and the corresponding time-dependent problem, i.e., a three-dimensional problem with 
$\Omega = (0,1)^3$ and $\vec{b} = (1, \cos 22.5\degree, \sin 22.5\degree)^T$, respectively. We clearly see that the inf-sup constants decrease with smaller mesh sizes, in both cases they decay roughly with an order of $h^{1/3}$.

\begin{table}[tb]
\centering
	\begin{minipage}{0.4\linewidth}
		\captionsetup{width=\linewidth}
		\captionof{table}{Inf-sup constants for the \emph{Optimal Test} method and the 2D problem} 
		\label{Tab:Inf_Sup_2D}
		\centering
		{\footnotesize
		\begin{tabular}{|r|r|}
			\hline
			1/(2h)  & Inf-sup \\\hline
			4   &  0.74521 \\\hline
			8   &  0.66426 \\\hline
			16  &  0.55840 \\\hline
			32  &  0.45422 \\\hline
			64  &  0.36029 \\\hline
			128 &  0.28273 \\\hline
			256 &  0.21901 \\\hline
		\end{tabular}}
	\end{minipage}
\hspace{0.6cm}
	\begin{minipage}{0.4\linewidth}
		\captionsetup{width=\linewidth}
		\captionof{table}{Inf-sup constants for the \emph{Optimal Test} method and the 3D problem} 
		\label{Tab:Inf_Sup_3D}
		\centering
		{\footnotesize
		\begin{tabular}{|r|r|}
			\hline
			1/(2h)  & Inf-sup \\\hline
			4   &  0.64800 \\\hline
			8   & 0.60160 \\\hline
			16  & 0.48294 \\\hline
			32  & 0.38015 \\\hline
		\end{tabular}}
	\end{minipage}
\vspace{-1.5em}
\end{table}

\subsection{Parametric cases: The reduced basis method}
\label{subsec:num_rb}

\hspace{-0.5mm}To examine our method in the parametric setting, we consider three different test cases. For all cases, we choose $\Omega = (0,1)^2$ and a parametrized constant transport direction $\vec{b}_\mu \in \R^2, \mu \in \cP$, such that $\Gamma_- = (\{0\} \times (0,1) ) \cup ( (0,1) \times \{0\} )$ for all $\mu \in \cP$ as well as parameter-independent reaction, source and boundary data, see Table \ref{Tab:ParaCase}.
Again, we want to solve for all $\mu \in \cP$
\begin{equation*}
\vec{b}_\mu \cdot \nabla u +cu = f \quad \text{in }\Omega,
\qquad
u=g \quad \text{on }\Gamma_{-}.
\end{equation*}

\begin{table}[tb]
\caption{\label{Tab:ParaCase}Data for parametric test cases.}
\begin{center}
\begin{tabular}{|c|l|l|l|}\hline
	 & Test Case 1 (see \cite{OR16}) &  Test Case 2 (cf.\ \cite{DaPlWe14})  & Test Case 3 (cf.\ \cite{DaPlWe14})\\ \hline\hline
	$\vec{b}_\mu$ & $(\mu, 1)^T$ 	& $(\cos\mu, \sin\mu)^T$ 	& $(\cos\mu, \sin\mu)^T$ 	\\ \hline
	$\cP$ 		& $[0.01, 1]$ 	& $[0.2, \frac{\pi}{2}-0.2]$	& $[0.2, \frac{\pi}{2}-0.2]$ \\ \hline
	$c$			& $ \equiv 0$	& $\equiv 1$			& $\equiv 1$  \\ \hline
	$f$			& $\equiv 0$	& $\equiv 1$			& $\begin{cases}0.5,&x<y\\1,&x\geq y\end{cases}$ \\ \hline
	$g$			& $\begin{cases}1,&x=0\\0,&y=0\end{cases}$ 
				& $\equiv 0$
				& $\begin{cases}1-y,&x\leq 0.5\\0, & x \geq 0.5\end{cases}$ \\ \hline
\end{tabular}
\end{center}
\vspace{-0.5em}
\end{table}

For all test cases, we choose a training set of 500 equidistant parameter values distributed over $\cP$ and set $\varepsilon=10^{-4}$. We then generate reduced models with Algorithm \ref{alg:strong greedy} for different mesh sizes. The maximum model errors $\|u^N(\mu)-u^\delta(\mu)\|_{L_2(\Omega)}$ on an additional test set of 500 uniformly distributed random parameter values are shown in Figure \ref{Fig:RB_results}.
\begin{figure}[tb]
	\pgfplotsset{
		compat=1.11,
		legend image code/.code={
			\draw[mark repeat=2,mark phase=2]
			plot coordinates {
				(0cm,0cm)
				(0.15cm,0cm)        
				(0.3cm,0cm)         
			};%
		}
	}
\begin{center}
	\footnotesize{
		\begin{tikzpicture} \hypersetup{hidelinks}
		\definecolor{c1}{RGB}{228,26,28}
		\definecolor{c2}{RGB}{55,126,184}
		\definecolor{c3}{RGB}{77,175,74}
		\definecolor{c4}{RGB}{152,78,163}
		\definecolor{c5}{RGB}{255,127,0}
		\definecolor{c6}{RGB}{166,86,40}
		\begin{loglogaxis}
		[width=135pt, title=Test Case 1, xlabel=$N$, ylabel=$L_2$-model error]
		\addplot[color=c1, thick, densely dotted] table [x index={0}, y index={1}, col sep=comma]{RB1_greedy_test_equid_500_test_random_500.csv};
		\addplot[color=c2, thick, densely dashdotted] table [x index={0}, y index={2}, col sep=comma]{RB1_greedy_test_equid_500_test_random_500.csv};
		\addplot[color=c3, thick, solid] table [x index={0}, y index={3}, col sep=comma]{RB1_greedy_test_equid_500_test_random_500.csv};
		\addplot[color=c4, thick, densely dotted] table [x index={0}, y index={4}, col sep=comma]{RB1_greedy_test_equid_500_test_random_500.csv};
		\addplot[color=c5, thick, densely dashdotted] table [x index={0}, y index={5}, col sep=comma]{RB1_greedy_test_equid_500_test_random_500.csv};
		\addplot[color=c6, thick, solid] table [x index={0}, y index={6}, col sep=comma]{RB1_greedy_test_equid_500_test_random_500.csv};
		\addplot[red, thick, domain=1:50, samples=200,]{x^(-1/2)};
		\label{ord_05}
		\node [draw,fill=white,font=\tiny] at (rel axis cs: 0.28,0.2) {\shortstack[l]{
				\ref{ord_05} $\sim N^{-1/2}$}};
		\end{loglogaxis}
		\end{tikzpicture}
		\begin{tikzpicture} \hypersetup{hidelinks}
		\definecolor{c1}{RGB}{228,26,28}
		\definecolor{c2}{RGB}{55,126,184}
		\definecolor{c3}{RGB}{77,175,74}
		\definecolor{c4}{RGB}{152,78,163}
		\definecolor{c5}{RGB}{255,127,0}
		\definecolor{c6}{RGB}{166,86,40}
		\begin{loglogaxis}
		[width=135pt, title=Test Case 2, xlabel=$N$]
		\addplot [color=c1, thick, densely dotted] table [x index={0}, y index={1}, col sep=comma]{RB2_greedy_test_equid_500_test_random_500.csv};
		\addplot [color=c2, thick, densely dashdotted] table [x index={0}, y index={2}, col sep=comma]{RB2_greedy_test_equid_500_test_random_500.csv};
		\addplot [color=c3, thick, solid] table [x index={0}, y index={3}, col sep=comma]{RB2_greedy_test_equid_500_test_random_500.csv};
		\addplot [color=c4, thick, densely dotted] table [x index={0}, y index={4}, col sep=comma]{RB2_greedy_test_equid_500_test_random_500.csv};
		\addplot [color=c5, thick, densely dashdotted] table [x index={0}, y index={5}, col sep=comma]{RB2_greedy_test_equid_500_test_random_500.csv};
		\addplot [color=c6, thick, solid] table [x index={0}, y index={6}, col sep=comma]{RB2_greedy_test_equid_500_test_random_500.csv};
		\addplot[red, thick, domain=1:50, samples=200,]{1^(3/2)*x^(-3/2)};
		\node [draw,fill=white,font=\tiny] at (rel axis cs: 0.28,0.2) {\shortstack[l]{
				\ref{ord_05} $\sim N^{-3/2}$}};	
		\end{loglogaxis}
		\end{tikzpicture}
		\begin{tikzpicture} \hypersetup{hidelinks}
		\definecolor{c1}{RGB}{228,26,28}
		\definecolor{c2}{RGB}{55,126,184}
		\definecolor{c3}{RGB}{77,175,74}
		\definecolor{c4}{RGB}{152,78,163}
		\definecolor{c5}{RGB}{255,127,0}
		\definecolor{c6}{RGB}{166,86,40}
		\begin{loglogaxis}
		[width=135pt, legend columns=-1, legend to name=legendname, title={Test Case 3}, xlabel=$N$]
		\addplot [color=c1, densely dotted, thick] table [x index={0}, y index={1}, col sep=comma]{RB3_greedy_test_equid_500_test_random_500.csv};
		\addlegendentry{$h^{-1}=16$;}
		\addplot [color=c2, thick, densely dashdotted] table [x index={0}, y index={2}, col sep=comma]{RB3_greedy_test_equid_500_test_random_500.csv};
		\addlegendentry{$h^{-1}=32$;}
		\addplot [color=c3, thick, solid] table [x index={0}, y index={3}, col sep=comma]{RB3_greedy_test_equid_500_test_random_500.csv};
		\addlegendentry{$h^{-1}=64$;}
		\addplot [color=c4, thick, densely dotted] table [x index={0}, y index={4}, col sep=comma]{RB3_greedy_test_equid_500_test_random_500.csv};
		\addlegendentry{$h^{-1}=128$;}
		\addplot [color=c5, thick, densely dashdotted] table [x index={0}, y index={5}, col sep=comma]{RB3_greedy_test_equid_500_test_random_500.csv};
		\addlegendentry{$h^{-1}=256$;}
		\addplot [color=c6, thick] table [x index={0}, y index={6}, col sep=comma]{RB3_greedy_test_equid_500_test_random_500.csv};
		\addlegendentry{$h^{-1}=512$}
		\addplot[red, thick, domain=2:80, samples=200,]{2*x^(-1)};
		\node [draw,fill=white,font=\tiny] at (rel axis cs: 0.28,0.2) {\shortstack[l]{
				\ref{ord_05} $\sim N^{-1}$}};
		\end{loglogaxis}
		\end{tikzpicture}
		\centering
		\begin{tikzpicture} 
		\hypersetup{hidelinks}
		\definecolor{c1}{RGB}{228,26,28}
		\definecolor{c2}{RGB}{55,126,184}
		\definecolor{c3}{RGB}{77,175,74}
		\definecolor{c4}{RGB}{152,78,163}
		\definecolor{c5}{RGB}{255,127,0}
		\definecolor{c6}{RGB}{166,86,40}
		\ref{legendname}
		\end{tikzpicture}
	}
\end{center}
	\caption{Maximum errors of 500 test parameter values for different model orders, mesh sizes, and Test Cases 1, 2, and 3.
	\label{Fig:RB_results}}
\vspace{-1.5em} 
\end{figure}
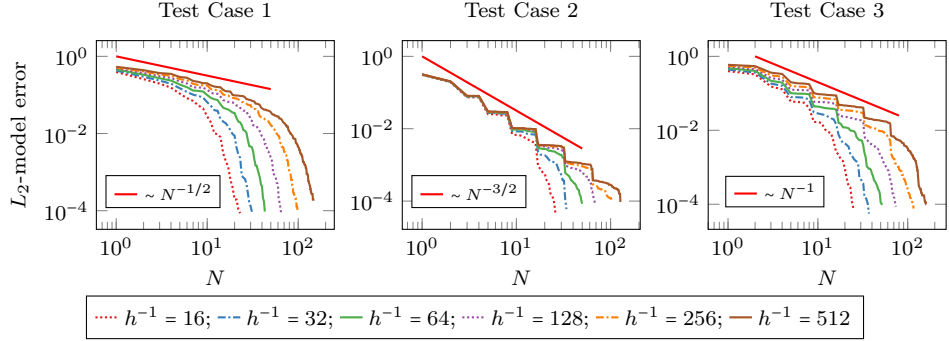

Since we did not derive theoretical convergence results for the greedy algorithm, we investigate the convergence behavior numerically. To that end, we first consider a test case where the best-possible convergence rate of linear approximations is known: 
In \cite{OR16}, it is shown that the Kolmogorov $N$-width of the solution set of Test Case 1 decays with an order of $N^{-1/2}$. 
In the corresponding results of our greedy algorithm, we indeed observe the same (and thus optimal) convergence behavior, see Figure \ref{Fig:RB_results}.

In Test Case 2 we choose constant reaction and source terms that lead to more regular solutions. Here, the greedy algorithm shows a faster convergence of order about $N^{-3/2}$. With discontinuous source and boundary data in Test Case 3 we finally observe an order of roughly $N^{-1}$. 

Similar experiments were also performed in \cite{DaPlWe14}, where reduced models are built by the so-called Double Greedy algorithm that chooses reduced trial spaces and uses additional loops to find stabilized reduced test spaces (of larger dimension).
To realize a fair comparison with our approach, we also implemented a ``strong'' Double Greedy algorithm using the model error instead of a surrogate in \cite[Algorithm 4]{DaPlWe14}. For the full solutions we use the discretization of the \emph{Optimal Test} method described in Section  \ref{subsec:comparison_Dahmen_numerics}.
We then run the ``strong'' variant of the Double Greedy algorithm \cite[Algorithm 5]{DaPlWe14} for Test Case 3 on a training set of 500 equidistant parameter values distributed over $\cP$ and with tolerance $\varepsilon = 0.01$ comparing different thresholds $\beta_{min}$ for the inf-sup stability of the reduced spaces\footnote{In \cite{DaPlWe14} it is proposed to use $\beta_{min} := \zeta \beta_{\delta}$, where $0 < \beta_{\delta} \leq 1$ is a lower bound of the discrete inf-sup constants of the full discretizations for all $\mu \in \cP$ and some $0 < \zeta < 1$, such that the desired threshold is guaranteed to be achievable for all reduced spaces. Here, we simply compare different values of $\beta_{min} < 1$ without computing $\beta_{\delta}$.}. 

The resulting maximum model errors for 500 test parameter values are shown in Figure \ref{Fig:DouGreNErr}. For the smaller stability thresholds of $0.3$ and $0.6$ we observe slight instabilities while for a threshold of $0.7$ the maximum model errors are decreasing for increasing model orders. 
Comparing the approximation properties of the trial spaces of the Double Greedy and Optimal Trial Greedy method, we see that for model orders up to 32 the Double Greedy trial spaces lead to smaller errors than the Optimal Trial spaces of same dimension, while for larger model orders the Optimal Trial reduced spaces perform better.

Since, unlike the new method, for the Double Greedy method the test spaces are significantly larger (for Test Case 3, $\beta_N \geq 0.7$, approximately by a factor of 3) than the trial spaces, the test space dimensions are essential for the online complexity of the reduced saddle point problems.  
In Figure \ref{Fig:DouGreCPU} online computation times for both methods are shown, where we use for the Double Greedy solutions a reformulation of the saddle point problem where the inversion of a test space sized matrix dominates the costs\footnote{Directly solving the larger linear system of size (trial space dim.)+(test space dim.) corresponding to the saddle point formulation leads to comparable results.}. 
We clearly see that the Optimal Trial reduced models outperform the Double Greedy models both when comparing the same trial space dimensions and the same model errors\footnote{Note, however, that as usual online computation times contain only the computation of the coefficients of the reduced solutions in the respective reduced basis. If an assembly of the full-dimensional solution vector is needed, this dominates the costs and is clearly faster for the Double Greedy models, since for the Optimal Trial method the separate parts of the affine decomposition of the trial space have to be assembled, and the trial space vector is usually larger.}.

These results show that for the rather challenging Test Case 3 the Optimal Trial method leads to comparable and for larger model orders even better approximation properties for the same dimension of the trial spaces and to faster online computation times than the Double Greedy method. We note that for smoother cases, e.g.\ Test Case 2, the Optimal Trial models show the same, but not better convergence order than the Double Greedy models.

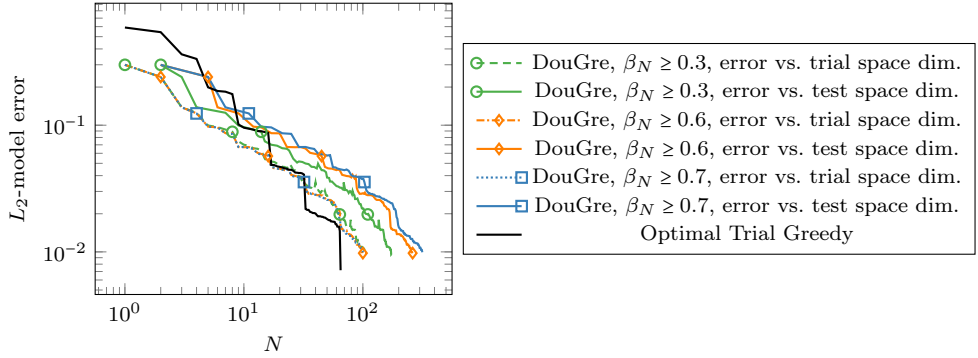
\begin{figure}
\footnotesize{
\begin{tikzpicture} \hypersetup{hidelinks}
	\definecolor{c1}{RGB}{228,26,28}
	\definecolor{c2}{RGB}{55,126,184}
	\definecolor{c3}{RGB}{77,175,74}
	\definecolor{c4}{RGB}{152,78,163}
	\definecolor{c5}{RGB}{255,127,0}
	\definecolor{c6}{RGB}{166,86,40}
	\definecolor{c7}{RGB}{247,129,191}
	\definecolor{c8}{RGB}{153,153,153}
	\begin{loglogaxis}
	[width=180pt, xlabel=$N$,ylabel=$L_2$-model error, legend style={
		at={(1.03,0.5)},
		anchor=west}
	]
\addplot+ [color=c3, thick, densely dashed, mark=o, mark indices={1, 8, 64}, mark options=solid] table [x index={0}, y index={2}, col sep=comma]{RB3_dou_gre_test_model_n_256_test_random_500_beta_min_0.3.csv};
	\addlegendentry{DouGre, $\beta_N \geq 0.3$, error vs.\ trial space dim.};
	\addplot+ [color=c3, thick, solid, mark=o, mark indices={1, 8, 64}, mark options=solid] table [x index={1}, y index={2}, col sep=comma]{RB3_dou_gre_test_model_n_256_test_random_500_beta_min_0.3.csv};
	\addlegendentry{DouGre, $\beta_N \geq 0.3$, error vs.\ test space dim.};
\addplot+ [color=c5, thick, densely dashdotted, mark=diamond, mark indices={2, 16, 100}, mark options=solid] table [x index={0}, y index={2}, col sep=comma]{RB3_dou_gre_test_model_n_256_test_random_500_beta_min_0.6.csv};
	\addlegendentry{DouGre, $\beta_N \geq 0.6$, error vs.\ trial space dim.};
	\addplot+ [color=c5, thick, solid, mark=diamond, mark indices={2, 16, 100,}] table [x index={1}, y index={2}, col sep=comma]{RB3_dou_gre_test_model_n_256_test_random_500_beta_min_0.6.csv};
	\addlegendentry{DouGre, $\beta_N \geq 0.6$, error vs.\ test space dim.};
\addplot+[color=c2, thick, densely dotted, mark=square, mark indices={4, 32, 256}, mark options=solid, legend image post style={mark indices={3}}] table [x index={0}, y index={2}, col sep=comma]{RB3_dou_gre_test_model_n_256_test_random_500_beta_min_0.7.csv};
	\addlegendentry{DouGre, $\beta_N \geq 0.7$, error vs.\ trial space dim.};
	\addplot+[color=c2, thick, solid, mark=square, mark indices={4, 32, 256}, legend image post style={mark indices={3}}] table [x index={1}, y index={2}, col sep=comma]{RB3_dou_gre_test_model_n_256_test_random_500_beta_min_0.7.csv};
	\addlegendentry{DouGre, $\beta_N \geq 0.7$, error vs.\ test space dim.};
\addplot[color=black, thick, solid] table [x index={0}, y index={2}, col sep=comma]{RB3_greedy_timing_n_512_equid_500_n_solves_5000.csv};
		\addlegendentry{Optimal Trial Greedy}
\end{loglogaxis}
\end{tikzpicture}
}
\caption{Test Case 3, $h^{-1} = 512$. Maximum errors of 500 test parameter values for reduced models from Algorithm \ref{alg:strong greedy} (Optimal Trial Greedy) and the strong Double Greedy (DouGre) Algorithm with different lower inf-sup bounds, plots of maximum error versus trial space dimension and test space dimension, respectively. \label{Fig:DouGreNErr}}
\vspace{-1em}
\end{figure}

\begin{figure}
\footnotesize{
\begin{tikzpicture}[scale=0.85] \hypersetup{hidelinks}
	\definecolor{c1}{RGB}{228,26,28}
	\definecolor{c2}{RGB}{55,126,184}
	\definecolor{c3}{RGB}{77,175,74}
	\definecolor{c4}{RGB}{152,78,163}
	\definecolor{c5}{RGB}{255,127,0}
	\definecolor{c6}{RGB}{166,86,40}
	\begin{loglogaxis}
	[width=150pt, xlabel=$N$, ylabel=Computation time (s)]
		\addplot[color=black, thick, solid] table [x index={0}, y index={1}, col sep=comma]{RB3_greedy_timing_n_512_equid_500_n_solves_5000.csv};
%
		\addplot[color=c2, thick, densely dashdotted] table [x index={0}, y index={1}, col sep=comma]{RB3_dou_gre_timing_n_256_beta_min_0.7_n_solves_5000.csv};
	\end{loglogaxis}
\end{tikzpicture}
\begin{tikzpicture}[scale=0.85] \hypersetup{hidelinks}
	\definecolor{c1}{RGB}{228,26,28}
	\definecolor{c2}{RGB}{55,126,184}
	\definecolor{c3}{RGB}{77,175,74}
	\definecolor{c4}{RGB}{152,78,163}
	\definecolor{c5}{RGB}{255,127,0}
	\definecolor{c6}{RGB}{166,86,40}
	\begin{loglogaxis}
	[width=150pt, xlabel=Computation time (s), ylabel=$L_2$-model error,legend style={
		at={(1.03,0.5)},
		anchor=west}]
		\addplot[color=black, thick, solid] table [x index={1}, y index={2}, col sep=comma]{RB3_greedy_timing_n_512_equid_500_n_solves_5000.csv};
		\addlegendentry{Optimal Trial Greedy};
		\addplot[color=c2, thick, densely dashdotted] table [x index={1}, y index={2}, col sep=comma]{RB3_dou_gre_timing_n_256_beta_min_0.7_n_solves_5000.csv};
		\addlegendentry{Double Greedy, $\beta_N \geq 0.7$};
	\end{loglogaxis}
\end{tikzpicture}
\caption{Test Case 3, $h^{-1} = 512$. Comparison of online computation times (median of 5000 runs) for reduced models from Optimal Trial Greedy Algorithm and strong Double Greedy Algorithm with $\beta_N \geq 0.7$. Left: Computation time versus trial space dimension, right: maximum model error versus computation time. \label{Fig:DouGreCPU} }
}
\vspace{-2em} 
\end{figure}
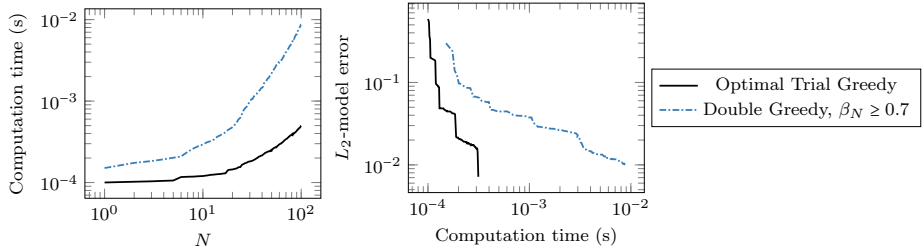

Finally, to test the hierarchical error estimator described in \S \ref{Subsec:RB_Error_Analysis}, we use Test Case 2 with mesh size $\delta = (h,h)$, $h^{-1} = 512$. For the reduced space $Y^N$,  we choose a greedy basis with tolerance $\varepsilon = 10^{-2}$, which here corresponds to $N=13$. For the error estimator reference space $Y^M\supset Y^N$, we compare spaces with tolerances $\varepsilon = 10^{-2.5}, 10^{-3}, 10^{-3.5},$ and $10^{-4}$, leading to $M = 31, 62, 91,$ and 127, respectively. The results in Figure \ref{Fig:Hier} show the quantitative good performance. Note, that the values of $M$ are significantly larger than reported for the hierarchical error estimator in \cite{HORU2018} which is due to the fact that $M$ is determined differently and transport problems are not considered there.
\begin{figure}[tb]
	\pgfplotsset{
		compat=1.11,
		legend image code/.code={
			\draw[mark repeat=2,mark phase=2]
			plot coordinates {
				(0cm,0cm)
				(0.15cm,0cm)        
				(0.3cm,0cm)         
			};%
		}
	}
\begin{center}
	\footnotesize{
	\begin{tikzpicture}[scale=0.85] \hypersetup{hidelinks}
	\definecolor{c1}{RGB}{228,26,28}
	\definecolor{c2}{RGB}{55,126,184}
	\definecolor{c3}{RGB}{77,175,74}
	\definecolor{c4}{RGB}{152,78,163}
	\definecolor{c5}{RGB}{255,127,0}
	\begin{semilogyaxis}[width=150pt, xlabel=$\mu$,ylabel=Model Error, domain=0.2:pi/2-0.2]
		\addplot [color=c1, thick] table [x index={0}, y index={1}, col sep=comma]{rb2_hier_table_n_512_equid_500_test_equid_midpoints_500.csv};
	\end{semilogyaxis}
	\end{tikzpicture}
	\begin{tikzpicture}[scale=0.85]
	\definecolor{c1}{RGB}{228,26,28}
	\definecolor{c2}{RGB}{55,126,184}
	\definecolor{c3}{RGB}{77,175,74}
	\definecolor{c4}{RGB}{152,78,163}
	\definecolor{c5}{RGB}{255,127,0}
	\begin{axis}[width=150pt, xlabel=$\mu$,ylabel=Efficiency, domain=0.2:pi/2-0.2,
	legend entries={$\varepsilon=10^{-2.5}$,$\varepsilon=10^{-3}$,$\varepsilon=10^{-3.5}$,$\varepsilon=10^{-4}$},
	legend style={
		at={(1.03,0.5)},
		anchor=west}
	]
	\addplot [color=c2, thick] table [x index={0}, y index={2}, col sep=comma]{rb2_hier_table_n_512_equid_500_test_equid_midpoints_500.csv};
	\addplot [color=c5, thick] table [x index={0}, y index={3}, col sep=comma]{rb2_hier_table_n_512_equid_500_test_equid_midpoints_500.csv};
	\addplot [color=c3, thick] table [x index={0}, y index={4}, col sep=comma]{rb2_hier_table_n_512_equid_500_test_equid_midpoints_500.csv};
	\addplot [color=c1, thick] table [x index={0}, y index={5}, col sep=comma]{rb2_hier_table_n_512_equid_500_test_equid_midpoints_500.csv};
	\end{axis}
	\end{tikzpicture}
}
\end{center}
	\caption{Test Case 2, $h^{-1} = 512$. Model errors $\|u^N-u^\delta\|_{L_2(\Omega)}$ for all test parameter values (left) and ratios of estimated and real model errors $\|u^N-u^M\|_{L_2(\Omega)}/ \|u^N-u^\delta\|_{L_2(\Omega)}$ (right).
	\label{Fig:Hier}}
\vspace{-1.5em}
\end{figure}
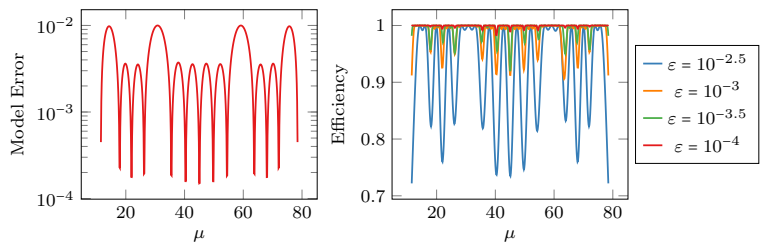

\section{Conclusions}\label{Sec:7}

In this work, we presented a Petrov-Galerkin method for (parametrized) transport equations leading to a computationally feasible optimally stable numerical scheme that is easy to implement. 

Numerical experiments show convergence of order about 1/3 for non-smooth $L_2$-solutions. Despite the $L_2$-framework, higher convergence orders between 1 and 2 can be observed for smooth solutions, even though tensor product discrete spaces may limit the convergence order to 1 due to unphysical restrictions of the trial space at the outflow boundary. 
The proposed method shows similar ratios of errors and computational costs to \cite{DHSW2012}, where fixed trial spaces are used. We thus conclude that our non-standard problem-dependent trial spaces have satisfying approximation properties for the considered test cases. 

Moreover, the framework allows for an efficient realization and implementation of reduced basis methods for parametrized transport equations while ensuring optimal stability for full and reduced spaces.
The suggested (strong) greedy algorithm realizes the convergence order of the Kolmogorov n-width for a non-smooth transport problem.
A comparison with the algorithm in \cite{DaPlWe14} that uses fixed trial spaces and therefore 
needs additional stabilization techniques
shows comparable, or even better convergence rates and significantly lower online costs for the new framework. The results suggest that the new framework might be especially beneficial for problems where a stabilization is rather challenging.

\section*{Acknowledgments}
We are grateful to Gerrit Welper for providing us with additional information to \cite{DaPlWe14}.

\appendix

\section{Proof of Proposition \ref{Prop:Poincare_B1_B2}} \label{Proof:Poincare_B1_B2}

We first give a more formal definition of an $\Omega$-filling flow. To that end, let the flow associated with the vector field $\vec{b} \in C^1(\bar{\Omega})^n$ be described by the integral curves $\xi : (s, x) \in [\sigma_x, \tau_x] \times \bar \Omega \to \xi(s,x) \in \bar \Omega$ that solve
\begin{equation*}
\frac{d \xi}{ds} = \vec{b}(\xi), \quad \xi(0, x) = x.
\end{equation*}

\begin{definition}[{$\Omega$-filling flow, \cite{Azerad1996}}] \label{def:omega_filling_xi}
Let $\vec{b} \in C^1(\bar{\Omega})^n$, then the associated flow is called $\Omega$-filling, if there exists $T > 0$ such that for almost every $x \in \bar \Omega$ there exist $x_0 \in \Gamma_-$ and $0 \leq t \leq T$ such that
\begin{equation*}
x = \xi(t, x_0).
\end{equation*}
\end{definition}

Similar to  \cite[Lem.\ 7]{Azerad1996} we show the following lemma. 
\begin{lemma} \label{lem:rho_var_3}
If the flow associated with $\vec{b}$ is $\Omega$-filling, then there exists $\rho \in L_\infty(\Omega)$ 
such that
\begin{equation} \label{eq:def_rho}
\begin{aligned}
\vec{b} \cdot \nabla\rho = 2 \quad \text{in } \Omega, \quad \text{and} \quad \rho = 0 \quad \text{on } \Gamma_-.
\end{aligned}
\end{equation}
Moreover, we have $\|\rho\|_{L_\infty(\Omega)} \leq 2 T$ and $\rho \geq 0$ almost everywhere in $\Omega$.
\end{lemma}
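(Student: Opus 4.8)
The plan is to construct $\rho$ explicitly by the method of characteristics, following the lines of \cite[Lem.\ 7]{Azerad1996}. For a point $x\in\bar\Omega$ reachable from the inflow boundary, let $\tau(x)\ge 0$ denote the \emph{backward exit time}, i.e.\ the value of $s\ge 0$ for which $\xi(-s,x)\in\Gamma_-$; equivalently, $\tau(x)$ is the forward travel time for which $x=\xi(\tau(x),x_0)$ with $x_0=\xi(-\tau(x),x)\in\Gamma_-$. I would then set
\[
\rho(x) := 2\,\tau(x).
\]
The $\Omega$-filling assumption (Definition \ref{def:omega_filling_xi}) guarantees that for almost every $x\in\bar\Omega$ such a representation exists with $0\le\tau(x)\le T$, so $\rho$ is defined almost everywhere. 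Uniqueness of $\tau(x)$ — and hence well-definedness of $\rho$ — follows from the uniqueness of integral curves of the $C^1$ field $\vec{b}$: distinct characteristics cannot cross, so the backward trajectory from $x$ meets $\Gamma_-$ at a single first time.

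The bounds are then immediate from the construction. Since $0\le\tau(x)\le T$ almost everywhere, we obtain $\rho\ge 0$ a.e.\ and $\|\rho\|_{L_\infty(\Omega)}=2\,\|\tau\|_{L_\infty(\Omega)}\le 2T$, which also shows $\rho\in L_\infty(\Omega)$ once measurability of $\tau$ is established. Measurability I would obtain from the continuous dependence of the flow $\xi$ on its data together with the regularity of $\vec{b}$, exhibiting $\tau$ as a hitting time of the closed set $\Gamma_-$ under the continuous flow. The boundary condition $\rho=0$ on $\Gamma_-$ holds by definition, as $\tau\equiv 0$ there.

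Finally, to verify the transport equation I would differentiate $\rho$ along the flow. Because the backward exit time obeys the cocycle relation $\tau(\xi(s,x))=\tau(x)+s$ (from the group property $\xi(s,\xi(\tau(x),x_0))=\xi(s+\tau(x),x_0)$), we have $\rho(\xi(s,x))=2(\tau(x)+s)$, so $\tfrac{d}{ds}\rho(\xi(s,x))=2$; applying the chain rule $\tfrac{d}{ds}\rho(\xi(s,x))\big|_{s=0}=\nabla\rho(x)\cdot\vec{b}(x)$ at $s=0$ yields $\vec{b}\cdot\nabla\rho=2$ along each characteristic. The main obstacle is that $\rho$, being only $L_\infty$, need not be differentiable where the exit time jumps (characteristics emanating from distinct boundary pieces); I would therefore phrase the identity $\vec{b}\cdot\nabla\rho=2$ in the weak sense of the transport operator, justifying it by testing against $C^1_{\Gamma_+}(\Omega)$-functions and integrating along the flow via the change of variables $x=\xi(t,x_0)$, whose Jacobian is bounded and positive. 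This is precisely the form of the identity needed for the subsequent integration-by-parts argument establishing the curved Poincar\'{e} inequality \eqref{eq:curved_Poincare}.
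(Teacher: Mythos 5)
Your proof follows essentially the same route as the paper's: define $\rho(x)=2t$ via the travel time along the characteristic reaching $x$ from $\Gamma_-$, read off $\rho\ge 0$, $\|\rho\|_{L_\infty(\Omega)}\le 2T$ and $\rho|_{\Gamma_-}=0$ directly from $0\le t\le T$, and obtain $\vec{b}\cdot\nabla\rho=2$ by differentiating along the flow. If anything you are more careful than the paper, which applies the chain rule pointwise even though $\rho$ is in general only in $L_\infty(\Omega)$ (as its own footnote concedes); your observation that the transport identity should be interpreted weakly, by testing and integrating along characteristics, addresses exactly that point without changing the argument.
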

\begin{proof}
The function $\rho$ can be found by the method of characteristics: Since the flow associated with $\vec{b}$ is $\Omega$-filling, for almost every $x \in \Omega$, there exist $x_0 \in \Gamma_-$ and $0 \leq t \leq T$ with $x = \xi(t, x_0)$. Define $\rho(x) = 2t$. Since $0 \leq t \leq T$, we get $\rho \in L_\infty(\Omega)$\footnote{$\rho$ is in general not continuous: Consider e.g.\ a non-convex domain $\Omega$ where a characteristic curve is tangential to the boundary at some (isolated) $x \in \Gamma_0$, but not in a neighborhood of $x$. Then $\rho$ is discontinuous along the characteristic curve starting from $x$.}, 
$\|\rho\|_{L_\infty(\Omega)} \leq 2T$, and $\rho \geq 0$ almost everywhere in $\Omega$.
By definition, for $x_0 \in \Gamma_-$ we have $\xi(0, x_0) = x_0$, i.e.\ $\rho(x_0) = 0$, which means $\rho|_{\Gamma_-} = 0$.  
Furthermore it holds for almost every $x \in \Omega$
\begin{equation*}
\begin{aligned}
\vec{b(x)} \cdot \nabla \rho(x) 
&= \vec{b}(\xi(t, x_0)) \cdot \nabla \rho(\xi(t, x_0)) 
= \frac{d}{dt} \xi(t, x_0) \cdot \nabla \rho(\xi(t, x_0)) \\
&= \frac{d}{dt} \rho(\xi(t, x_0)) 
= \frac{d}{dt} 2t 
= 2,
\end{aligned}
\end{equation*} 
i.e., $\rho$ fulfills \eqref{eq:def_rho}.
\end{proof}

With these preliminaries, we can now give the proof of Proposition \ref{Prop:Poincare_B1_B2}.

\begin{proof}[Proof of Proposition \ref{Prop:Poincare_B1_B2}] 
We first show \eqref{eq:curved_Poincare}, i.e., $\|v\|_{L_2(\Omega)} \leq C \|B^*_0 v\|_{L_2(\Omega)}$. 
Let thus $v \in C^1_{\Gamma_+}(\Omega)$. If condition (i) holds, we can slightly adapt the proof of \cite[Thm.\ 1]{AzePou96}: Let $\rho$ be given as in Lemma \ref{lem:rho_var_3}. Then,  
\begin{align*}
(B^*_{\circ} v, \rho v)_{L_2(\Omega)} 
&= (-\vec{b} \cdot \nabla v + v(c - \nabla \cdot \vec{b}), \rho v)_{L_2(\Omega)} \\
&= - \int_{\Omega} \vec{b} \cdot \nabla v \rho v dx + \int_{\Omega} v^2 \rho (c - \nabla \cdot \vec{b}) dx \\
&= - \int_{\Omega} \tfrac{1}{2} \rho \vec{b} \cdot \nabla v^2 dx + \int_{\Omega} v^2 \rho (c - \nabla \cdot \vec{b}) dx \\
&= \int_{\Omega} \tfrac{1}{2} \nabla \cdot (\rho \vec{b}) v^2 dx + \int_{\Omega} v^2 \rho (c - \nabla \cdot \vec{b}) dx,
\end{align*}
where we have no boundary integral from the partial integration since the traces of $v$ on $\Gamma_+$ and of $\rho$ on $\Gamma_-$ vanish. Further we obtain
\begin{equation}\label{eq:B_adjvrhov}
(B^*_{\circ} v, \rho v)_{L_2(\Omega)} 
= \int_{\Omega} v^2  (\tfrac{1}{2} \underbrace{\vec{b} \cdot \nabla \rho}_{=2} + \underbrace{\rho}_{\geq 0} \underbrace{ (c - \tfrac{1}{2} \nabla \cdot \vec{b} )}_{\geq 0}  ) dx 
\geq \|v\|_{L^2(\Omega)}^2.
\end{equation}
Using $\|\rho v\|_{L_2(\Omega)} \leq \|\rho\|_{L_\infty(\Omega)} \|v\|_{L_2(\Omega)} \leq 2T \|v\|_{L_2(\Omega)}$ we have
\begin{equation}  \label{eq:Badjpoinc_1}
\|B^*_\circ v\|_{L_2(\Omega)} \geq \|\rho v\|_{L_2(\Omega)}^{-1} (B^*_\circ v, \rho v)_{L_2(\Omega)} \geq \frac{1}{2T} \|v\|_{L_2(\Omega)}. 
\end{equation}
For condition (ii), i.e., $c - \tfrac{1}{2} \nabla \cdot \vec{b} \geq \kappa > 0$, we obtain by integration by parts (see \cite[Lem.\ 3.1.1]{Welper2013})
\begin{align} \nonumber
(B^*_{\circ} v, v)_{L_2(\Omega)}
&= \int_{\Omega} -v \vec{b} \cdot \nabla v dx 
+ \int_{\Omega} v^2 (c-\nabla \cdot b) dx \\\nonumber
&= - \tfrac{1}{2} \int_{\Omega} v \vec{b} \cdot \nabla v dx
+ \tfrac{1}{2} \int_{\Omega} v \vec{b} \cdot \nabla v + v^2 \nabla \cdot \vec{b}  dx
- \tfrac{1}{2} \int_{\Gamma_-} v^2 \vec{b} \cdot \vec{n} ds \\\nonumber
&\quad+ \int_{\Omega} v^2 (c-\nabla \cdot b) dx \\
&= \int_{\Omega} v^2 (c- \tfrac{1}{2}\nabla \cdot b) dx 
- \tfrac{1}{2} \int_{\Gamma_-} v^2 \underbrace{\vec{b} \cdot \vec{n}}_{< 0} ds 
\geq \kappa \|v\|_{L^2(\Omega)}^2 \label{eq:est_aadj_v_v}
\end{align}
and thus 
\begin{equation*}
\|B^*_{\circ} v\|_{L_2(\Omega)} \geq \kappa \|v\|_{L^2(\Omega)},
\end{equation*}
i.e., \eqref{eq:curved_Poincare} holds for both cases.

Since \eqref{eq:curved_Poincare} implies injectivity of $B^*_\circ$ on $C^1_{\Gamma_+}(\Omega)$, which is dense in $L_2(\Omega)$, Assumption (B1) is fulfilled.

To prove Assumption (B2), we slightly modify the proof of \cite[Thm.\ 16]{Azerad1996}. To prove density of $\ran(B^*_\circ)$ in $L_2(\Omega)$, we take $w \in L_2(\Omega)$ that is orthogonal to $\ran(B^*_\circ)$ and show $w \equiv 0$. We thus have
\begin{equation*}
(B^*_\circ v, w)_{L_2(\Omega)} = 0 \quad \forall v \in C^1_{\Gamma_+}(\Omega).
\end{equation*}
Let at first $v \in C^1_0(\Omega)$. We then have
\begin{align} \label{eq:surj_pi}
0 
= \int_{\Omega} - \vec{b} \cdot \nabla v w  + (c - \nabla \cdot \vec{b}) v w dx 
= \int_{\Omega} - \nabla \cdot (\vec{b}v) w + cvw dx
\end{align}
By partial integration we see that $\vec{b} \cdot \nabla w + cw$ is a distribution of order 1 with
\begin{equation*}
\langle \vec{b} \cdot \nabla w + cw, v \rangle = 0,
\end{equation*}
which already means $\vec{b} \cdot \nabla w + cw  = 0$, i.e., $\vec{b} \cdot \nabla w  = - cw \in L_2(\Omega)$. 
Therefore, since $w \in H(\Omega, \vec{b}) := \{\phi \in L_2(\Omega): \vec{b} \cdot \nabla \phi \in L_2(\Omega) \}$ and $\partial \Omega$ is piecewise $C^1$, we can consider the trace $w|_{\Gamma_-} \in L_{2,loc}(\Gamma_-, |\vec{b} \cdot \vec{n}|)$ (see \cite[Prop.\ I.1]{GeyLey1987}). Let now $v \in C^1_{\Gamma_+}(\Omega)$. We then obtain from partial integration of \eqref{eq:surj_pi}, using $\vec{b} \cdot \nabla w + cw  = 0$ and $v|_{\Gamma_+} = 0$ that
\begin{equation*}
\int_{\Gamma_-} v w \vec{b} \cdot \vec{n} ds = 0.
\end{equation*}
Since $v$ is arbitrary on $\Gamma_-$ and $\vec{b} \cdot \vec{n} < 0$ on $\Gamma_-$ we thus have $w|_{\Gamma_-} = 0$. 

We now consider the curved Poincar\'{e} inequality \eqref{eq:curved_Poincare} for the (non-adjoint) operator $B_\circ z = \vec{b} \cdot \nabla z + cz$: By setting $\vec{b}_1 = -\vec{b}$ and $c_1 = c - \nabla \cdot \vec{b}$, \eqref{eq:curved_Poincare} reads
\begin{equation} \label{eq:curved_Poincare_dual}
\|- \vec{b}_1 \cdot \nabla z + (c_1 - \nabla \cdot \vec{b}_1)z\|_{L_2(\Omega)}
= \|\vec{b} \cdot \nabla z + cz \|_{L_2(\Omega)} 
\geq C_1\|z\|_{L_2(\Omega)} 
\quad \forall z \in C^1_{\Gamma_-}(\Omega),
\end{equation}
as $\Gamma_-$ is the outflow boundary for $\vec{b}_1 = -\vec{b}$. Since $C^1_{\Gamma_-}(\Omega)$ is dense in $\{\phi \in L_2(\Omega): \vec{b} \cdot \nabla \phi \in L_2(\Omega), \phi|_{\Gamma_-} = 0\}$, \eqref{eq:curved_Poincare_dual} we obtain $0 = \|\vec{b} \cdot \nabla w + cw \|_{L_2(\Omega)} \geq C_1 \|w\|_{L_2(\Omega)}$, and thus $w=0$.
Hence, also (B2) is fulfilled.  
\end{proof}

\section{Examples for conditions leading to well-posedness of the variational formulation}
\label{sec:counterexamples}
In Proposition \ref{Prop:Poincare_B1_B2} we give conditions on the data functions such that the corresponding operator fulfills Assumption \ref{lemma:B12}.

Considering a bounded, polyhedral domain $\Omega \subset \R^n$, $n>1$ with Lipschitz boundary that consists of finitely many polyhedral faces again having Lipschitz boundaries, the authors of \cite{DHSW2012} albeit claim in Remark 2.2(i) that the assumption $0 \neq \vec{b} \in C^1(\Omega)^{n}$ is already sufficient for Assumption \ref{lemma:B12}.
Here, we want to give counterexamples to that claim showing that the more stringent condition of an $\Omega$-filling flow given in condition (i) of Proposition \ref{Prop:Poincare_B1_B2} is indeed necessary.

To that end, we consider $\Omega \subset \R^2$ with advection field $\vec{b}(x,y) = (-y, x)$ and no reaction $c \equiv 0$. It holds $\nabla \cdot \vec{b} = 0$, thus, the adjoint operator is simply $B^*_\circ v = -\vec{b} \cdot \nabla v$.

The easiest example is an annular domain 
\begin{equation*}
\Omega_1 = \{(x,y) \in \R^2: 0.25 < x^2 + y^2 < 1 \} 
\end{equation*}
(which is however not polyhedral, see Figure \ref{Im:2D_domains}, left). It holds $\vec{b} \neq 0$ on $\overline{\Omega}_1$. The boundary has the form of two circles: $\Gamma = \partial \Omega_1 = \{(x,y) \in \R^2: x^2 + y^2 = 0.25 \} \cup \{(x,y) \in \R^2: x^2 + y^2 = 1 \}$, outward normal is $\vec{n} = (x,y)$ on $\{x^2+y^2 = 1\}$ and $\vec{n} = -2(x,y)$ on $\{x^2+y^2=0.25\}$. Since $\vec{b} \cdot \vec{n} = (-y, x) \cdot C (x,y) = C(-xy + xy) = 0$ for a constant $C \in \mathbb{R}$, $C\neq 0$, the whole boundary belongs to $\Gamma_0$. Therefore, $v \equiv 1 \in C^1_{\Gamma_+}(\Omega_1)$ but $\vec{b} \cdot \nabla v = 0$, i.e.\ $B^*_\circ$ is not injective on $C^1_{\Gamma_+}(\Omega_1)$.

\begin{figure}
\includegraphics[width=0.8\textwidth]{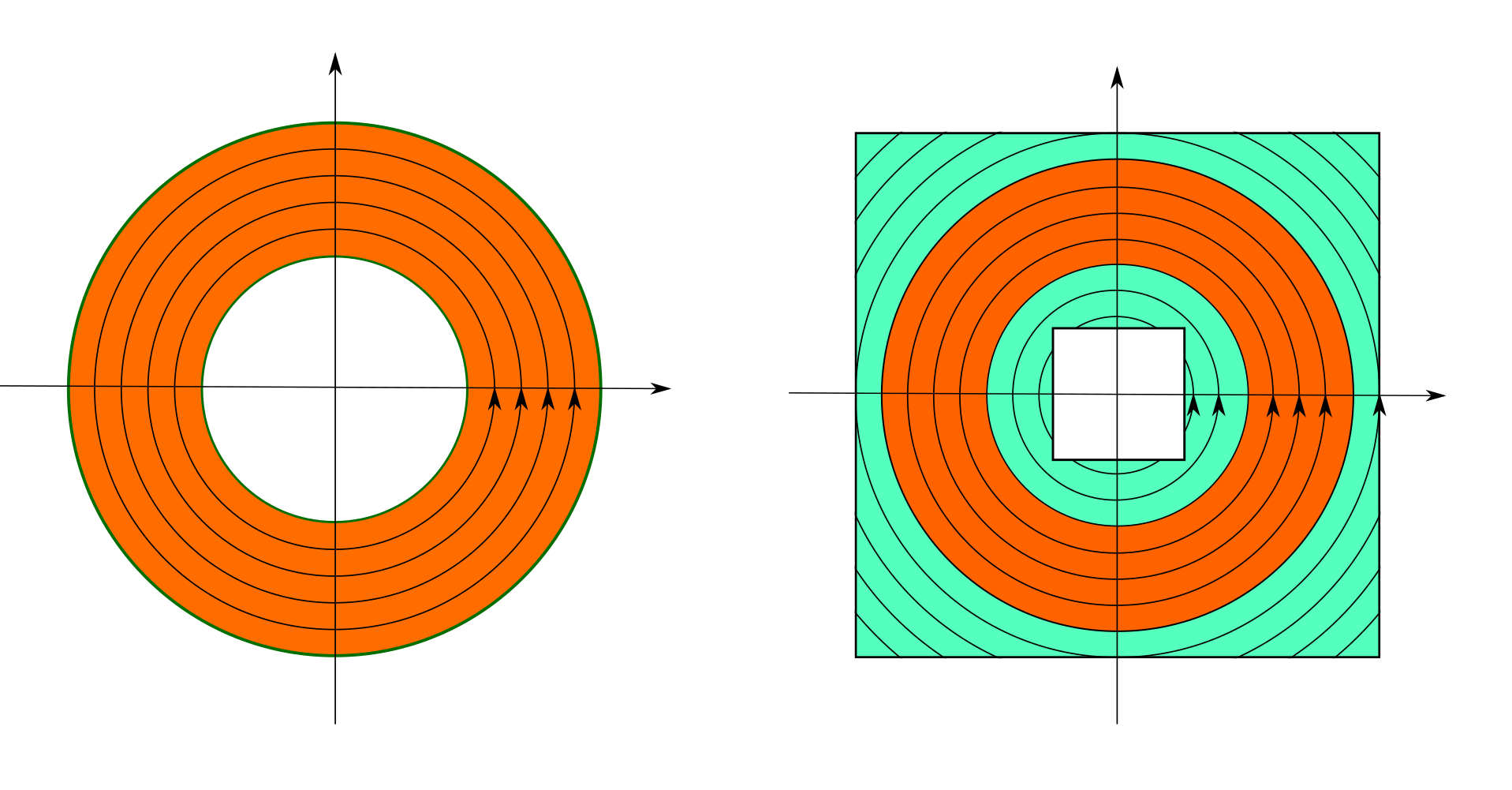}
\caption{Domains $\Omega_1$ (left) and $\Omega_2$ with characteristic curves and $\supp u$ in orange. \label{Im:2D_domains}}
\end{figure}

Even with a polyhedral domain with polygonal boundary and $\Gamma_0 \subsetneq \partial \Omega$ the problem may not be well-posed: Consider to that end
\begin{equation*}
\Omega_2 = (-1,1)^2 \setminus (-0.25, 0.25)^2
\end{equation*}
(see Figure \ref{Im:2D_domains}, right) with $\vec{b}$ and $c$ as before. We thus have again $\vec{b} \neq 0$ on $\overline{\Omega}_2$. Then, let $0 \neq \psi \in C^1([0,1])$ with $\supp \psi \subset [0.5, 0.9]$ and consider
\begin{equation*}
u(x,y) = \psi(\sqrt{x^2 + y^2}).
\end{equation*}
With this definition, $u|_{\partial\Omega_2} = 0$, i.e., $u \in C^1_{\Gamma_+}(\Omega_2)$. The characteristic curves of $\vec{b}$ are circle-shaped of the form $\gamma(t) = r(\cos(\phi + t), \sin(\phi + t))$ for a starting point $(x,y) = r(\cos \phi, \sin \phi)$. The rotational invariant function $u$ is thus constant on the characteristic curves, therefore it holds $\vec{b} \cdot \nabla u = 0$, and $B^*_\circ$ is again not injective.

\section{A (strong in time) space-time variational formulation}
\label{sec:strong_in_time}
As an alternative to our approach of an ultraweak variational form in space \emph{and} time described in Section \ref{Sec:2}, one could also take the point of view of using an ultraweak variational formulation \emph{in space only} and keep the first order derivative in time (i.e., not using integration by parts in time). Integrating over time then results in a space-time framework requiring more regularity in time (let us call it ``strong''). 
In order to fix notation, we first interpret the time-independent problem \eqref{eq:stationary} as an operator equation $A_\circ u := \vec{b}\cdot \nabla u +c\, u = f_\circ$ in some function space $V'$, where $V\hookrightarrow L_2(D)\hookrightarrow V'$ is a Gelfand triple (and $V$ is the $L_2$-dual of $V'$), i.e., $A_\circ:H\to V'$. Accordingly, $B_\circ u = \dot{u} + A_\circ u=f_\circ$ is seen as an equation pointwise in $V'$ for $t\in I$, which means we can multiply \eqref{eq:TE} with some smooth test function $C^0(\bar{I}; V)$ and integrate over time: 
\begin{align}\label{eq:def_b}
	b(w,v) 
	&:= \int_0^T \la \dot w(t) , v(t) \ra_{V' \times V} dt + \int_0^T a( w(t) , v(t)) dt \\
	&= f(v) := \int_0^T \la f_\circ(t),v(t) \ra_{V' \times V} dt. \nonumber
\end{align}
We get that $b:\mathcal{X} \times  \mathcal{Y}\to\R$ with the trial space
\[
	\mathcal{X} := \{ v \in L_2(I;H): \,\dot v \in L_2(I;V'),\, v(0)=0 \}
	= L_2(I;H) \cap H^1_{(0)}(I;V')
\]
and the test space $\mathcal{Y} := L_2(I;V)$. Here, $H^1_{(0)}(I;V') := \{ v \in H^1(I;V'):\, v(0)=0 \}$\footnote{Note, that $v(0)\in V'$, since $H^1(I;X) \hookrightarrow C(\bar I;X)$ for any normed linear space $X$.} equipped with the standard graph norm  $\|v\|_{\mathcal{X}} :=  ( \|v\|_{L_2(I;H)}^2 + \|\dot v\|_{L_2(I;V')}^2  )^{1/2}$ for $v \in \mathcal{X}$. Finally, the norm in $\mathcal{Y}$ is $\|\cdot\|_{\mathcal{Y}}\equiv \|\cdot\|_{L_2(I;V)}$. This means that $g$ can be chosen in $\mathcal{Y}' \cong L_2(I;V')$. 
This results in the variational formulation:
\begin{equation}
	\text{Find } u \in \mathcal{X}: \quad b(u,v) = f(v) \quad \forall \, v \in \mathcal{Y}.
\end{equation}
Since we do not perform integration by parts w.r.t.\ time here, we require $H^1$-regularity in time, which is the reason why we call this formulation \emph{strong in time}.

In order to determine the inf-sup constant of $b(\cdot,\cdot)$ w.r.t.\ the above pair $\cX$, $\cY$, we are going to consider the supremizer $s_u\in\cY$ for some given $0\not= u\in\cX$, which is the solution of the problem $(s_u,v)_\cY = b(u,v) = \langle \dot{u}+Au,v\rangle_{\cY'\times\cY} = (R_\cY^{-1}(\dot{u}+Au),v)_\cY$ for all $v\in\cY$, where $R_\cY:\cY\to\cY'$ is the Riesz operator of $\cY$ and $\langle\cdot,\cdot\rangle_{\cY'\times\cY}$ denotes the dual pairing of $\cY'$ and $\cY$. This means, $s_u = R_\cY^{-1}(\dot{u}+Au)$ and we obtain
\begin{align*}
	\bigg( \sup_{v\in\cY} \frac{b(u,v)}{\| v\|_\cY} \bigg)^2
	&= \| s_u\|_\cY^2
	= \| R_\cY^{-1}(\dot{u}+Au)\|_\cY^2
	= \| \dot{u}\|_{\cY'}^2 + \| Au\|_\cY^2 + 2\, (\dot{u}, Au)_{\cY'}.
\end{align*}
The first two terms can be estimated from above and from below by $\| u\|_\cX^2$, which is exactly what we need. For parabolic problems, the operator $A$ is symmetric and this was used in \cite{MR2891112,MR3194123} to express $( \dot{u}, Au)_{\cY'}$ in terms of the norm of the final time contribution $u(T)$. This is the key to derive optimal inf-sup and continuity constants for parabolic problems.

For transport problems, however, $A$ is \emph{not} symmetric. Defining the symmetric and anti-symmetric part of $A$ as usual, i.e., $A_{\text{sym}}:=\frac12(A+A^*)$, $A_{\text{asy}}:=\frac12(A-A^*)$ we get $A=A_{\text{sym}}+A_{\text{asy}}$, $A_{\text{sym}}^*=A_{\text{sym}}$, $A_{\text{asy}}^*=-A_{\text{asy}}$. We obtain by $u(0)=0$ and the fundamental theorem of calculus 
$	2\, (\dot{u}, Au)_{\cY'}
	= 2\, (\dot{u}, A_{\text{sym}} u)_{\cY'}
		+ 2\,  (\dot{u}, A_{\text{asy}} u)_{\cY'}
	= \| A_{\text{sym}}^{1/2} u(T)\|_{V'}^2
		+ 2\,  (\dot{u}, A_{\text{asy}} u)_{\cY'}
$.
Of course $\| A_{\text{sym}}^{1/2} u(T)\|_{V'}\ge 0$, so that this contribution is no problem. The second part, however, may very well be negative since  $\frac{d}{dt} ({u}(t), A_{\text{asy}} u(t))_{V'}=0$. 
Using H\"older-type estimates, it is not difficult to show the estimate 
$|2\,  (\dot{u}, A_{\text{asy}} u)_{\cY'} | \le T\, \| A_{\text{asy}}\| \sqrt{2} \| \dot{u}\|_{V'}^2$, 
which results in 
$$
	\inf_{u\in\cX} \sup_{v\in\cY}
	\frac{b(u,v)}{\|u\|_\cX\, \| v\|_\cY}
	\ge \min\bigg\{ \beta_a, \sqrt{ 1-T\, \sqrt{2}\| A_{\text{asy}}\|}\bigg\},
$$
where $\beta_a$ denotes the inf-sup constant of the spatial operator $A$. Obviously, this estimate is only meaningful for small final times $T$.

\section{Proof of Lemma \ref{lemma:compactness solution set}}
\label{proof:compactness solution set}
\begin{proof}[Proof of Lemma \ref{lemma:compactness solution set}]
Let $u_{n}(\mu_{n})$ form a sequence in $\mathcal{M}$. Thanks to \eqref{eq:stability of solution}, \eqref{eq:affine decomposition}, and the assumption that $\theta^{q}_{f} \in C^{0}(\bar{\mathcal{P}})$, $q=1,\dots,Q_{f}$, there exists a subsequence $u_{n_{k}}(\mu_{n_{k}}) \in \mathcal{M}$ that converges weakly in $L_{2}(\Omega)$ to a limit $\tilde{u} \in L_{2}(\Omega)$. To infer compactness of $\mathcal{M}$, it thus remains to show that $\tilde{u} \in \mathcal{M}$. To that end, we employ the parameter values $\mu_{n_{k}}$ of the weakly converging subsequence $u_{n_{k}}(\mu_{n_{k}})$ to define a sequence $(\mu_{n_{k}})_{k}$ in $\mathcal{P}$. Thanks to the compactness of $\mathcal{P}$ this sequence has a weakly converging subsequence which we denote w.l.o.g.\ again by $(\mu_{n_{k}})_{k}$ that converges to a limit $\bar{\mu} \in \mathcal{P}$. 

To show continuity of the mappings $\mu \mapsto B_{\mu}^{*}$ and $\mu \mapsto f_{\mu}$, we first note that we have for all $\mu \in \mathcal{P}$ and all $v \in \bar{\mathcal{Y}}$ that
\begin{equation*}
\| B_{\mu}^{*}v \|_{L_{2}(\Omega)} = \|v\|_{\mathcal{Y}_{\mu}} \leq \| v\|_{\bar{\mathcal{Y}}} \quad \text{and} \quad 
\sup_{v \in \bar{\mathcal{Y}}} \frac{| f_{\mu}(v)|}{\| v\|_{\bar{\mathcal{Y}}}} \leq \sup_{v \in \mathcal{Y}_{\mu}} \frac{| f_{\mu}(v) |}{\|v\|_{\mathcal{Y}_{\mu}}} = \|f\|_{\mathcal{Y_{\mu}}'}
\end{equation*}
and thus $B_{\mu}^{*} \in L(\bar{\mathcal{Y}},L_{2}(\Omega))$ and $f_{\mu} \in \bar{\mathcal{Y}}'$. Thanks to the assumption that $B_{\mu}^{*}$ and $f_{\mu}$ are affine w.r.t.\ parameter we may thus infer as in \cite{DaPlWe14} that for all $\mu_{1}, \mu_{2} \in \mathcal{P}$ and all $v \in \bar{\mathcal{Y}}$ we have
\begin{align*}
	\| (B_{\mu_{1}}^{*} - B_{\mu_{2}}^{*})v\|_{L_{2}(\Omega)} &\leq C_{B} \max_{q=1,\dots, Q_{b}} | \theta^{q}_{b}(\mu_{1}) - \theta^{q}_{b}(\mu_{2}) | \enspace \|v \|_{\bar{\mathcal{Y}}}, \\
| f_{\mu_{1}}(v) - f_{\mu_{2}}(v) | &\leq C_{f} \max_{q=1,\dots, Q_{f}} | \theta^{q}_{f}(\mu_{1}) - \theta^{q}_{f}(\mu_{2}) | \enspace \|v \|_{\bar{\mathcal{Y}}},
\end{align*}
which yields the continuity of the mappings $\mathcal{P} \rightarrow L(\bar{\mathcal{Y}},L_{2}(\Omega))$, $\mu \mapsto B_{\mu}^{*}$ and $\mathcal{P} \rightarrow \bar{\mathcal{Y}}'$, $f_{\mu} \in \bar{\mathcal{Y}}'$. As a consequence we have that for all $v \in \bar{\mathcal{Y}}$ the sequences $(B_{\mu_{n_{k}}}^{*}v) \in L_{2}(\Omega)$ and $f_{\mu_{n_{k}}}(v) \in \mathbb{R}$ converge in the following sense
\begin{equation}\label{eq:strong convergence}
\| (B_{\mu_{n_{k}}}^{*} - B_{\bar{\mu}})v\|_{L_{2}(\Omega)} \rightarrow 0 \quad \text{and} \quad |f_{\mu_{n_{k}}}(v) - f_{\bar{\mu}}(v) | \rightarrow 0 \quad \text{for} \enspace \mu_{n_{k}} \rightarrow \bar{\mu}.
\end{equation}
In particular, the sequence $(B_{\mu_{n_{k}}}^{*}v)$ hence converges strongly to $B_{\bar{\mu}}v$ in $L_{2}(\Omega)$. 

We may thus infer that we have for all $v \in \bar{\mathcal{Y}}$ that
\begin{align*}
	&(u_{n_{k}}(\mu_{n_{k}}), B^{*}_{\mu_{n_{k}}}v)_{L_{2}(\Omega)} - f_{\mu_{n_{k}}}(v) \longrightarrow (\tilde{u}, B^{*}_{\bar{\mu}}v)_{L_{2}(\Omega)} - f_{\bar{\mu}}(v)
\end{align*}
and as a consequence $(\tilde{u}, B^{*}_{\bar{\mu}}v)_{L_{2}(\Omega)} = f_{\bar{\mu}}(v)$ for all $v \in \bar{\mathcal{Y}}$. 
To conclude, it remains to prove that there holds
$(\tilde{u}, B^{*}_{\bar{\mu}}v)_{L_{2}(\Omega)} = f_{\bar{\mu}}(v)$ for all $v \in \mathcal{Y}_{\bar{\mu}}$. 
To that end, consider an arbitrary function $v \in \mathcal{Y}_{\bar{\mu}}$. As $\bar{\mathcal{Y}}$ is dense in $\mathcal{Y}_{\bar{\mu}}$, there exists a sequence $v_{n}$ such that $\| v_{n} - v \|_{\mathcal{Y}_{\bar{\mu}}} \rightarrow 0$. Then, we have
\begin{align*}
(\tilde{u}, B^{*}_{\bar{\mu}}v)_{L_{2}(\Omega)} - f_{\bar{\mu}}(v) &= (\tilde{u}, B^{*}_{\bar{\mu}}(v - v_{n}))_{L_{2}(\Omega)} - f_{\bar{\mu}}(v - v_{n}) \\
&\leq \|\tilde{u}\|_{L_{2}(\Omega)} \|B_{\bar{\mu}}^{*}\|_{L(\mathcal{Y}_{\bar{\mu}},L_{2}(\Omega))} \|v-v_{n}\|_{\mathcal{Y}_{\bar{\mu}}} + \|f_{\bar{\mu}}\|_{\mathcal{Y}_{\bar{\mu}}'} \|v-v_{n}\|_{\mathcal{Y}_{\bar{\mu}}}\\ &\longrightarrow 0.
\end{align*} 
We may thus infer that $\tilde{u} = u_{\bar{\mu}} \in \mathcal{M}$, which was to be proven.  
\end{proof}

\end{document}